\newcommand{\stfD}{\mathcal{E}}
\definecolor{HsL}{RGB}{255,0,255}
\definecolor{HtL}{RGB}{90,13,67}
\crefname{hypothesis}{Hypothesis}{Hypotheses}
\crefname{fact}{Fact}{Facts}
\title{
Stability and Convergence of Mixed Finite Elements for Linear Regularized 13-Moment Equations
\thanks{
Shuang Hu and Huiteng Li are co-first authors with equal contributions.
\funding{The work of Zhenning Cai was supported by the Academic Research Fund of the
Ministry of Education of Singapore under Grant No. A-8002392-00-00.}
}
}
\author{
Shuang Hu\thanks{
Zhejiang University, 866 Yuhangtang Road, Hangzhou, Zhejiang Province, 310058 China
(\email{hsmathna@gmail.com}).}
\and Huiteng Li\thanks{
King Abdullah University of Science and Technology (KAUST), Thuwal, 23955 Saudi Arabia
(\email{huiteng.li@kaust.edu.sa}).}
\and Zhenning Cai\thanks{ Corresponding author. 
Department of Mathematics, National University of Singapore,
Level 4, Block S17, 10 Lower Kent Ridge Road, Singapore 119076  (\email{matcz@nus.edu.sg}).}
}
\begin{document}

\maketitle

\begin{abstract}
We present a stable and convergent mixed finite element method (MFEM) for the linear regularized 13-moment (R13) equations in rarefied gas dynamics. 
Unlike existing methods that require stabilization via penalty terms, our scheme achieves inherent stability by enriching the finite element basis with bubble functions. 
We provide a rigorous theoretical analysis, establishing second-order convergence rates in the $L^2$ norm under mild regularity assumptions. 
Beyond theoretical properties, our scheme demonstrates practical advantages over standard MFEM schemes, yielding robust numerical results even in the presence of geometric singularities.
\end{abstract}

\begin{keywords}
Regularized 13-moment equations, rarefied gas dynamics, mixed finite element method, discrete inf-sup condition, Korn-type inequalities.
\end{keywords}

\begin{MSCcodes}
65N30, 65N12, 76P05.
\end{MSCcodes}

\section{Introduction}
\label{sec:introduction}
In gas kinetic theory, moment methods have been playing important roles in the modeling and simulation of non-equilibrium flows. Since the moment method
was first introduced by Grad \cite{grad1949kinetic} in 1949, numerous moment models have been
proposed to improve the accuracy and stability of equations \cite{Levermore1996moment, Myong1999thermo, McDonald2013affordable, Struchtrup2022thermo}, among
which the regularized 13-moment (R13) equations, introduced in \cite{struchtrup2005macroscopic,struchtrup2003regularization}, have been attracting much attention in the past two decades due to their
good performance in various benchmark tests \cite{Taheri2012macroscopic, Rana2015numerical, Timokhin2017different, DeFraja2022efficient}. While the original R13 equations were devised for Maxwell molecules, they have been generalized to
other intermolecular potentials \cite{Cai2020regularized}, and in all cases, the resulting
equations enjoy both linear stability and super-Burnett order of accuracy.
Recently, stable boundary conditions have been formulated for these equations, further enhancing their potential application in practical boundary value problems
\cite{Sarna2018stable, theisen2021fenics}.
Numerical experiments indicate that R13 equations allow simulations of rarefied gas dynamics
with Knudsen number $\text{Kn} \sim 0.1$ without directly solving the
Boltzmann equation \cite{rana2013robust}.

While many moment models are gradually losing their attractiveness due to either robustness or accuracy issues, R13 equations stand as one of the models that are still being widely studied both theoretically and numerically.
One obstacle to promoting the applications of R13 equations is the large number of variables in the system, more than twice as many as Euler/Navier-Stokes-Fourier equations.
To address this issue, an easy-to-implement numerical method may be helpful.
Recently, researchers have been studying linearized R13 equations, which are accurate in the slow-flow regime, and have established several breakthroughs.
For instance, the method of fundamental solutions has been introduced in \cite{Claydon2017fundamental} and further developed in \cite{Himanshi2025generalized}.
Also, significant progress has been made on the finite element method (FEM), a field to which our work will also contribute.

Lately, the well-posedness of the steady-state linearized R13 equations for Maxwell molecules has been rigorously justified, marking another milestone in
the theoretical study of moment equations \cite{lewintan2025wellposedness} and providing a solid basis for the study of the finite element method. The weak form of R13 equations appears as a saddle-point problem, resembling the classical Stokes equations, for which the well-posedness can be established using Brezzi's theorem \cite{brezzi1974existence}. With such a structure, a natural choice of its numerical solver is the
mixed FEM. However, since the 2010s, the development of FEM
for R13 equations has undergone a bumpy road, accompanied by the formulation of
its boundary conditions. Early attempts were documented in some master theses \cite{Muller2010finite, westerkamp2012finite}, where the 13-moment equations were
separated into a stress system and a heat flux system, which were solved
independently for the purpose of a closer look at possible issues. Notably,
when the subsystems were solved using the standard Taylor-Hood elements,
oscillations were observed in velocity and temperature profiles near the
curved boundary, causing significant $L^{\infty}$ errors in the numerical
solutions. The work \cite{westerkamp2012finite} managed to
suppress the oscillation using isoparametric elements with higher-order
polynomials, and later in \cite{westerkamp2017curvature}, the
continuous interior penalty (CIP) method was adopted to obtain smooth numerical
solutions for Taylor-Hood elements. Both works \cite{westerkamp2012finite,westerkamp2017curvature}
solved only a subsystem of the R13 equations and did not report any results on the
full system.

Note that the boundary conditions used in these works were based on the
formulation introduced in \cite{torrilhon2008boundary}, which did not provide a symmetric
weak form. In \cite{rana2016thermodynamically}, thermodynamically admissible boundary conditions were formulated for R13 equations, which completely changed the landscape of FEM for R13 equations. Using the new boundary conditions, the variational form of the R13 equations was
first presented with all boundary conditions inserted in a symmetric way \cite{westerkamp2019finite}, and FEM solutions without spurious
oscillations on the curved boundary were finally obtained.

CIP stabilization, applied in all the successful FEM implementations mentioned
above, requires a few extra parameters to be tuned, which may be
problem-dependent. Can we have a solver without extra parameters? In \cite{theisen2021fenics}, with a further adjustment of the boundary conditions, desired
convergence rates were eventually achieved using $\mathbb{P}_2$-$\mathbb{P}_1$ mixed elements
without CIP stabilization. However, as we will show in \cref{sec:numerical_results}, this approach may still lead to instability in some cases. The work \cite{theisen2021fenics} 
introduces a software package known as \texttt{fenicsR13} \cite{fenicsR13repo}, which provides a large number of test cases in both 2D and 3D cases.

It has been a long journey since the first attempts of FEM for R13 equations, a
seemingly simple problem that turns out to be quite challenging. But the
question of a robust and parameter-free FEM solver for R13 equations is still
open. In this work, we would like to tackle this problem and continue the
quest. Here we summarize our contributions as follows:
\begin{itemize}
  \item We construct a stable, conforming mixed finite element method for the R13 system. Unlike existing approaches, our scheme does not require any artificial stabilization or penalty parameters.
  \item We establish the well-posedness of the discrete problem by proving the discrete inf-sup condition and deriving the a priori error estimates.
  \item We verify the validity of our scheme through various benchmarks. In particular, the scheme exhibits superior stability in problems lacking regularity, where standard schemes (e.g., Taylor-Hood) tend to fail.
\end{itemize}

While our solution is not the most computationally economical choice due to the enriched space, the overhead is relatively minor in 2D. 
More importantly, this cost is compensated for by the method's theoretical reliability. 
In contrast to CIP stabilization, 
which lacks a rigorous convergence proof, we establish the well-posedness of our discrete problem and demonstrate that the scheme achieves second-order convergence in the $L^2$ norm.

The rest of the paper is organized as follows. 
Section \ref{sec:preliminaries} introduces the necessary notation and the weak formulation of the R13 system. In Section \ref{sec:mfem}, we construct the mixed finite element scheme and define the associated finite element spaces. Section \ref{sec:stability_and_convergence} is devoted to the theoretical analysis, establishing the discrete stability and convergence of the proposed method. Numerical experiments validating these results are reported in Section \ref{sec:numerical_results}. Finally, Section \ref{sec:conclusions} summarizes the work and discusses future research directions.

\section{Preliminaries}
\label{sec:preliminaries}
\subsection{Notation}
Let $\Omega \subseteq \mathbb{R}^d$ ($d = 2,3$) be a bounded polygonal/polyhedral domain with boundary $\Gamma$.
For any subdomain $D \subseteq \Omega$ and integer $m \ge 0$, 
let $H^m(D)$ 
denote the set of all $L^2(D)$ functions 
whose weak derivatives up to order $m$ are also square-integrable.
The spaces for vector-valued or tensor-valued functions 
are given by $H^m(D; \mathbb{X}) \coloneqq H^m(D) \otimes \mathbb{X}$,
with the norm $\|\cdot\|_{m,D}$ and semi-norm $|\cdot|_{m,D}$. 
The case $m=0$ corresponds to the space of square-integrable functions,  
and we use $(\cdot,\cdot)_D$ to denote the standard $L^2$-inner product on the domain $D$ on $L^2(D)$ or $L^2(D;{\mathbb X})$. 
Denote by $H^m_0(D;{\mathbb X})$ the closure of $C^\infty_0(D;{\mathbb X})$ with respect to the norm $\|\cdot\|_{m,D}$.
Denote by $H^{-1}(D;\mathbb X)$ the dual of $H^1_0(D;\mathbb X)$ with the induced norm $\|\cdot\|_{-1,D}$.
When $D=\Omega$, the subscripts for norms, semi-norms and $L^2$-inner products are abbreviated as 
$\|\cdot\|_m$, $|\cdot|_m$ and $(\cdot,\cdot)$, respectively. 
We denote by $L^2_0$ and $\tilde{H}^1$ 
the subspaces of $L^2$ and $H^1$ 
containing functions with zero mean, 
respectively.
Throughout this paper, $A \lesssim B$ (resp., $A \gtrsim B$) denotes $A \leq CB$ (resp., $A \geq CB$) for a generic constant $C>0$ independent of the mesh size. We write $A \simeq B$ if $A \lesssim B \lesssim A$.

For the R13 system, the tensor space $\mathbb{T}$ is defined according to the dimension $d$:
\begin{equation}
\mathbb{T} \coloneqq \begin{cases}
    \mathbb{R}^{2\times2}_{\rm sym}, & d=2, \\
    \mathbb{R}^{3\times 3}_{\mathrm{stf}}, & d=3,
\end{cases}
\end{equation}
where $\mathbb{R}^{d\times d}_{\rm sym}$ and $\mathbb{R}^{d \times d}_{\mathrm{stf}}$ denote the spaces of symmetric and symmetric trace-free matrices, 
respectively. 
We define the associated projection operators $\mathrm{sym}: \mathbb{R}^{d\times d} \to \mathbb{R}^{d\times d}_{\rm sym}$ and $\mathrm{stf}: \mathbb{R}^{d\times d} \to \mathbb{R}^{d\times d}_{\rm stf}$ as:
\begin{align} 
\mathrm{sym} \, \boldsymbol{\sigma} \coloneqq \tfrac{1}{2}(\boldsymbol{\sigma} + \boldsymbol{\sigma}^T), 
\quad
\mathrm{stf} \, \boldsymbol{\sigma} \coloneqq \mathrm{sym} \, \boldsymbol{\sigma} - \tfrac{1}{d} (\mathrm{tr} \boldsymbol{\sigma}) \boldsymbol{I}_d. 
\end{align}



The definition of the symmetric trace-free projection extends to third-order tensors ($d=3$) through the operator $\mathrm{Stf}: \mathbb{R}^{3\times 3\times 3} \to \mathbb{R}_{\mathrm{stf}}^{3\times 3\times 3}$. 
For any $\boldsymbol{m} \in \mathbb R^{3\times3\times3}$,
the components of $\mathrm{Stf} \, \boldsymbol{m}$ are given by:
\begin{equation}
\label{eq:Stf}
\begin{aligned}
(\mathrm{Stf}\,\boldsymbol{m})_{ijk} 
&\coloneqq 
m_{(ijk)} - \frac{1}{5} \sum_{l=1}^{3}\left( m_{(ill)}\delta_{jk} + m_{(ljl)}\delta_{ik} + m_{(llk)}\delta_{ij}\right),
\end{aligned}
\end{equation}
where the permutation sum $m_{(ijk)}$ is defined by  
$
m_{(ijk)} := \tfrac{1}{6}( m_{ijk} + m_{jki} + m_{kij} + m_{jik} + m_{ikj} + m_{kji} )
$
and $\delta_{ij}$ is the Kronecker delta.

Finally, for brevity we define the dimension-dependent differential operator $\stfD$ as
\begin{equation}
    \stfD \coloneqq \begin{cases}
    {\rm sym}~\nabla, & d=2, \\
    {\rm stf}~\nabla, & d=3.
\end{cases}
\end{equation}

\subsection{Regularized 13-moment system}
The \textit{regularized 13-moment} (R13) system governs the evolution of the following fields: the pressure $p:\Omega\rightarrow\mathbb{R}$, the velocity $\boldsymbol{u}:\Omega\rightarrow\mathbb{R}^3$, the temperature $\theta:\Omega\rightarrow\mathbb{R}$, the heat flux $\boldsymbol{s}:\Omega\rightarrow\mathbb{R}^3$, and the symmetric trace-free stress tensor $\boldsymbol{\sigma}:\Omega\rightarrow\mathbb{R}_{\text{stf}}^{3\times3}$. Since $\boldsymbol{\sigma}$ is symmetric and trace-free, it possesses 5 degrees of freedom, resulting in a total of 13 unknowns for the system.

The R13 system consists of the \textit{conservation laws} for mass, momentum, and energy, and the \textit{evolution equations} for the higher-order moments. The conservation laws are given by:
\begin{equation}
\label{eq:conservation_part}
\begin{cases}
-\nabla\cdot\boldsymbol{u}=0,\\
\nabla p+\nabla\cdot\boldsymbol{\sigma}={0},\\
\nabla\cdot\boldsymbol{u}+\nabla\cdot\boldsymbol{s}=0,
\end{cases}
\quad\text{in }\Omega,
\end{equation}
and the evolution equations describing the stress and heat flux are:
\begin{equation}
\label{eq:evolution_part}
\begin{cases}
\frac{4}{5}\mathrm{stf}~\nabla\boldsymbol{s}+2\mathrm{stf}~\nabla\boldsymbol{u}-2\mathrm{Kn}\nabla\cdot(\mathrm{Stf}~\nabla\boldsymbol{\sigma})=-\frac{1}{\mathrm{Kn}}\boldsymbol{\sigma},\\
\frac{5}{2}\nabla\theta+\nabla\cdot\boldsymbol{\sigma}-\frac{12}{5}\mathrm{Kn}\nabla\cdot(\mathrm{stf}~\nabla\boldsymbol{s})-2\mathrm{Kn}\nabla\cdot\boldsymbol{s}=-\frac{1}{\mathrm{Kn}}\frac{2}{3}\boldsymbol{s},
\end{cases}
\quad\text{in }\Omega.
\end{equation}
Here, $\mathrm{Kn}>0$ denotes the Knudsen number.

Let $(\boldsymbol{t}_1,\boldsymbol{t}_2,\boldsymbol{n})$ be a boundary-aligned orthonormal frame with outer unit normal $\boldsymbol{n}$. 
For any $\boldsymbol{a}, \boldsymbol{b}, \boldsymbol{c} \in \{\boldsymbol{t}_1, \boldsymbol{t}_2, \boldsymbol{n}\}$, 
we define the projected components as $u_a\coloneqq\boldsymbol{u}\cdot\boldsymbol{a}$, $\sigma_{ab}\coloneqq \sum_{i,j}\sigma_{ij}a_i b_j$, 
and $m_{abc}\coloneqq\sum_{i,j,k}m_{ijk}a_i b_j c_k$.
By enforcing impermeability ($u_n=0$) consistent with Maxwell's solid-wall model, the \textit{Onsager boundary conditions} are formulated via 
higher-order moments $\bf m$, $\bf R$ and $\triangle$:
\begin{equation}
\label{eq:Onsager_BC}
\begin{cases}
u_n=0, \\
\sigma_{nt_i} = \tilde{\chi}\left(\left(u_{t_i} - u_{t_i}^W\right) + \frac{1}{5}s_{t_i} + m_{nnt_i}\right) \quad (i \in \{1,2\}), \\
R_{nt_i} = \tilde{\chi}\left(-\left(u_{t_i} - u_{t_i}^W\right) + \frac{11}{5}s_{t_i} - m_{nnt_i}\right) \quad (i \in \{1,2\}), \\
s_n = \tilde{\chi}\left(2\left(\theta - \theta^W\right) + \frac{1}{2}\sigma_{nn} + \frac{2}{5}R_{nn} + \frac{2}{15}\triangle\right), \\
m_{nnn} = \tilde{\chi}\left(-\frac{2}{5}\left(\theta - \theta^W\right) + \frac{7}{5}\sigma_{nn} - \frac{2}{25}R_{nn} - \frac{2}{75}\triangle\right), \\
\left(\frac{1}{2}m_{nnn} + m_{nt_1t_1}\right)= \tilde{\chi}\left(\frac{1}{2}\sigma_{nn} + \sigma_{t_1t_1}\right), \\
m_{nt_1t_2} = \tilde{\chi}\sigma_{nt_1t_2}.
\end{cases}
\quad \text{on } \Gamma,
\end{equation}
Here, 
$\boldsymbol{u}_{t_i}^{W}$ and $\theta^W$ denote the wall tangential velocity and wall temperature,
respectively, and $\tilde{\chi}>0$ represents the modified Maxwell accommodation factor as introduced in \cite{westerkamp2019finite}.
The closures for higher-order moments $\bf m$,
$\bf R$ and $\triangle$ in \eqref{eq:Onsager_BC} are given by
\begin{equation}
\label{eq:closure}  
\mathbf{m}
\coloneqq
-2\mathrm{Kn}\,\mathrm{Stf}\,\nabla\boldsymbol{\sigma},
\quad
\mathbf{R}
\coloneqq
-\frac{24}{5}\mathrm{Kn}\,\mathrm{stf}\,\nabla\boldsymbol{s},
\quad
\triangle\coloneqq
-12\mathrm{Kn}\nabla\cdot\boldsymbol{s}.
\end{equation}
 
Together, \cref{eq:conservation_part,eq:evolution_part,eq:Onsager_BC,eq:closure} form the complete R13 boundary value problem.

\subsection{Weak Form}
We introduce the following function spaces for the weak form:
\begin{equation}
     \label{eq:continuous_spaces}
\begin{aligned}
    &\mathrm{T}_{\boldsymbol{\sigma}} \coloneqq H^1(\Omega;\mathbb{T}),\quad \mathrm{T}_{\boldsymbol{s}} \coloneqq H^1(\Omega;\mathbb{R}^d),\quad \mathrm{T}_{p} \coloneqq \tilde{H}^1(\Omega),\\ 
    &\mathrm{W}_{\boldsymbol{u}} \coloneqq L^2(\Omega;\mathbb{R}^d),\quad \mathrm{W}_{\theta} \coloneqq L^2(\Omega).
\end{aligned}
\end{equation}

For the sake of analysis, denote by $\mathrm{T}$ and $\mathrm{W}$ composite product Hilbert spaces:
\begin{equation}
    \mathrm{T} \coloneqq \mathrm{T}_{\boldsymbol{\sigma}} \times \mathrm{T}_{\boldsymbol{s}} \times \mathrm{T}_{p}, \quad
    \mathrm{W} \coloneqq \mathrm{W}_{\boldsymbol{u}} \times \mathrm{W}_{\theta},
\end{equation}
together with their canonical norms \({\|\cdot\|}_\mathrm{T}\) and \({\|\cdot\|}_\mathrm{W}\).

Let $(\boldsymbol{\tau}, \boldsymbol{r}, q, \boldsymbol{v}, \gamma)$ be the test functions associated with the unknowns $(\boldsymbol{\sigma},\boldsymbol{s},p,\boldsymbol{u},\theta)$. 
We group these into the following tuples:
\begin{equation}
\label{eq:grouping}
\begin{aligned}
\text{Unknowns:}\quad
\boldsymbol{\mathcal{S}} \coloneqq (\boldsymbol{\sigma}, \boldsymbol{s}, p) \in \mathrm{T}, 
\quad \boldsymbol{\mathcal{U}} \coloneqq (\boldsymbol{u}, \theta) \in \mathrm{W}, \\
\text{Test Functions:}\quad
\boldsymbol{\mathcal{R}}
\coloneqq (\boldsymbol{\tau},
\boldsymbol{r}, q) \in \mathrm{T}, \quad \boldsymbol{\mathcal{V}} \coloneqq (\boldsymbol{v}, \gamma) \in \mathrm{W}.
\end{aligned}
\end{equation}

The weak formulation is stated as follows: given \(\mathcal{F} \in \mathrm{T}'\), find \(\boldsymbol{\mathcal{S}} \in \mathrm{T},\,\, \boldsymbol{\mathcal{U}} \in \mathrm{W}\) such that
\begin{equation}
\begin{aligned}
  \mathcal{A}(\boldsymbol{\mathcal{S}},\boldsymbol{\mathcal{R}})+ \mathcal{B}(\boldsymbol{\mathcal{R}},\boldsymbol{\mathcal{U}}) &= \mathcal{F}(\boldsymbol{\mathcal{R}}) \quad &\forall\, \boldsymbol{\mathcal{R}} &\in \mathrm{T},
  \\
  \mathcal{B}(\boldsymbol{\mathcal{S}},\boldsymbol{\mathcal{V}}) &= 0 \quad &\forall\, \boldsymbol{\mathcal{V}} &\in \mathrm{W}.
\end{aligned}   
\end{equation}
Here, the block operators are defined in terms of their constituent forms:
\begin{equation}
\label{eqs:def_block_operators}
\begin{cases}
\mathcal{A}(\boldsymbol{\mathcal{S}},\boldsymbol{\mathcal{R}})
\coloneqq
a(\boldsymbol{s},\boldsymbol{r})
+
c(\boldsymbol{s},\boldsymbol{\tau})
-
c(\boldsymbol{r},\boldsymbol{\sigma})
+
d(\boldsymbol{\sigma},\boldsymbol{\tau})
,
\\
\mathcal{B}(\boldsymbol{\mathcal{S}},\boldsymbol{\mathcal{V}})
\coloneqq
-
b(\gamma, \boldsymbol{s})
-
e(\boldsymbol{v},\boldsymbol{\sigma})
-
g(p,\boldsymbol{v})
,
\quad
\mathcal{F}(\boldsymbol{\mathcal{R}})
\coloneqq l_1(\boldsymbol{r})+l_2(\boldsymbol{\tau}).    
\end{cases}
\end{equation}
The linear functionals, arising from Onsager boundary conditions, are given by
\begin{equation}
\begin{cases}   
l_{1}(\boldsymbol{r}) \coloneqq - \int_{\Gamma} \theta^{\mathrm{w}} r_{n} \mathrm{d}l, \\
l_{2}(\boldsymbol{\tau}) \coloneqq - \int_{\Gamma} \sum_{i=1}^2 u_{t_i}^{\mathrm{w}} \tau_{n t_i}  \mathrm{d}l,
\end{cases}
\end{equation}
and explicit expressions for the bilinear forms 
are: 
\begin{equation}
\label{eqs:def_bilinear_forms}
\begin{cases}
a(\boldsymbol{s}, \boldsymbol{r}) 
\coloneqq \frac{24}{25} \operatorname{Kn} (\operatorname{sym} \nabla \boldsymbol{s} ,\operatorname{sym} \nabla\boldsymbol{r}) + \frac{12}{25} \operatorname{Kn} (\nabla\cdot\boldsymbol{s},\nabla\cdot\boldsymbol{r}) 
+ \frac{4}{15} \frac{1}{\operatorname{Kn}} (\boldsymbol{s} , \boldsymbol{r})\\
\qquad\qquad
+ \frac{1}{2} \frac{1}{\tilde{\chi}} \int_{\Gamma} s_{n} r_{n} \, \mathrm{d} l 
+ \frac{12}{25} \tilde{\chi} \sum_{i=1}^{2}\int_{\Gamma} s_{t_i} r_{t_i} \, \mathrm{d} l, \\
c(\boldsymbol{r}, \boldsymbol{\sigma}) 
\coloneqq \frac{2}{5} ({\boldsymbol{\sigma}} , \nabla\boldsymbol{r}) - \frac{3}{20} \int_{\Gamma} \sigma_{nn} r_{n} \, \mathrm{d} l - \frac{1}{5} \sum_{i=1}^{2}\int_{\Gamma} \sigma_{n t_i} r_{t_i} \, \mathrm{d} l, \\
d(\boldsymbol{\sigma}, \boldsymbol{\tau}) 
\coloneqq \operatorname{Kn} \left(\operatorname{Stf} \nabla{\boldsymbol{\sigma}} , \operatorname{Stf} \nabla{\boldsymbol{\tau}}\right)+ \frac{1}{2} \frac{1}{\operatorname{Kn}} ({\boldsymbol{\sigma}} ,
{\boldsymbol{\tau}})+ \frac{9}{8}\tilde{\chi} \int_{\Gamma} \sigma_{nn} \tau_{nn}
\mathrm{d} l \\
\qquad\qquad
+ \tilde{\chi} \int_{\Gamma} \left( \sigma_{t_1 t_1} + \frac{1}{2} \sigma_{nn} \right) \left( \tau_{t_1 t_1} + \frac{1}{2} \tau_{nn} \right) \, \mathrm{d} l+ \tilde{\chi} \int_{\Gamma} \sigma_{t_1 t_2} \tau_{t_1 t_2} \, \mathrm{d} l\\
\qquad\qquad
+ \frac{1}{\tilde{\chi}} \sum_{i=1}^2 \int_{\Gamma} \sigma_{n t_i} \tau_{n t_i} \, \mathrm{d} l,\\
b(\theta, \boldsymbol{r}) 
\coloneqq (\theta,\nabla\cdot \boldsymbol{r}),
\quad
e(\boldsymbol{u}, 
\boldsymbol{\tau}) 
\coloneqq\big(\nabla\cdot\boldsymbol{\tau}, \boldsymbol{u}\big),
\quad 
g(p, \boldsymbol{v}) 
\coloneqq\big(\boldsymbol{v}, \nabla p\big).
\end{cases}
\end{equation}

For $d=2$, invoking a symmetry argument allows the function spaces to be simplified via zero-extension of vectors and the natural embedding of tensors:
\begin{equation}
    \label{eq:auxiliary_tensors}
    \boldsymbol{\sigma}\mapsto
    \tilde{\boldsymbol{\sigma}}\coloneqq\left[\begin{smallmatrix}
        \sigma_{11}&\sigma_{12}&0\\
        \sigma_{12}&\sigma_{22}&0\\
        0&0&-(\sigma_{11}+\sigma_{22})\\
    \end{smallmatrix}\right],\,
    \boldsymbol{\tau}\mapsto
    \tilde{\boldsymbol{\tau}}\coloneqq\left[\begin{smallmatrix}
        \tau_{11}&\tau_{12}&0\\
        \tau_{12}&\tau_{22}&0\\
        0&0&-(\tau_{11}+\tau_{22})\\
    \end{smallmatrix}\right],
\end{equation}
with the corresponding bilinear forms the same as those in \cite[Section 3.1]{theisen2021fenics}.

\section{Penalty-free mixed finite elements scheme}
\label{sec:mfem}

\subsection{Finite element spaces and Galerkin formulation}
Let $\mathcal T_h$ be a conforming triangulation of a domain ${\Omega}\subset \mathbb R^d$ into non-degenerate simplices. 
We assume that the family of triangulations $\{\mathcal T_h\}_{h >0}$ is \textit{shape-regular} and \textit{quasi-uniform} (see \cite{brenner2008mathematical}). 
Additionally, for $d=3$, we impose a mild regularity assumption on the mesh: every tetrahedron must contain at least one vertex in the domain interior to facilitate the analysis.
For each element $K \in\mathcal{T}_h$, 
we denote its diameter by $h_K \coloneqq \operatorname{diam}(K)$
and define the global mesh size as $h \coloneqq \max_{K \in\mathcal{T}_h} h_K$. 

For any simplex $K\in\mathcal T_h$,
we denote by $\Delta_l(K)$ the set of its $l$-dimensional sub-simplices for $0\le l\le d$.
For instance, 
$\Delta_d(K)=\{K\}$ consists of the simplex itself,
while 
$\Delta_0(K)$ is the set of its $d+1$ vertices.

We denote by $\mathcal P_{m}(D;{\mathbb X})$
the space of polynomials of degree at most $m$
on a domain $D$ with values in  $\mathbb X$.
Then the continuous $\mathbb X$-valued Lagrange elements of degree $m$
is defined as
$\mathbb{P}_{h}^{m}(\Omega;\mathbb X)\coloneqq\{v_h \in H^1(\Omega;\mathbb X):v_h|_{K}\in \mathcal P_m(K;\mathbb X)~\forall K \in \mathcal{T}_h\}.$

On each element $K \in \mathcal{T}_h$, we define the volume bubble function $b_K \coloneqq \prod_{i=1}^{d+1} \lambda_i$, where $\lambda_i$ are the barycentric coordinates of $K$. Let $\mathscr{B}_{K}^m \coloneqq \{ b_K \mathcal{E}(p) : p \in \mathcal{P}_{m}(K;\mathbb{R}^d) \}$ denote the local bubble space.
For $m \ge d$, we introduce the finite element quintuplet 
$(\Sigma_h^m, \mathrm{U}_h^m, \mathrm{P}_h^m,\mathrm{S}_h^m, \Theta_h^m)$ defined by:
\begin{align}
\label{eq:FESpaces}
\begin{cases}
\Sigma_h^{m} \coloneqq \left\{\boldsymbol{\tau}_h\in H^1(\Omega;\mathbb T):\boldsymbol{\tau}_h|_{K}\in \big(\mathcal{P}_{m}(K;\mathbb T)\oplus \mathscr B^m_K\big)\,\,\forall K \in \mathcal T_h\right\},\\ 
{\rm U}_h^{m} \coloneqq \mathbb P^m_h(\Omega;\mathbb R^d),\quad
{\rm S}_h^{m} \coloneqq \mathbb P^m_h(\Omega;\mathbb R^d),\\
\mathrm{P}_h^{m} \coloneqq \mathbb P^{m-1}_h(\Omega)\cap \tilde{H}^1(\Omega),
\quad
{\rm \Theta}_h^{m} \coloneqq \mathbb P^{m-1}_h(\Omega).    
\end{cases}
\end{align}

In the subsequent analysis, we focus on the case $m=d$ for dimensions $d=2,3$ and omit the superscript $m$ for brevity.
This spatial discretization pairs a bubble-enriched tensor element with two copies of $\mathbb P_d$-$\mathbb P_{d-1}$ Taylor-Hood elements.
Also, we refer to the quintuplet of spaces $(\Sigma_h,{\rm U}_h,{\rm P}_h,{\rm S}_h,{\Theta}_h)$ as the $\mathbb {P}_d^{\rm b}$-$\mathbb{P}_d$-$\mathbb P_{d-1}$-$\mathbb{P}_d$-$\mathbb{P}_{d-1}$ element.

\begin{remark}[Necessity for $m\geq d$]
The condition $m \ge d$ is sufficient to ensure the existence of a trivial right inverse of the divergence operator in $\Sigma^m_h$ for $\ker(\mathcal E)$. 
As detailed in \cref{lem:characteristic_kernel}, this kernel satisfies the inclusion $\mathcal P_{d-2} \subsetneq \ker(\mathcal E) \subsetneq \mathcal P_{d-1}$.
\end{remark}

\begin{remark}[Why 2D]
While $d=2$ can be viewed as a special case of $d=3$, adopting this perspective would impose the restriction $m \ge 3$ required by the 3D analysis. 
This would preclude the use of the lowest-order element ($m=2$) in two dimensions. Given the significant expense associated with higher-order elements, we focus primarily on the minimal case $m=d$. Nevertheless, the proposed scheme is readily extensible to $m > d$ for higher orders of convergence.
\end{remark}


Let $(\boldsymbol{\tau}_h, \boldsymbol{r}_h, q_h, \boldsymbol{v}_h, \gamma_h)$ denote the test functions corresponding to solution fields $(\boldsymbol{\sigma}_h, \boldsymbol{s}_h, p_h, \boldsymbol{u}_h, \theta_h)$, respectively.
For simplicity, we define
\begin{equation}
\label{eq:grouping_discrete}
\begin{aligned}
&\text{Galerkin Solutions:}&
\boldsymbol{\mathcal{S}}_h \coloneqq (\boldsymbol{\sigma}_h, \boldsymbol{s}_h, p_h) \in \mathrm{T}_h, \quad &\boldsymbol{\mathcal{U}}_h \coloneqq (\boldsymbol{u}_h, \theta_h) \in \mathrm{W}_h, \\
&\text{Discrete Test Functions:}&
\boldsymbol{\mathcal{R}}_h \coloneqq (\boldsymbol{\tau}_h,
\boldsymbol{r}_h, q_h) \in \mathrm{T}_h, \quad &\boldsymbol{\mathcal{V}}_h \coloneqq (\boldsymbol{v}_h, \gamma_h) \in \mathrm{W}_h,
\end{aligned}
\end{equation}
where
${\rm T}_h\coloneqq \Sigma_h \times {\rm U}_h \times {\rm P}_h$,
${\rm W}_h\coloneqq {\rm U}_h \times \Theta_h$.
Then the Galerkin formulation reads: 
find \(\boldsymbol{\mathcal{S}}_h \in \mathrm{T}_h,\,\, \boldsymbol{\mathcal{U}}_h \in \mathrm{W}_h\) such that
\begin{equation}
\begin{aligned}
\label{eq:MFEM_Scheme}
\mathcal{A}(\boldsymbol{\mathcal{S}}_h,\boldsymbol{\mathcal{R}}_h)+ \mathcal{B}(\boldsymbol{\mathcal{R}}_h,\boldsymbol{\mathcal{U}}_h) &= \mathcal{F}(\boldsymbol{\mathcal{R}}_h) \quad &\forall\, \boldsymbol{\mathcal{R}}_h &\in \mathrm{T}_h,
\\
\mathcal{B}(\boldsymbol{\mathcal{S}}_h,\boldsymbol{\mathcal{V}}_h) &= 0\quad &\forall\, \boldsymbol{\mathcal{V}}_h &\in \mathrm{W}_h.
\end{aligned}
\end{equation}

\subsection{Bubble-enriched element space and interpolation operator}
Following \cite{an2024decoupled}, the local degrees of freedom (DoFs) for $\Sigma_h$ on $K$ are defined by
\begin{equation}
    \label{eq:Sigmah_dofs}
    \begin{array}{ll}
        \boldsymbol{\tau}(\delta), & \forall \delta\in\Delta_0(K), \\
        (\boldsymbol{\tau},\boldsymbol{\sigma})_{f}, & \forall\boldsymbol{\sigma}\in \mathcal{P}_{d-(r+1)}(f;\mathbb{T}),\,
        f\in\Delta_r(K),\,
        r=1,\cdots,d-1,\\
(\boldsymbol{\tau},\boldsymbol{\sigma})_{K},&\forall\boldsymbol{\sigma}\in {\stfD} \mathcal P_{d}(K;\mathbb R^d).
    \end{array}
\end{equation}
The unisolvence of \cref{eq:Sigmah_dofs} is established in \cite[Lemma 2.1]{an2024decoupled}.

Notably, these DoFs induce an $H^1$-stable interpolator that preserves homogeneous traces for non-smooth tensor fields.
Define the subspace with homogeneous boundary conditions as
$
\Sigma_{h,0}:= \Sigma_h \cap H^1_0(\Omega;\mathbb T).
$
Let $Q_K:L^2(K;\mathbb{T})\rightarrow \mathcal{P}_{d}(K;\mathbb{T})\oplus \mathscr{B}_{K}$ be the local $L^2$-orthogonal projection operator on $K$. 
The interpolation operator ${\cal I}_h:H_0^1(\Omega;\mathbb{T})\to\Sigma_{h,0}$
is uniquely determined by:
\begin{equation}
    \label{eq:interp_operator}
    \begin{aligned}
        {\cal I}_h\boldsymbol{\tau}(\delta)&:=\frac{1}{\#\mathcal{T}_{\delta}}\sum_{K\in\mathcal{T}_\delta}(Q_K\boldsymbol{\tau})(\delta),\\
        ({\cal I}_h\boldsymbol{\tau},\boldsymbol{\sigma})_{f}&:=\frac{1}{\#\mathcal{T}_{f}}\sum_{K\in\mathcal{T}_f}(Q_K\boldsymbol{\tau},\boldsymbol{\sigma})_{f}\quad\forall\boldsymbol{\sigma}\in\mathcal{P}_{k-(r+1)}(f;\mathbb{T}),\\
        ({\cal I}_h\boldsymbol{\tau},\boldsymbol{\sigma})_{K}&:=(\boldsymbol{\tau},\boldsymbol{\sigma})_{K}\quad
        \forall\boldsymbol{\sigma}\in{\cal E}(\mathcal{P}_d(K;\mathbb{R}^d)),
    \end{aligned}
\end{equation}
for all interior vertices $\delta$, interior $r$-dimensional sub-simplices $f$ ($r=1,\cdots,d-1$) and $d$-dimensional simplex elements $K$ of $\mathcal T_h$.
Here, $\mathcal{T}_{\delta}$ and $\mathcal{T}_f$ denote the sets of elements in $\mathcal{T}_h$ sharing the vertex $\delta$ or the sub-simplex $f$, respectively. Analogous to \cite[Lemma 2.2]{an2024decoupled}, this operator satisfies the following stability and approximation properties:
\begin{lemma}
    \label{Lem:interp_op}
    The interpolation operator ${\cal I}_h:H_0^1(\Omega;\mathbb{T})\rightarrow\Sigma_{h,0}$ defined by \cref{eq:interp_operator} is $H^1$-stable and preserves the homogeneous trace. It has the following properties:
    \begin{itemize}
        \item (Homogeneous trace preservation) ${\cal I}_h\big({H_0^1(\Omega;\mathbb{T})}\big)\subseteq {H_0^1(\Omega;\mathbb{T})}$,
        \item ($H^1$-stability) $\|{\cal I}_h\boldsymbol{\tau}\|_1\lesssim|\boldsymbol{\tau}|_{1}$,
        \item ($L^2$-approximation property) $\|\boldsymbol{\tau}-{\cal I}_h\boldsymbol{\tau}\|_{0}\lesssim h|\boldsymbol{\tau}|_{1}$,
        \item ($H^1$-approximation property) $|\boldsymbol{\tau}-{\cal I}_h\boldsymbol{\tau}|_1\lesssim|\boldsymbol{\tau}|_1$.
    \end{itemize}
\end{lemma}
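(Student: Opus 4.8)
The plan is to adapt the proof of \cite[Lemma~2.2]{an2024decoupled}, viewing $\mathcal{I}_h$ as a Scott--Zhang/Cl\'ement-type quasi-interpolant assembled from the local $L^2$-projections $Q_K$ and a subsequent vertex/sub-simplex averaging. First I would check that \cref{eq:interp_operator} is well posed: by the unisolvence of the DoFs \cref{eq:Sigmah_dofs} (\cite[Lemma~2.1]{an2024decoupled}), prescribing all interior vertex, interior sub-simplex and volume DoFs while setting the remaining (boundary) DoFs to zero determines a unique $\mathcal{I}_h\boldsymbol{\tau}\in\Sigma_h$. Since each bubble $b_K\mathcal{E}(p)$ vanishes on $\partial K$, the trace of $\mathcal{I}_h\boldsymbol{\tau}$ on a boundary face $f\subset\Gamma$ is the $\mathcal{P}_d(f;\mathbb{T})$-polynomial fixed solely by the DoFs carried by $f$ and its sub-simplices, all of which are zero; hence $\mathcal{I}_h\boldsymbol{\tau}|_\Gamma=0$, i.e. $\mathcal{I}_h\big(H^1_0(\Omega;\mathbb{T})\big)\subseteq H^1_0(\Omega;\mathbb{T})$, and homogeneous-trace preservation holds by construction.

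The stability and approximation estimates I would prove elementwise and then sum. Fix $K$ and let $\omega_K$ be the patch of all elements sharing at least one vertex with $K$. Scaling to a reference simplex and using the $L^2$-stability of $Q_K$, shape-regularity/quasi-uniformity, and standard inverse inequalities on the bounded-degree space $\mathcal{P}_d(K;\mathbb{T})\oplus\mathscr{B}_K$, each DoF value of $\mathcal{I}_h\boldsymbol{\tau}$ is controlled, up to its natural scaling, by $\|\boldsymbol{\tau}\|_{0,\omega_K}$, which yields the raw local bounds $\|\mathcal{I}_h\boldsymbol{\tau}\|_{0,K}\lesssim\|\boldsymbol{\tau}\|_{0,\omega_K}$ and $|\mathcal{I}_h\boldsymbol{\tau}|_{1,K}\lesssim h_K^{-1}\|\mathcal{I}_h\boldsymbol{\tau}\|_{0,K}\lesssim h_K^{-1}\|\boldsymbol{\tau}\|_{0,\omega_K}$. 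If $\omega_K$ lies in the interior, $\mathcal{I}_h$ reproduces any $\boldsymbol{p}\in\mathcal{P}_d(\omega_K;\mathbb{T})$ on $K$ (since $Q_{K'}\boldsymbol{p}=\boldsymbol{p}|_{K'}$), so applying the bounds to $\boldsymbol{\tau}-\boldsymbol{p}$ together with a Bramble--Hilbert/averaged-Taylor estimate gives $|\mathcal{I}_h\boldsymbol{\tau}|_{1,K}\lesssim|\boldsymbol{\tau}|_{1,\omega_K}$ and $\|\boldsymbol{\tau}-\mathcal{I}_h\boldsymbol{\tau}\|_{0,K}\lesssim h_K|\boldsymbol{\tau}|_{1,\omega_K}$. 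If $\omega_K$ meets $\Gamma$, polynomial reproduction fails because boundary DoFs are zeroed; instead I would use $\boldsymbol{\tau}\in H^1_0$ and a Friedrichs-type inequality on the patch, $\|\boldsymbol{\tau}\|_{0,\omega_K}\lesssim h_K|\boldsymbol{\tau}|_{1,\omega_K}$, so that the zeroing is consistent with optimal orders: $|\mathcal{I}_h\boldsymbol{\tau}|_{1,K}\lesssim h_K^{-1}\|\boldsymbol{\tau}\|_{0,\omega_K}\lesssim|\boldsymbol{\tau}|_{1,\omega_K}$ and $\|\boldsymbol{\tau}-\mathcal{I}_h\boldsymbol{\tau}\|_{0,K}\lesssim\|\boldsymbol{\tau}\|_{0,K}+\|\mathcal{I}_h\boldsymbol{\tau}\|_{0,K}\lesssim\|\boldsymbol{\tau}\|_{0,\omega_K}\lesssim h_K|\boldsymbol{\tau}|_{1,\omega_K}$. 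Summing over $K$ with the finite-overlap property of $\{\omega_K\}$, and finally invoking the global Friedrichs inequality $\|\boldsymbol{\tau}\|_0\lesssim|\boldsymbol{\tau}|_1$ on $H^1_0$, produces all four stated estimates (in particular $\|\mathcal{I}_h\boldsymbol{\tau}\|_1^2=\|\mathcal{I}_h\boldsymbol{\tau}\|_0^2+|\mathcal{I}_h\boldsymbol{\tau}|_1^2\lesssim|\boldsymbol{\tau}|_1^2$).

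I expect the main obstacle to be the boundary-adjacent elements: there $\mathcal{I}_h$ no longer reproduces polynomials, so the classical interior Cl\'ement argument does not apply and one must rely on an $h$-scaled Friedrichs inequality over $\omega_K$ — valid because such a patch is connected to $\Gamma$ through a face of positive measure — with constants that stay $h$-independent under mere shape-regularity and quasi-uniformity; this is precisely where the mesh hypothesis (for $d=3$, every tetrahedron has an interior vertex) is needed, to guarantee that each element still carries at least one genuine averaging DoF and is never ``blind'' to $\boldsymbol{\tau}$. A lighter technical point is to confirm that the bubble enrichment $\mathscr{B}_K$ spoils neither the local inverse inequalities nor the $L^2$-stability of $Q_K$, which holds since $\mathcal{P}_d(K;\mathbb{T})\oplus\mathscr{B}_K$ has uniformly bounded degree and dimension on shape-regular meshes.
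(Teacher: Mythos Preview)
Your proposal is correct and follows essentially the same route as the paper: the paper does not give a self-contained argument but simply refers to \cite[Lemma~2.2]{an2024decoupled} for $d=2$ (with details deferred to supplementary material) and states that the $d=3$ case follows by analogous arguments, which is exactly the Cl\'ement/Scott--Zhang-type construction you outline. One minor remark: the $d=3$ mesh hypothesis (every tetrahedron has an interior vertex) is invoked in the paper for the Bercovier--Pironneau inf-sup condition of the Taylor--Hood pair (\cref{lem:bercovier_type_inf_sup}), not for the interpolation operator itself; the boundary-patch Friedrichs argument you describe does not require it, since the volume DoF on each $K$ is always active and the Friedrichs constant on a patch meeting $\Gamma$ through a full face scales correctly under shape-regularity alone.
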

\begin{remark}
    For the case $d=2$, \cref{Lem:interp_op} corresponds to the result established in \cite[Lemma 2.2]{an2024decoupled}; a detailed proof is also provided in our supplementary material. 
    For $d=3$, the result follows from analogous arguments, 
    noting that the space of symmetric trace-free tensors is a subspace of the space of symmetric tensors.
\end{remark}

\subsection{Kernel space and orthogonal decomposition}
To establish a unified framework for both two- and three-dimensional cases,
we begin by characterizing the kernel of $\stfD$ and deriving an orthogonal decomposition of the discrete space ${\rm U}_h$.

Denote by $\cal K$ the kernel of $\stfD$
in $L^2(\Omega;\mathbb R^d)$ in the distributional sense:
\begin{equation}
    \mathcal{K} \coloneqq \ker(\stfD) = \{ \boldsymbol{v} \in L^2(\Omega; \mathbb{R}^d) \mid \stfD \boldsymbol{v} = \boldsymbol{0} \}.
\end{equation}

We now present a series of lemmas characterizing the properties of $\stfD$ and $\mathcal{K}$.

\begin{lemma}[Adjoint Operator] \label{lem:adjoint_div_grad}
The operator $\stfD: L^2(\Omega; \mathbb{R}^d) \to H^{-1}(\Omega; \mathbb{T})$ is the adjoint of divergence operator $-\nabla\cdot: H_0^1(\Omega; \mathbb{T}) \to L^2(\Omega; \mathbb{R}^d)$, i.e., 
$-{\nabla \cdot} = (\stfD)'$.
\end{lemma}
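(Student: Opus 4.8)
The claim is essentially integration by parts, once one notices that tensors valued in $\mathbb{T}$ do not see the part of $\nabla\boldsymbol{v}$ that $\stfD$ discards. Write $\pi$ for the relevant projection, so that $\stfD=\pi\,\nabla$ with $\pi=\mathrm{sym}$ for $d=2$ and $\pi=\mathrm{stf}$ for $d=3$. The plan is: first record the algebraic fact that $\mathrm{sym}$ and $\mathrm{stf}$ are the \emph{orthogonal} projections of $\mathbb{R}^{d\times d}$ onto $\mathbb{R}^{d\times d}_{\rm sym}$ and $\mathbb{R}^{d\times d}_{\rm stf}$ with respect to the Frobenius inner product; in particular $\pi$ is a self-adjoint idempotent and $\pi\boldsymbol{\tau}=\boldsymbol{\tau}$ for every $\boldsymbol{\tau}\in\mathbb{T}$. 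Consequently, for any matrix field $\boldsymbol{A}$ and any pointwise-$\mathbb{T}$-valued $\boldsymbol{\tau}$ one has $(\pi\boldsymbol{A}):\boldsymbol{\tau}=(\pi\boldsymbol{A}):(\pi\boldsymbol{\tau})=\boldsymbol{A}:(\pi\boldsymbol{\tau})=\boldsymbol{A}:\boldsymbol{\tau}$, and taking $\boldsymbol{A}=\nabla\boldsymbol{v}$ gives the pointwise identity $(\stfD\boldsymbol{v}):\boldsymbol{\tau}=\nabla\boldsymbol{v}:\boldsymbol{\tau}$.

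Next I would unwind the distributional definition of $\stfD\boldsymbol{v}$ for $\boldsymbol{v}\in L^2(\Omega;\mathbb{R}^d)$: its formal adjoint is $(\stfD)^{*}=-\nabla\cdot\circ\,\pi$, so for $\boldsymbol{\tau}\in C_0^\infty(\Omega;\mathbb{T})$,
\[
\langle\stfD\boldsymbol{v},\boldsymbol{\tau}\rangle
= \int_\Omega \boldsymbol{v}\cdot\big({-}\nabla\cdot(\pi\boldsymbol{\tau})\big)\,\mathrm{d}x
= \int_\Omega \boldsymbol{v}\cdot\big({-}\nabla\cdot\boldsymbol{\tau}\big)\,\mathrm{d}x
= \big(\boldsymbol{v},\,{-}\nabla\cdot\boldsymbol{\tau}\big),
\]
where the middle equality uses $\pi\boldsymbol{\tau}=\boldsymbol{\tau}$ (equivalently, for smooth $\boldsymbol{v}$ this is just $\int(\pi\nabla\boldsymbol{v}):\boldsymbol{\tau}=\int\nabla\boldsymbol{v}:\boldsymbol{\tau}=-\int\boldsymbol{v}\cdot\nabla\cdot\boldsymbol{\tau}$, with no boundary terms since $\boldsymbol{\tau}$ has compact support, and the general case follows by $L^2$-density). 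This estimate also shows $\|\stfD\boldsymbol{v}\|_{-1,\Omega}\le\|\boldsymbol{v}\|_{0,\Omega}$, so $\stfD\boldsymbol{v}\in H^{-1}(\Omega;\mathbb{T})$ and $\stfD$ is a bounded operator $L^2(\Omega;\mathbb{R}^d)\to H^{-1}(\Omega;\mathbb{T})$.

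Finally, since both sides of $\langle\stfD\boldsymbol{v},\boldsymbol{\tau}\rangle=(\boldsymbol{v},-\nabla\cdot\boldsymbol{\tau})$ are continuous in $\boldsymbol{\tau}$ for the $H^1$-norm, a density argument ($C_0^\infty(\Omega;\mathbb{T})$ dense in $H_0^1(\Omega;\mathbb{T})$) extends it to all $\boldsymbol{\tau}\in H_0^1(\Omega;\mathbb{T})$. Holding for every $\boldsymbol{v}\in L^2(\Omega;\mathbb{R}^d)$ and $\boldsymbol{\tau}\in H_0^1(\Omega;\mathbb{T})$, this is precisely the assertion $-\nabla\cdot=(\stfD)'$ (equivalently $\stfD=(-\nabla\cdot)'$). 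There is no substantial obstacle here; the only point deserving care is the algebraic reduction $(\stfD\boldsymbol{v}):\boldsymbol{\tau}=\nabla\boldsymbol{v}:\boldsymbol{\tau}$, i.e.\ checking that $\mathrm{sym}$ and $\mathrm{stf}$ are genuinely orthogonal projections onto $\mathbb{T}$ so that the discarded components $\mathrm{skew}\,\nabla\boldsymbol{v}$ and $\tfrac1d(\nabla\cdot\boldsymbol{v})\boldsymbol{I}_d$ are Frobenius-orthogonal to every element of $\mathbb{T}$. For $\mathrm{stf}$ this amounts to verifying that $\mathrm{sym}$ and the trace-removal $\boldsymbol{A}\mapsto\boldsymbol{A}-\tfrac1d(\mathrm{tr}\,\boldsymbol{A})\boldsymbol{I}_d$ commute, whence their composition is the orthogonal projection onto the intersection $\mathbb{R}^{d\times d}_{\rm stf}$.
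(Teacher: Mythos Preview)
Your proof is correct and follows essentially the same route as the paper: both hinge on the algebraic identity $(\stfD\boldsymbol{v}):\boldsymbol{\tau}=\nabla\boldsymbol{v}:\boldsymbol{\tau}$ for $\boldsymbol{\tau}\in\mathbb{T}$, obtained by observing that the part of $\nabla\boldsymbol{v}$ discarded by $\pi$ (skew for $d=2$, skew plus pure trace for $d=3$) is Frobenius-orthogonal to $\mathbb{T}$, followed by integration by parts. You are slightly more careful than the paper in treating the distributional pairing and invoking density to pass from $C_0^\infty$ to $H_0^1$, whereas the paper writes the duality pairing directly as an $L^2$ inner product; but this is a matter of exposition, not a different argument.
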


\begin{proof}
Let $\boldsymbol{v} \in L^2(\Omega; \mathbb{R}^d)$ and $\boldsymbol{\tau} \in H_0^1(\Omega; \mathbb{T})$. 
A key observation is that the gradient $\nabla \boldsymbol{v}$ can be decomposed as
$
\nabla\boldsymbol{v} = \stfD \boldsymbol{v} + (\nabla \boldsymbol{v} - \stfD \boldsymbol{v}).
$

By the definitions of $\stfD$ and $\mathbb{T}$, the second term $(\nabla\boldsymbol{v} - \stfD \boldsymbol{v})$ is orthogonal to any tensor in $\mathbb{T}$ with respect to the Frobenius inner product. 
For $d=2$, $\nabla\boldsymbol{v} - \stfD\boldsymbol{v} = \mathrm{skew}(\nabla\boldsymbol{v})$, which is orthogonal to the symmetric tensor $\boldsymbol{\tau}$. 
For $d=3$, $\nabla\boldsymbol{v} - \stfD\boldsymbol{v}$ is a sum of a skew-symmetric and a pure trace tensor, both of which are orthogonal to the symmetric and trace-free tensor $\boldsymbol{\tau}$.
Thus, in both cases, we have $(\stfD \boldsymbol{v}, \boldsymbol{\tau}) = (\nabla\boldsymbol{v}, \boldsymbol{\tau})$. 


Using this identity and integration by parts, we obtain
\begin{equation}
    \langle \stfD \boldsymbol{v}, \boldsymbol{\tau} \rangle_{H^{-1}, H_0^1} = (\stfD \boldsymbol{v}, \boldsymbol{\tau}) = (\nabla\boldsymbol{v}, \boldsymbol{\tau}) = -(\boldsymbol{v}, \nabla\cdot\,\boldsymbol{\tau}), 
\end{equation}
which completes the proof.
\end{proof}

Next, we characterize the kernel of the operator ${\cal E}$.
\begin{lemma}[Characterization of the Kernel]
\label{lem:characteristic_kernel}
The kernel of $\mathcal{E}$ in the distributional sense, ${\cal K} \coloneqq \ker(\mathcal{E}|_{L^2})$, is a finite-dimensional space of polynomials. Specifically:
\begin{itemize}
    \item For $d=2$, $\mathcal{K}$ is the space of rigid motions $\bf RM$, which are affine maps of the form $\boldsymbol{v}(\mathbf{x}) = \mathbf{a} + A\mathbf{x}$ where $\mathbf{a} \in \mathbb{R}^2$ and $A \in \mathbb{R}^{2\times 2}$ is a skew-symmetric matrix. These are polynomials of degree at most one, leading to ${\cal P}_0 \subsetneq \mathbf{RM} \subsetneq \mathcal{P}_1$. 
  
    \item For $d=3$, $\mathcal{K}$ is the space of conformal Killing maps $\bf CK$ composed of
    \begin{equation} \label{eq:conformal_killing_map}
        \boldsymbol{v}(\mathbf{x}) = \mathbf{a} + \lambda\mathbf{x} + A\mathbf{x} + (2(\mathbf{b}\cdot\mathbf{x})\mathbf{x} - \|\mathbf{x}\|^2\mathbf{b}),
    \end{equation}
    for some constants $\mathbf{a}, \mathbf{b} \in \mathbb{R}^3$, $\lambda \in \mathbb{R}$, and a skew-symmetric matrix $A \in \mathbb{R}^{3\times 3}$. These are polynomials of degree at most two, leading to ${\cal P}_1 \subsetneq \mathbf{CK} \subsetneq \mathcal{P}_2$.
\end{itemize}
\end{lemma}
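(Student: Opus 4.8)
\emph{Overview of the plan.} I would treat $d=2$ and $d=3$ uniformly, reading $\stfD\boldsymbol v=\boldsymbol 0$ as a Killing ($d=2$) resp.\ conformal Killing ($d=3$) equation, and argue in two stages: first that every distributional solution agrees a.e.\ with a vector polynomial of degree at most $d-1$, and then identify that polynomial space explicitly by matching coefficients.

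\emph{Step 1 (solutions are polynomials).} For $\boldsymbol v\in L^2(\Omega;\mathbb R^d)$ put $\varepsilon_{ij}:=\tfrac12(\partial_i v_j+\partial_j v_i)$ and recall the classical identity $\partial_k\partial_l v_i=\partial_l\varepsilon_{ik}+\partial_k\varepsilon_{il}-\partial_i\varepsilon_{kl}$, valid in $\mathcal D'(\Omega)$. When $d=2$, $\stfD\boldsymbol v=\mathrm{sym}\,\nabla\boldsymbol v=0$ means $\varepsilon\equiv 0$, so the identity gives $\nabla^2 v_i=0$ and $\boldsymbol v$ is affine. When $d=3$, $\stfD\boldsymbol v=\mathrm{stf}\,\nabla\boldsymbol v=0$ means $\varepsilon_{ij}=\tfrac13\phi\,\delta_{ij}$ with $\phi:=\nabla\cdot\boldsymbol v$, and the identity becomes $\partial_k\partial_l v_i=\tfrac13(\delta_{ik}\partial_l\phi+\delta_{il}\partial_k\phi-\delta_{kl}\partial_i\phi)$. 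Contracting $k=l$ yields $\Delta v_i=-\tfrac13\partial_i\phi$, whence $\Delta\phi=\partial_i\Delta v_i=-\tfrac13\Delta\phi$, i.e.\ $\Delta\phi=0$. Differentiating the displayed identity once more and using that $\partial_m\partial_k\partial_l v_i$ is symmetric under the exchange $m\leftrightarrow k$, then contracting once and invoking $\Delta\phi=0$, collapses everything to $\partial_m\partial_l\phi=0$ for all $m,l$; hence $\nabla^3 v_i=0$ and $\boldsymbol v$ agrees a.e.\ with a vector polynomial of degree at most $2$. Throughout I use the standard facts that on a connected open set a distribution with vanishing gradient is constant, and one whose $k$-th order derivatives all vanish is a polynomial of degree $<k$.

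\emph{Step 2 (identifying the space).} Insert the general low-degree ansatz into $\stfD\boldsymbol v=0$ and match coefficients degree by degree. For $d=2$, $\boldsymbol v=\boldsymbol a+M\mathbf x$ with $\mathrm{sym}\,M=0$, i.e.\ $M$ skew-symmetric; this is exactly $\mathbf{RM}$, of dimension $3$. For $d=3$, write $\boldsymbol v=\boldsymbol a+M\mathbf x+\boldsymbol q(\mathbf x)$ with $\boldsymbol q$ homogeneous quadratic, $q_i=\tfrac12 Q_{ijk}x_j x_k$ ($Q$ symmetric in $j,k$). The degree-$0$ part of $\mathrm{stf}\,\nabla\boldsymbol v=0$ forces $\mathrm{sym}\,M=\lambda\boldsymbol I$ with $\lambda=\tfrac13\mathrm{tr}\,M$, so $M=\lambda\boldsymbol I+A$ with $A$ skew; the degree-$1$ part gives the linear constraint $Q_{ijk}+Q_{jik}=\tfrac23 Q_{llk}\delta_{ij}$, whose solution set is the three-parameter family $Q_{ijk}=\delta_{ij}b_k+\delta_{ik}b_j-\delta_{jk}b_i$, equivalently $\boldsymbol q(\mathbf x)=2(\mathbf b\cdot\mathbf x)\mathbf x-\|\mathbf x\|^2\mathbf b$ after rescaling $\mathbf b$. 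Reassembling the three pieces recovers \eqref{eq:conformal_killing_map}, and $\mathbf{CK}$ has dimension $3+3+1+3=10$, in agreement with its classical identification as the Lie algebra of conformal transformations of $\mathbb R^3$. Since $\mathbf{RM}$ and $\mathbf{CK}$ are finite-dimensional, consist of polynomials of degree at most $d-1$, contain every constant field, and fail to contain some field of degree $d-1$ (e.g.\ $\mathbf x\mapsto x_1\boldsymbol e_1$ for $d=2$, and $\mathbf x\mapsto x_1^2\boldsymbol e_1$ for $d=3$), the finite-dimensionality, the polynomial-degree characterization, and the strict inclusions claimed in the lemma all follow by a dimension count.

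\emph{Main obstacle.} The only genuinely delicate step is the $d=3$ reduction to polynomials, and within it the derivation of $\nabla^2\phi=0$: it requires combining the strain--gradient identity with the symmetry of mixed third partials and two successive contractions, carefully tracking the trace terms, all at the distributional level. Once this is in place, Step 2 is elementary linear algebra — alternatively one may simply invoke the Liouville-type classification of conformal Killing fields on $\mathbb R^n$, $n\ge 3$.
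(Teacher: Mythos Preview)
Your proposal is correct and takes a genuinely different route from the paper. The paper's own ``proof'' is a bare pair of citations (to Ciarlet for $\mathbf{RM}$ and to Schirra for $\mathbf{CK}$) with no argument supplied. You instead give a self-contained derivation: first the strain--gradient identity $\partial_k\partial_l v_i=\partial_l\varepsilon_{ik}+\partial_k\varepsilon_{il}-\partial_i\varepsilon_{kl}$ is used at the distributional level to force polynomial regularity of degree at most $d-1$, and then a coefficient match on the polynomial ansatz pins down the space. The $d=3$ reduction (showing $\nabla^2\phi=0$ via symmetry of mixed third partials and a contraction, then $\nabla^3\boldsymbol v=0$) is the classical path to Liouville's theorem on conformal Killing fields, carried out carefully here in the weak sense. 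What the paper's approach buys is brevity; what yours buys is self-containment and a transparent explanation of why the quadratic special-conformal term $2(\mathbf b\cdot\mathbf x)\mathbf x-\|\mathbf x\|^2\mathbf b$ appears. One small caveat: your final clause establishes only that $\mathcal K$ contains all constants and omits some field of degree $d-1$, which verifies $\mathcal P_0\subsetneq\mathcal K\subsetneq\mathcal P_{d-1}$ but not literally the lower inclusion $\mathcal P_1\subsetneq\mathbf{CK}$ asserted in the $d=3$ case (indeed that inclusion fails on dimension grounds, since $\dim\mathcal P_1(\Omega;\mathbb R^3)=12>10=\dim\mathbf{CK}$); this is an issue with the lemma's phrasing rather than with your argument.
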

\begin{proof}
For a rigorous proof, see, e.g., \cite[Theorem 2.2]{ciarlet2010onKorn} for $\mathbf{RM}$
and \cite[Proposition 2.5]{schirra2012new} for $\mathbf{CK}$. 
\end{proof}

Let $\mathrm{Y} \subset L^2(\Omega;\mathbb{R}^d)$ be a finite-dimensional space, for instance, $\mathbb P^k_h(\Omega;\mathbb R^d)$. 
The next lemma shows that if $\mathrm{Y}$ is sufficiently rich, 
its kernel under $\stfD$ is precisely $\mathcal{K}$.
\begin{lemma}[Invariance of the Kernel] \label{lem:kernel_invariance}
Let $\mathrm{Y}$ be a finite-dimensional subspace of $L^2(\Omega; \mathbb{R}^d)$ such that $\mathcal{K} \subseteq {\rm Y}$. Then the kernel of $\stfD$ restricted to $\rm{Y}$ coincides with $\mathcal{K}$.
\end{lemma}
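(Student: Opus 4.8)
The statement is, at bottom, a reformulation of the definition of $\mathcal{K}$, so the proof reduces to a short linear-algebra argument together with one small bookkeeping remark about the meaning of the operator on $\mathrm{Y}$.

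The plan is to prove the two inclusions separately. For $\mathcal{K} \subseteq \ker(\stfD|_{\mathrm{Y}})$: by hypothesis every $\boldsymbol{v} \in \mathcal{K}$ already lies in $\mathrm{Y}$, and by the very definition $\mathcal{K} = \ker(\stfD|_{L^2})$ it satisfies $\stfD \boldsymbol{v} = \boldsymbol{0}$; hence $\boldsymbol{v} \in \ker(\stfD|_{\mathrm{Y}})$. For the reverse inclusion, take $\boldsymbol{v} \in \ker(\stfD|_{\mathrm{Y}})$. Since $\mathrm{Y} \subseteq L^2(\Omega;\mathbb{R}^d)$ and $\stfD$ acting on $\mathrm{Y}$ is nothing but the restriction of the distributional operator $\stfD : L^2(\Omega;\mathbb{R}^d) \to H^{-1}(\Omega;\mathbb{T})$ (the same one appearing in \cref{lem:adjoint_div_grad}), the relation $\stfD \boldsymbol{v} = \boldsymbol{0}$ holds in the distributional sense, i.e.\ $\boldsymbol{v} \in \ker(\stfD|_{L^2}) = \mathcal{K}$. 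Combining the two inclusions gives $\ker(\stfD|_{\mathrm{Y}}) = \mathrm{Y} \cap \mathcal{K} = \mathcal{K}$, where the last equality again uses $\mathcal{K} \subseteq \mathrm{Y}$.

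The only point requiring any attention --- hardly an ``obstacle'' --- is to insist that ``$\stfD$ restricted to $\mathrm{Y}$'' denotes the genuine distributional differential operator rather than some mesh-dependent surrogate; for the piecewise-polynomial space $\mathbb{P}_h^k(\Omega;\mathbb{R}^d)$ of interest this coincides elementwise with the classical $\stfD$, so there is no ambiguity. Note also that neither the finite-dimensionality of $\mathrm{Y}$ nor the explicit description of $\mathcal{K}$ from \cref{lem:characteristic_kernel} enters the argument: the latter is needed only afterwards, to verify that the concrete spaces defined in \eqref{eq:FESpaces} do satisfy the hypothesis $\mathcal{K} \subseteq \mathrm{Y}$.
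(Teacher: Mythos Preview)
Your proof is correct and follows the same approach as the paper's: both establish the two inclusions directly from the definition $\mathcal{K} = \ker(\stfD|_{L^2})$ together with the hypothesis $\mathcal{K} \subseteq \mathrm{Y}$. The paper's version is terser (it calls both directions ``immediate'' and ``obvious''), while you spell out the converse inclusion and add the helpful observation that finite-dimensionality of $\mathrm{Y}$ is not actually used.
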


\begin{proof}
By definition, $\mathcal{K} = \ker(\stfD|_{L^2})$. The inclusion $\mathcal{K} \subseteq \ker(\stfD|_{\rm Y})$ is immediate from the assumption $\mathcal{K} \subseteq {\rm Y}$. The converse direction is also obvious.
\end{proof}

This result allows for a unique orthogonal decomposition of both the continuous space $L^2(\Omega; \mathbb{R}^d)$ and the discrete space $\mathrm{U}_h = \mathbb{P}^d_h(\Omega;\mathbb{R}^d)$ with respect to the kernel $\mathcal{K}$. 
Let $\mathcal{K}^\perp$ be the orthogonal complement of $\mathcal{K}$ in $L^2(\Omega; \mathbb{R}^d)$. 
By taking ${\rm Y}={\rm U}_h$, we have the following $L^2$-orthogonal decompositions:
\begin{align}
\label{eq:L2_orthgonal_decomposition}
    L^2(\Omega; \mathbb{R}^d) = \mathcal{K} \oplus \mathcal{K}^\perp, \quad
    \mathrm{U}_h = \mathcal{K} \oplus \mathrm{U}_{h,0},
\end{align}
where the orthogonal complement of $\mathcal{K}$ within the subspace $\mathrm{U}_h$ is denoted as ${\mathrm{U}_{h,0}}$,
\begin{equation}
\mathrm{U}_{h,0} \coloneqq \mathcal{K}^\perp \cap \mathrm{U}_h.    
\end{equation}

\begin{lemma}
\label{lem:subspace_property}
The space $\mathrm{U}_{h,0}$ is a closed subspace of $\mathcal{K}^\perp$.
\end{lemma}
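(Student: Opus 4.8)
The plan is to observe that this statement is a direct consequence of finite-dimensionality together with the elementary fact that orthogonal complements are closed. First I would recall that $\mathrm{U}_h = \mathbb{P}^d_h(\Omega;\mathbb{R}^d)$ is a finite element space associated with a finite triangulation $\mathcal{T}_h$, hence a finite-dimensional subspace of $L^2(\Omega;\mathbb{R}^d)$. Since $\mathrm{U}_{h,0} = \mathcal{K}^\perp \cap \mathrm{U}_h$ is a linear subspace of $\mathrm{U}_h$, it is itself finite-dimensional, and every finite-dimensional subspace of a normed space is closed in that space; in particular $\mathrm{U}_{h,0}$ is closed in $L^2(\Omega;\mathbb{R}^d)$, and by construction it is contained in $\mathcal{K}^\perp$.

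To argue that it is \emph{closed as a subspace of $\mathcal{K}^\perp$} in the inherited topology, I would invoke the standard characterization: a subset of $\mathcal{K}^\perp$ is closed in the subspace topology if and only if it is the intersection of $\mathcal{K}^\perp$ with a closed subset of $L^2(\Omega;\mathbb{R}^d)$. Taking $\mathrm{U}_h$ as that closed subset (it is closed in $L^2$ precisely because it is finite-dimensional) gives exactly $\mathrm{U}_{h,0} = \mathcal{K}^\perp \cap \mathrm{U}_h$, so $\mathrm{U}_{h,0}$ is closed in $\mathcal{K}^\perp$. An equivalent route is to note that both $\mathcal{K}^\perp$ (an orthogonal complement, hence closed) and $\mathrm{U}_h$ (finite-dimensional, hence closed) are closed linear subspaces of $L^2(\Omega;\mathbb{R}^d)$, and the intersection of two closed linear subspaces is again a closed linear subspace.

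There is essentially no obstacle here: the only point worth stating explicitly is the finite-dimensionality of $\mathrm{U}_h$, which is immediate from the definition of the Lagrange finite element space on the fixed mesh $\mathcal{T}_h$, and everything else is a routine topological consequence. The lemma is included to make the later use of the decomposition $\mathrm{U}_h = \mathcal{K} \oplus \mathrm{U}_{h,0}$ and of Poincaré/Korn-type estimates on $\mathrm{U}_{h,0}$ rigorous, so the proof can be kept to a couple of lines.
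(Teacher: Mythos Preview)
Your proposal is correct and matches the paper's argument essentially line for line: the paper also notes that $\mathcal{K}^\perp$ is closed as an orthogonal complement, that $\mathrm{U}_h$ is closed because it is finite-dimensional, and concludes that their intersection $\mathrm{U}_{h,0}$ is a closed subspace of $\mathcal{K}^\perp$. Your additional remark about the subspace topology is a harmless elaboration of the same one-line idea.
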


\begin{proof}
By definition, $\mathrm{U}_{h,0} = \mathcal{K}^\perp \cap \mathrm{U}_h$. Since $\mathcal{K}$ is a subset of the Hilbert space $L^2(\Omega; \mathbb{R}^d)$, its orthogonal complement $\mathcal{K}^\perp$ is a closed subspace. As a finite-dimensional subspace of a normed space, $\mathrm{U}_h$ is closed. Hence $\mathrm{U}_{h,0}$ is a closed subspace in $\mathcal{K}^\perp$.
\end{proof}

Finally, we point out that ${\cal I}_h$ is $B$-compatible for ${\rm U}_{h,0}$.
\begin{lemma}[$B$-Compatibility] 
$\forall \boldsymbol{u}_h\in \mathrm{U}_{h,0}$, $\left(\nabla\cdot(\boldsymbol{\tau}-{\cal I}_h\boldsymbol{\tau}),\boldsymbol{u}_h\right)=0$.    
\end{lemma}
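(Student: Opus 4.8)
The plan is to reduce the identity to the elementwise degree-of-freedom condition in the last line of \eqref{eq:interp_operator}, namely $(\mathcal{I}_h\boldsymbol{\tau},\boldsymbol{\sigma})_K=(\boldsymbol{\tau},\boldsymbol{\sigma})_K$ for all $\boldsymbol{\sigma}\in\stfD\mathcal{P}_d(K;\mathbb{R}^d)$, by first moving the divergence onto $\boldsymbol{u}_h$ via integration by parts and then trading $\nabla\boldsymbol{u}_h$ for $\stfD\boldsymbol{u}_h$ using the pointwise $\mathbb{T}$-orthogonality already recorded in the proof of \cref{lem:adjoint_div_grad}.

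Concretely, I would proceed as follows. Write $\boldsymbol{w}:=\boldsymbol{\tau}-\mathcal{I}_h\boldsymbol{\tau}$. Since $\mathcal{I}_h$ acts on $H^1_0(\Omega;\mathbb{T})$ and, by \cref{Lem:interp_op}, lands in $\Sigma_{h,0}\subset H^1_0(\Omega;\mathbb{T})$, the difference $\boldsymbol{w}$ again belongs to $H^1_0(\Omega;\mathbb{T})$, so its trace on $\Gamma$ vanishes. As $\boldsymbol{u}_h\in\mathrm{U}_{h,0}\subset\mathrm{U}_h\subset H^1(\Omega;\mathbb{R}^d)$, integration by parts is legitimate and the boundary contribution disappears, giving $(\nabla\cdot\boldsymbol{w},\boldsymbol{u}_h)=-(\boldsymbol{w},\nabla\boldsymbol{u}_h)$. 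Next, because $\boldsymbol{w}$ is $\mathbb{T}$-valued a.e., the splitting $\nabla\boldsymbol{u}_h=\stfD\boldsymbol{u}_h+(\nabla\boldsymbol{u}_h-\stfD\boldsymbol{u}_h)$ with the second summand pointwise orthogonal to $\mathbb{T}$ (exactly as argued for \cref{lem:adjoint_div_grad}) yields $(\boldsymbol{w},\nabla\boldsymbol{u}_h)=(\boldsymbol{w},\stfD\boldsymbol{u}_h)$. Finally, I split the integral over the mesh, $(\boldsymbol{w},\stfD\boldsymbol{u}_h)=\sum_{K\in\mathcal{T}_h}(\boldsymbol{\tau}-\mathcal{I}_h\boldsymbol{\tau},\stfD\boldsymbol{u}_h)_K$, observe that on each $K$ the restriction $\boldsymbol{u}_h|_K\in\mathcal{P}_d(K;\mathbb{R}^d)$ so that $\stfD\boldsymbol{u}_h|_K\in\stfD\mathcal{P}_d(K;\mathbb{R}^d)$, and invoke the last relation in \eqref{eq:interp_operator} to annihilate each element term. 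Summation closes the argument.

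The proof is essentially mechanical, so there is no genuine obstacle; the only two points that must be handled with (minor) care are the vanishing of the boundary term in the integration by parts — which relies precisely on the homogeneous-trace preservation of $\mathcal{I}_h$ from \cref{Lem:interp_op} — and the replacement of $\nabla\boldsymbol{u}_h$ by $\stfD\boldsymbol{u}_h$ when tested against a $\mathbb{T}$-valued function, which is exactly what makes the elementwise DoF condition applicable. I note in passing that the same computation in fact shows the identity holds for every $\boldsymbol{u}_h\in\mathrm{U}_h$; the restriction to $\mathrm{U}_{h,0}$ in the statement is merely what is needed downstream.
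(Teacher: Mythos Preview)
Your proof is correct and follows essentially the same route as the paper: use that $\boldsymbol{\tau}-\mathcal{I}_h\boldsymbol{\tau}\in H^1_0(\Omega;\mathbb{T})$ together with \cref{lem:adjoint_div_grad} to pass to $-(\boldsymbol{\tau}-\mathcal{I}_h\boldsymbol{\tau},\stfD\boldsymbol{u}_h)$, then split over elements and apply the last line of \eqref{eq:interp_operator}. Your remark that the identity in fact holds for all $\boldsymbol{u}_h\in\mathrm{U}_h$ is correct and worth noting.
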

\begin{proof}
Since $\boldsymbol{\tau}-{\cal I}_h\boldsymbol{\tau} \in H^1_0(\Omega;\mathbb{T})$,
\cref{lem:adjoint_div_grad} and the definition of $\mathcal I_h$
yield
\begin{equation}
\left(\nabla\cdot(\boldsymbol{\tau}-{\cal I}_h\boldsymbol{\tau}),\boldsymbol{u}_h\right)_K=
-\left(\boldsymbol{\tau}-{\cal I}_h\boldsymbol{\tau},\stfD\boldsymbol{u}_h\right)_K=0
\end{equation}
for each element $K \in \mathcal T_h$.
\end{proof}

\section{Stability and convergence}
\label{sec:stability_and_convergence}
In this section, we establish the stability of the penalty-free MFEM scheme and the order of convergence under certain mild regularity assumptions.
The unisolvence of MFEM is established via coercivity on the kernel and the inf-sup condition:
\begin{enumerate}
\item \textit{Coercivity on discrete kernel.} 
Let $\ker B_h \coloneqq \{\boldsymbol{\mathcal{S}}_h \in \mathrm{T}_h :\, \mathcal{B}(\boldsymbol{\mathcal{S}}_h, \boldsymbol{\mathcal{V}}_h)=0,\, \forall \,\boldsymbol{\mathcal{V}}_h \in \mathrm{W}_h\}$. 
Coercivity requires $\mathcal{A}(\boldsymbol{\mathcal{S}}_h, \boldsymbol{\mathcal{S}}_h) \gtrsim \|\boldsymbol{\mathcal{S}}_h\|^2$ for all $\boldsymbol{\mathcal{S}}_h \in \ker B_h$. 
By derivation, it suffices to show that 
for any $(\boldsymbol{\sigma}_h,\boldsymbol{s}_h,p_h)\in \ker B_h$,
\begin{equation}
\label{eq:SufficientCondition_coercive}
\tag{U-1}
 \begin{aligned}
      \left\|{\rm sym~\nabla} \boldsymbol{s}_h\right\|_0^2
      + \|\boldsymbol{s}_h\|_0^2 
      + \left\|{\rm Stf~\nabla} \boldsymbol{\sigma}_h\right\|_{0}^2 + \|\boldsymbol{\sigma}_h\|_0^2 \gtrsim \|\boldsymbol{s}_h\|_1^2 + 
    \|\boldsymbol{\sigma}_h\|_1^2 +
    \|p_h\|_1^2.
 \end{aligned}
\end{equation}

\item \textit{Discrete inf-sup condition.}
The discrete stability condition for $({\rm T}_h,{\rm W}_h)$,
\begin{equation}
\inf_{\boldsymbol{\mathcal{V}}_h \in \mathrm{W}_h} \sup_{\boldsymbol{\mathcal{S}}_h \in \mathrm{T}_h}
\frac{-(\boldsymbol{v}_h, \nabla \cdot \boldsymbol{\sigma}_h) - (\nabla p_h, \boldsymbol{v}_h) - (\gamma_h, \nabla \cdot \boldsymbol{s}_h)}
{\left(\|\boldsymbol{\sigma}_h\|^2_{1} + \|\boldsymbol{s}_h\|_1^2 + \|p_h\|_{1}^2\right)^{1/2} \left(\|\boldsymbol{v}_h\|_0^2 + \|\gamma_h\|_0^2\right)^{1/2}} \gtrsim 1
\end{equation}
is satisfied provided the decoupled conditions hold for all $\boldsymbol{v}_h\in {\rm U_h}$, $ \gamma_h \in \Theta_h$:
\begin{equation}
\label{eq:SufficientCondition_infsup}
\tag{U-2}
\begin{aligned}
\sup_{\boldsymbol{\sigma}_h\in \Sigma_h }\frac
{-
(\boldsymbol{v}_h,\nabla \cdot\boldsymbol{\sigma}_h)}{\|\boldsymbol{\sigma}_h\|_{1}}
\gtrsim \|\boldsymbol{v}_h\|_0,
\qquad 
\sup_{\boldsymbol{s}_h\in {\rm S}_h}\frac
{-
(\gamma_h, \nabla \cdot \boldsymbol{s}_h)}{
\|\boldsymbol{s}_h\|_1}
\gtrsim  \|\gamma_h\|_0.
\end{aligned}
\end{equation}

These decoupled conditions ensure the existence of non-trivial $(\boldsymbol{\sigma}_h, \boldsymbol{s}_h) \in \Sigma_h \times \mathrm{S}_h$ satisfying the following stability estimates with constants $C_1, C_2 > 0$:
$$
-
(\boldsymbol{v}_h,\nabla \cdot{\boldsymbol{\sigma}_h})
\geq
C_1\|\boldsymbol{v}_h\|_0\|{\boldsymbol{\sigma}_h}\|_1,
\quad
-
(\gamma_h, \nabla \cdot{\boldsymbol{s}_h})
\geq
C_2\|\gamma_h\|_0\|{\boldsymbol{s}}_h\|_1.$$
Then constructing ${\cal{S}}_h = (\boldsymbol{\sigma}_h,\boldsymbol{s}_h,0) \in {\rm T}_h$
and scaling the components such that $\|\boldsymbol{\sigma}_h\|_{1}=C_1\|\boldsymbol{v}_{h}\|_0$ and $\|\boldsymbol{s}_h\|_1=C_2\|\gamma_h\|_{0}$
yield
\begin{equation}
\begin{aligned}
\sup_{\boldsymbol{\mathcal{S}_h}\in \mathrm{T}_h} \frac{\mathcal B(\boldsymbol{\mathcal{S}}_h,\boldsymbol{\mathcal V}_h)}{\|\boldsymbol{\mathcal{S}}_h\|_{\mathrm{T}}}
\geq
{\sqrt{\|{\boldsymbol{\sigma}}_h\|^2_{1}+
\|{\boldsymbol{s}}_h\|_1^2}} 
\gtrsim 
{\sqrt{\|\boldsymbol{v}_h\|_0^2 + \|\gamma_h\|_0^2}}
=\|\boldsymbol{\mathcal{V}}_h\|_{\mathrm{W}},
\end{aligned}
\end{equation}
thereby verifying the joint discrete inf-sup condition.  
\end{enumerate}

\subsection{Discrete inf-sup condition}
For the relatively simple pair $(\mathrm{S}_h, \Theta_h)$ from decoupled inf-sup conditions \cref{eq:SufficientCondition_infsup},
the stability of this $\mathbb P_d$-$\mathbb P_{d-1}$ pair
relies on the discrete inf-sup condition 
for the Stokes equations with pure Neumann boundary conditions.
This is usually viewed as a natural corollary of the Dirichlet case. 
However, as we have not located an explicit reference for our specific formulation, we briefly sketch the proof, employing Verfürth's argument.
\begin{theorem}
\label{thm:infsup_vector}
The finite element spaces $(\mathrm{S}_h, \Theta_h)$ satisfy the discrete inf-sup condition,
\begin{equation}
\sup_{\boldsymbol{s}_h\in \mathrm{S}_h}\frac{{
(\gamma_h, \nabla \cdot \boldsymbol{s}_h)}}{\|\boldsymbol{s}_h\|_1}\gtrsim \|\gamma_h\|_0,\quad\forall \gamma_h \in {\Theta}_h.
\end{equation}
\end{theorem}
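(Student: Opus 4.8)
The plan is to separate the constant pressure mode, which the $\mathbb P_d$ velocity space handles trivially, and to reduce the rest to the classical Taylor--Hood inf-sup with homogeneous velocity data. Write $\gamma_h=\bar\gamma_h+\gamma_h^0$, where $\bar\gamma_h:=|\Omega|^{-1}\int_\Omega\gamma_h$ is constant and $\gamma_h^0\in\Theta_h\cap L^2_0(\Omega)$. For the constant mode, the affine field $\boldsymbol s_h^1(\mathbf x):=\tfrac{\bar\gamma_h}{d}(\mathbf x-\mathbf x_0)$ (for any fixed $\mathbf x_0$) lies in $\mathcal P_1(\Omega;\mathbb R^d)\subset\mathrm S_h$, has $\nabla\cdot\boldsymbol s_h^1=\bar\gamma_h$ (a constant, hence $L^2$-orthogonal to $\gamma_h^0$) and $\|\boldsymbol s_h^1\|_1\lesssim\|\bar\gamma_h\|_0$, so that $(\gamma_h,\nabla\cdot\boldsymbol s_h^1)=\|\bar\gamma_h\|_0^2$. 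For the zero-mean mode I would invoke the discrete inf-sup for the $\mathbb P_d$-$\mathbb P_{d-1}$ pair with homogeneous boundary data, which yields $\boldsymbol s_h^0\in\mathrm S_h\cap H^1_0(\Omega;\mathbb R^d)$ with $(\gamma_h^0,\nabla\cdot\boldsymbol s_h^0)\gtrsim\|\gamma_h^0\|_0^2$ and $\|\boldsymbol s_h^0\|_1\lesssim\|\gamma_h^0\|_0$; since $\boldsymbol s_h^0$ has vanishing normal trace, $(\bar\gamma_h,\nabla\cdot\boldsymbol s_h^0)=0$. Testing with a fixed positive combination $\boldsymbol s_h$ of $\boldsymbol s_h^0$ and $\boldsymbol s_h^1$ and using the Pythagorean identity $\|\gamma_h\|_0^2=\|\bar\gamma_h\|_0^2+\|\gamma_h^0\|_0^2$ then gives $(\gamma_h,\nabla\cdot\boldsymbol s_h)\gtrsim\|\gamma_h\|_0^2$ with $\|\boldsymbol s_h\|_1\lesssim\|\gamma_h\|_0$, which is the assertion. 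Thus everything reduces to the homogeneous-data Taylor--Hood inf-sup, which I would establish by Verfürth's two-scale argument.

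By Fortin's lemma it suffices to construct a uniformly $H^1$-bounded operator $\Pi_h:H^1_0(\Omega;\mathbb R^d)\to\mathrm S_h\cap H^1_0$ with $(\nabla\cdot(\boldsymbol v-\Pi_h\boldsymbol v),q_h)=0$ for all $q_h\in\Theta_h$; the continuous ingredient is the Bogovskii right inverse of $\nabla\cdot:H^1_0(\Omega;\mathbb R^d)\to L^2_0(\Omega)$. Fix $q_h\in\Theta_h\cap L^2_0$, set $S:=\sup\{(q_h,\nabla\cdot\boldsymbol s_h)/\|\boldsymbol s_h\|_1:\boldsymbol s_h\in\mathrm S_h\cap H^1_0\}$, pick $\boldsymbol v\in H^1_0$ with $\nabla\cdot\boldsymbol v=q_h$ and $\|\boldsymbol v\|_1\lesssim\|q_h\|_0$, and let $R_h$ be a Scott--Zhang quasi-interpolant into $\mathrm S_h\cap H^1_0$ that is $H^1$-stable with $\|\boldsymbol v-R_h\boldsymbol v\|_{0,K}\lesssim h_K|\boldsymbol v|_{1,\omega_K}$. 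An elementwise integration by parts --- in which the interior-face flux terms cancel pairwise because $q_h$ is continuous, and the boundary-face terms vanish since $\boldsymbol v-R_h\boldsymbol v\in H^1_0$ --- then gives
\begin{equation*}
\|q_h\|_0^2=(q_h,\nabla\cdot\boldsymbol v)=(q_h,\nabla\cdot R_h\boldsymbol v)-(\boldsymbol v-R_h\boldsymbol v,\nabla_h q_h),
\end{equation*}
with $\nabla_h$ the broken gradient. Bounding the first term by $S\,\|R_h\boldsymbol v\|_1\lesssim S\|q_h\|_0$ and the second, via the local approximation bound, the elementwise inverse inequality and Cauchy--Schwarz, by $\|q_h\|_0\,\|h\nabla_h q_h\|_0$, I obtain the coarse-scale estimate $\|q_h\|_0\lesssim S+\|h\nabla_h q_h\|_0$.

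The remaining step $\|h\nabla_h q_h\|_0\lesssim S$ is the one I expect to be the genuine obstacle. Here $\nabla_h q_h$ is elementwise a polynomial of degree $\le d-2$, and since $\mathbb P_d$ carries no interior degrees of freedom on a simplex it cannot be tested element by element; instead I would assemble a discrete velocity $\boldsymbol b_h\in\mathrm S_h\cap H^1_0$ from the interior edge-bubble functions of $\mathbb P_d$ (and, for $d=3$, the interior face-bubble functions), choosing on the patch $\omega_e$ of each interior edge/face $e$ a bubble amplitude proportional to a weighted mean of $\nabla q_h$ over $\omega_e$, so that the flux of $\boldsymbol b_h$ across $e$ drops out by continuity of $q_h$ and one obtains $(\nabla\cdot\boldsymbol b_h,q_h)\gtrsim\|h\nabla_h q_h\|_0^2$ together with $\|\boldsymbol b_h\|_1\lesssim\|h\nabla_h q_h\|_0$. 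The delicate point is the passage from these patchwise means back to $\|h\nabla_h q_h\|_0$: this is a discrete Poincar\'{e}/Korn-type inequality on the triangulation, using that $q_h$ is continuous so that $\nabla q_h$ jumps only in the normal direction across faces, and it is where shape-regularity and quasi-uniformity of the mesh are used (and, in $d=3$, the mild mesh regularity assumed in \cref{sec:mfem}). Testing with $\boldsymbol b_h$ then yields $S\gtrsim\|h\nabla_h q_h\|_0$; combined with the coarse-scale estimate this gives the homogeneous-data Taylor--Hood inf-sup, and hence the theorem.
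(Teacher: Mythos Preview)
Your proposal is correct and takes essentially the same approach as the paper: both split off the constant pressure mode via the linear field $\mathbf{x}/d$ and handle the zero-mean remainder by Verf\"urth's two-scale argument. The only differences are organizational---the paper folds the constant-mode contribution into the coarse-scale step rather than separating it out first---and that the paper simply cites the Dirichlet Taylor--Hood stability for the fine-scale estimate where you sketch the edge-bubble construction.
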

\begin{proof}
Decompose the temperature space according to $\Theta_h = \Theta_{h,0} \oplus \mathbb R$ such that $\Theta_{h,0}$ is zero-mean space. 
For any $\gamma_h \in \Theta_h$, we write $\gamma_h = \gamma_{h,0} + \hat \gamma$ correspondingly.

We first show that for $\gamma_h \in \Theta_h$, there exists $\boldsymbol{r}_h\in\rm S_h$ 
with $\|\boldsymbol{r}_h\|_1 \lesssim \|\gamma_h\|_0$
and 
\begin{equation} 
\label{eq:infsup_verfurth1} 
\left(\gamma_h, \nabla \cdot \boldsymbol{r}_h\right) \gtrsim \left\|\gamma_h\right\|_0^2 - h\left\|\gamma_h\right\|_0 \left\|\nabla \gamma_h\right\|_0\gtrsim 
\left\|\boldsymbol{r}_h\right\|_1 
\left(\left\|\gamma_h\right\|_0 - h\left\|\nabla\gamma_h\right\|_0\right).
\end{equation}

For the zero-mean component $\gamma_{h,0}$,
there exists $\boldsymbol{r}_0 \in H^1_0(\Omega;\mathbb R^d)$ such that $\nabla \cdot \boldsymbol{r}_0 = \gamma_{h,0}$ with $\|\boldsymbol{r}_0\|_1 \lesssim \|\gamma_{h,0}\|_0$,
which follows from the existence of a right inverse for divergence (see, e.g., \cite[Theorem IV.3.1]{boyer2012mathematical}).
Then denote by $\boldsymbol{r}_{h,0}$ the Scott-Zhang interpolation of $\boldsymbol{r}_0$ into ${\rm S}_h \cap H^1_0$ (see \cite{scott1990finite}).
And for the constant term, let $\hat{\boldsymbol{r}}=\hat{\gamma}{\bf x}/d$ with $\nabla\cdot \hat{\boldsymbol{r}} = \hat\gamma$.
Therefore, picking $\boldsymbol{r}_h := \boldsymbol{r}_{h,0} + \hat{\boldsymbol{r}}$ and applying the approximation property of the Scott-Zhang interpolation 
$\|\boldsymbol{r}_0 - \boldsymbol{r}_{h,0}\|_0\lesssim h\|\mathbf{r}_{0}\|_1$
yield \cref{eq:infsup_verfurth1}.

By the same arguments as given in \cite{brezzi1991stability, stenberg1987on3D}, the stability of Taylor-Hood elements under Dirichlet boundary conditions ensures there exists $\boldsymbol{s}_{h,0}\in (\mathrm{S}_h\cap H^1_0)$ such that 
\begin{equation} 
\label{eq:infsup_verfurth2} 
\left(\gamma_{h},\nabla\cdot\boldsymbol{s}_{h,0}\right) \gtrsim h\left\|\boldsymbol{s}_{h,0}\right\|_1\left\|\nabla\gamma_{h}\right\|_{0}.
\end{equation}
Combining \cref{eq:infsup_verfurth1} and \cref{eq:infsup_verfurth2} completes the proof.
\end{proof}

As for the $\mathbb{P}_d^b$-$\mathbb P_{d}$ pair $(\Sigma_h,{\rm U}_h)$, previous numerical experiments (e.g., \cite[Section 3]{westerkamp2019finite}) showed that the standard Taylor-Hood element pair doesn't work well and some penalty terms are needed to ensure the numerical stability. Here, our approach is to enrich Lagrange element spaces with volume bubble functions as a conforming stabilization.
The main issue is then to verify for any $\boldsymbol{u}_{h} \in {\rm U}_h$,
\begin{equation*}
   \sup_{\boldsymbol{\sigma}_h \in \Sigma_h}\frac{(\nabla\cdot \boldsymbol{\sigma}_h,\boldsymbol{u}_{h})}{\|\boldsymbol{\sigma}_h\|_{1}} \gtrsim \|\boldsymbol{u}_{h}\|_{0}.
\end{equation*}

While one might ideally define $\boldsymbol{\sigma}_h = (\nabla\cdot)^{-1}(\boldsymbol{u}_h)$ while preserving $H^1$ stability, such a construction is difficult to realize in the discrete setting. 
We therefore pursue an alternative strategy, inspired by the proof of \cref{thm:infsup_vector} and centered on orthogonal decompositions \cref{eq:L2_orthgonal_decomposition}. 
Our first step involves demonstrating the existence of a right inverse for the divergence operator, restricted to the orthogonal complement of its adjoint's kernel $\mathcal K^{\perp}$.

\begin{theorem}[Existence of right inverse for the divergence operator]
\label{thm:inv_divergence_new_proof}
For $\boldsymbol{u}\in \mathcal{K}^{\perp}$,
there exists $\boldsymbol{\tau} \in H^1_0(\Omega;{\mathbb T})$ such that
\begin{equation}
    \label{eq:inv_divergence_for_Vh_new_proof}
    -\nabla\cdot\boldsymbol{\tau}=\boldsymbol{u},\quad \|\boldsymbol{\tau}\|_{1}\lesssim\|\boldsymbol{u}\|_0.
\end{equation}

\end{theorem}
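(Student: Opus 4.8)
The plan is to deduce \cref{eq:inv_divergence_for_Vh_new_proof} from the fact that the divergence operator $-\nabla\cdot : H^1_0(\Omega;\mathbb{T}) \to L^2(\Omega;\mathbb{R}^d)$ maps \emph{onto} $\mathcal{K}^{\perp}$ with a bounded right inverse. By \cref{lem:adjoint_div_grad} this operator is the adjoint of $\stfD$, so by a closed-range / inf-sup argument the whole statement reduces to one quantitative estimate: a Korn-type inequality for $\stfD$ on the orthogonal complement of its kernel. I would organize the argument in three steps.

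\emph{Steps 1--2: a Korn-type inequality on $\mathcal{K}^{\perp}$.} First I would record the negative-norm (generalized) Korn estimate
\begin{equation*}
\|\boldsymbol{v}\|_0 \lesssim \|\boldsymbol{v}\|_{-1} + \|\stfD\boldsymbol{v}\|_{-1}, \qquad \forall\, \boldsymbol{v}\in L^2(\Omega;\mathbb{R}^d).
\end{equation*}
For $d=2$, where $\stfD = \mathrm{sym}\,\nabla$, this is the classical second Korn inequality in its $H^{-1}$ form, following from the Ne\v{c}as inequality and the reconstruction identity $\partial_k\partial_l v_i = \partial_l(\mathrm{sym}\,\nabla\boldsymbol{v})_{ik} + \partial_k(\mathrm{sym}\,\nabla\boldsymbol{v})_{il} - \partial_i(\mathrm{sym}\,\nabla\boldsymbol{v})_{kl}$ (see, e.g., \cite{ciarlet2010onKorn}); for $d=3$, where $\stfD = \mathrm{stf}\,\nabla$, it is the generalized \emph{conformal} Korn inequality, valid in dimension at least three, whose proof exploits the algebraic structure of conformal Killing fields, and for which I would rely on \cite{schirra2012new}. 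Then, since $\mathcal{K}=\ker(\stfD|_{L^2})$ is finite-dimensional by \cref{lem:characteristic_kernel} and $\stfD$ is injective on $\mathcal{K}^{\perp}$, the Peetre--Tartar lemma together with the compact embedding $L^2(\Omega;\mathbb{R}^d)\hookrightarrow H^{-1}(\Omega;\mathbb{R}^d)$ upgrades this to
\begin{equation*}
\|\boldsymbol{v}\|_0 \lesssim \|\stfD\boldsymbol{v}\|_{-1}, \qquad \forall\, \boldsymbol{v}\in\mathcal{K}^{\perp}.
\end{equation*}
Equivalently, one argues by contradiction: a normalized sequence $\boldsymbol{v}_n\in\mathcal{K}^{\perp}$ with $\|\stfD\boldsymbol{v}_n\|_{-1}\to 0$ has an $H^{-1}$-convergent subsequence, the first estimate applied to $\boldsymbol{v}_n-\boldsymbol{v}_m$ makes it Cauchy in $L^2$, and its limit lies in $\mathcal{K}\cap\mathcal{K}^{\perp}=\{\boldsymbol{0}\}$.

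\emph{Step 3: inf-sup and exact solvability.} Using the identity from the proof of \cref{lem:adjoint_div_grad}, $\langle \stfD\boldsymbol{v},\boldsymbol{\tau}\rangle_{H^{-1},H_0^1} = -(\boldsymbol{v},\nabla\cdot\boldsymbol{\tau})$ for $\boldsymbol{v}\in L^2(\Omega;\mathbb R^d)$ and $\boldsymbol{\tau}\in H^1_0(\Omega;\mathbb{T})$, Step 2 rewrites as the inf-sup condition
\begin{equation*}
\sup_{\boldsymbol{\tau}\in H^1_0(\Omega;\mathbb{T})} \frac{-(\nabla\cdot\boldsymbol{\tau},\boldsymbol{v})}{\|\boldsymbol{\tau}\|_1} = \|\stfD\boldsymbol{v}\|_{-1} \gtrsim \|\boldsymbol{v}\|_0, \qquad \forall\, \boldsymbol{v}\in\mathcal{K}^{\perp}.
\end{equation*}
By the standard ``inf-sup implies surjectivity'' principle (as in \cite{brezzi1974existence}), the operator $B:H^1_0(\Omega;\mathbb{T})\to(\mathcal{K}^{\perp})'$ given by $\langle B\boldsymbol{\tau},\boldsymbol{v}\rangle = -(\nabla\cdot\boldsymbol{\tau},\boldsymbol{v})$ is surjective and admits a bounded right inverse, with norm controlled by the reciprocal of the Korn constant above. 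For $\boldsymbol{u}\in\mathcal{K}^{\perp}$, identifying $(\mathcal{K}^{\perp})'$ with $\mathcal{K}^{\perp}$ and applying $B^{-1}$ to $\boldsymbol{v}\mapsto(\boldsymbol{u},\boldsymbol{v})$ yields $\boldsymbol{\tau}\in H^1_0(\Omega;\mathbb{T})$ with $\|\boldsymbol{\tau}\|_1\lesssim\|\boldsymbol{u}\|_0$ and $-(\nabla\cdot\boldsymbol{\tau},\boldsymbol{v}) = (\boldsymbol{u},\boldsymbol{v})$ for all $\boldsymbol{v}\in\mathcal{K}^{\perp}$. To pass to the pointwise identity, note that for $\boldsymbol{v}\in\mathcal{K}$ the same identity gives $(\nabla\cdot\boldsymbol{\tau},\boldsymbol{v}) = -\langle\stfD\boldsymbol{v},\boldsymbol{\tau}\rangle = 0$, so $-\nabla\cdot\boldsymbol{\tau}\in\mathcal{K}^{\perp}$; since $-\nabla\cdot\boldsymbol{\tau}$ and $\boldsymbol{u}$ both lie in $\mathcal{K}^{\perp}$ and agree against every test function there, they coincide, giving \cref{eq:inv_divergence_for_Vh_new_proof}.

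The main obstacle is Step 1 in the three-dimensional case. The negative-norm conformal Korn inequality is substantially more delicate than the classical Korn inequality: the second-derivative reconstruction identity available for $\mathrm{sym}\,\nabla$ has no verbatim analogue for $\mathrm{stf}\,\nabla$, and the inequality genuinely fails for $d=2$, so its proof must use the full classification of conformal Killing fields. I would therefore treat this step by citation; Steps 2 and 3 are then routine functional analysis.
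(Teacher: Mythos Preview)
Your proposal is correct and follows essentially the same approach as the paper: the paper likewise reduces the statement to the negative-norm Korn inequality $\|\boldsymbol{v}\|_0 \lesssim \|\stfD\boldsymbol{v}\|_{-1} + \|\boldsymbol{v}\|_{-1}$ (classical for $d=2$, proved in an appendix via $\mathbb{C}$-ellipticity for $d=3$), upgrades it on $\mathcal{K}^{\perp}$ by exactly the compactness/contradiction argument you sketch, and then concludes via the closed range theorem plus the open mapping theorem, which is the same content as your inf-sup formulation. Your Step~3 is in fact slightly more explicit than the paper in checking that the weak identity on $\mathcal{K}^{\perp}$ yields the pointwise equation $-\nabla\cdot\boldsymbol{\tau}=\boldsymbol{u}$.
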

\begin{proof}
For the $d=2$ case, we show the divergence operator is a surjective map from $H^1_0(\Omega; {\mathbb{R}^{2 \times 2}_{\rm sym}})$ to $\mathbf{RM}^{\perp}$.
By \cref{lem:adjoint_div_grad},
the adjoint of $-\operatorname{div}$ is ${\rm sym~\nabla}:L^2(\Omega;\mathbb R^2)\to H^{-1}(\Omega;{\mathbb{R}^{2 \times 2}_{\rm sym}})$,
and the kernel of the adjoint is the space of rigid motions, ${\bf RM}$.
According to the closed range theorem,
if the range of the adjoint operator ${\rm sym~\nabla}$ is closed in $H^{-1}(\Omega;\mathbb{R}^{2 \times 2}_{\rm sym})$,
then the range of the primal operator $-\operatorname{div}$ equals the orthogonal complement of the adjoint's kernel, that is
\begin{equation}
\operatorname{Range}(\operatorname{div}) = (\ker({\rm sym~\nabla}))^\perp = {\bf RM}^{\perp}.
\end{equation}
To show $\operatorname{Range}({{\rm sym~\nabla}})$ is closed, it suffices to verify that ${\rm sym~\nabla}$ is bounded below on the orthogonal complement of its kernel, that is, for all $\boldsymbol{v} \in \mathbf{RM}^{\perp} \subseteq L^2(\Omega;\mathbb R^2)$,
\begin{equation} \label{eq:korn_neg}
\|{\rm sym~\nabla} \boldsymbol{v}\|_{-1} \gtrsim \|\boldsymbol{v}\|_0.
\end{equation}
The above inequality follows from Korn's inequality of negative norm (see e.g., \cite[Theorem 3.2]{duvant2012inequalities}), which states:
\begin{align}
\label{eq:korn_neg_origin_2D}
\|\boldsymbol{v}\|_0 \lesssim \|{\rm sym~\nabla}\boldsymbol{v}\|_{-1} + \|\boldsymbol{v}\|_{-1}, \quad \forall \boldsymbol{v} \in L^2(\Omega;\mathbb R^2).
\end{align}
Assume for the sake of contradiction that \cref{eq:korn_neg} fails. Then there exists a sequence $\{\boldsymbol{v}_k\} \subset \mathbf{RM}^{\perp}$ with $\|\boldsymbol{v}_k\|_0 = 1$ such that $\|{\rm sym~\nabla} \boldsymbol{v}_k\|_{-1} \to 0$.
Since the embedding $L^2(\Omega) \hookrightarrow H^{-1}(\Omega)$ is compact on bounded Lipschitz domain, there exists a subsequence convergent under $H^{-1}$ norm, which means $$\|\boldsymbol{v}_{k_n}-\boldsymbol{v}_{k_m}\|_{-1}\to 0$$ 
as $n,m\to\infty$. 
By the Korn's inequality \cref{eq:korn_neg_origin_2D} applied to $\boldsymbol{v}_{k_n}-\boldsymbol{v}_{k_m}$, such subsequence is also Cauchy sequence under $L^2$ norm. 
Then let $\boldsymbol{v}^*$ be the limit under the $L^2$ norm.
We have $\|\boldsymbol{v}^*\|_0=1$, $\boldsymbol{v}^* \in \mathbf{RM}^\perp$, and continuity implies ${\rm sym~\nabla}\boldsymbol{v}^* = 0$. Thus $\boldsymbol{v}^* \in \mathbf{RM}$, which implies $\boldsymbol{v}^* = 0$, a contradiction.

Hence, the range of ${\rm sym~\nabla}$ is closed and the operator $-{\rm div}: H_0^1(\Omega;{\mathbb{R}^{2 \times 2}_{\rm sym}}) \to \mathbf{RM}^{\perp}$ is surjective. Since the open mapping theorem guarantees the existence of a continuous right inverse,
for any $\boldsymbol{u} \in \mathbf{RM}^{\perp}$, consequently there exists $\boldsymbol{\tau} \in H_0^1(\Omega;{\mathbb{R}^{2 \times 2}_{\rm sym}})$ satisfying $-\nabla \cdot \boldsymbol{\tau} = \boldsymbol{u}$ such that
$
\|\boldsymbol{\tau}\|_{1} \lesssim \|\boldsymbol{u}\|_0.
$

The proof for the $d=3$ case follows an identical logic to that for $d=2$ with $\bf RM$ replaced by $\bf CK$ and relies on a generalized Korn's inequality of negative norm,
\begin{align*}
\|\boldsymbol{v}\|_0 \lesssim \|{\rm stf~\nabla}\boldsymbol{v}\|_{-1} + \|\boldsymbol{v}\|_{-1}.    
\end{align*}
A detailed proof for the above inequality is provided in \cref{thm:Korn_inequality_stfD_negative_norm}.
\end{proof}

Therefore, combining 
\cref{lem:subspace_property,thm:inv_divergence_new_proof}
yields: 
\begin{theorem}
\label{thm:inf_sup_for_H10_and_U0}
For $\boldsymbol{u}_h \in {\rm U}_{h,0}$,
there exists $\boldsymbol{\tau}_h\in H_0^1(\Omega;\mathbb{T})$ such that 
\begin{equation}
-\nabla\cdot\boldsymbol{\tau}_h=\boldsymbol{u}_h,\quad \|\boldsymbol{\tau}_h\|_{1}\lesssim\|\boldsymbol{u}_h\|_0.
\end{equation}
\end{theorem}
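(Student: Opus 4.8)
The plan is to obtain this statement as an immediate specialization of \cref{thm:inv_divergence_new_proof}. The only point that needs verification is that every $\boldsymbol{u}_h\in\mathrm{U}_{h,0}$ is an admissible datum for that theorem, i.e.\ that $\boldsymbol{u}_h\in\mathcal{K}^\perp$. This is exactly what \cref{lem:subspace_property} records: since $\mathrm{U}_{h,0}=\mathcal{K}^\perp\cap\mathrm{U}_h$ is a closed subspace of $\mathcal{K}^\perp$, the inclusion $\mathrm{U}_{h,0}\subseteq\mathcal{K}^\perp$ holds, so any discrete field in $\mathrm{U}_{h,0}$ already lies in the orthogonal complement of the kernel of $\stfD$.

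Concretely, I would first fix $\boldsymbol{u}_h\in\mathrm{U}_{h,0}$ and use $\mathrm{U}_{h,0}\subseteq\mathcal{K}^\perp$ to regard $\boldsymbol{u}_h$ as the datum $\boldsymbol{u}\in\mathcal{K}^\perp$ appearing in \cref{thm:inv_divergence_new_proof}. Applying that theorem then produces $\boldsymbol{\tau}\in H^1_0(\Omega;\mathbb{T})$ with $-\nabla\cdot\boldsymbol{\tau}=\boldsymbol{u}_h$ and $\|\boldsymbol{\tau}\|_1\lesssim\|\boldsymbol{u}_h\|_0$, the hidden constant being the operator norm of the continuous right inverse of $-\nabla\cdot\colon H^1_0(\Omega;\mathbb{T})\to\mathcal{K}^\perp$ furnished by the open mapping theorem in the proof of that result. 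Setting $\boldsymbol{\tau}_h:=\boldsymbol{\tau}$ completes the argument. I would emphasize that $\boldsymbol{\tau}_h$ is not claimed to be a finite element function at this stage; the subscript only flags its dependence on the discrete datum $\boldsymbol{u}_h$, and a genuinely discrete inf-sup partner in $\Sigma_{h,0}$ will subsequently be obtained by post-composing with the $H^1$-stable, trace-preserving interpolation operator $\mathcal{I}_h$ of \cref{Lem:interp_op} (whose $B$-compatibility over $\mathrm{U}_{h,0}$ has already been noted).

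Since all of the analytic machinery — the Korn-type inequality of negative norm (\cref{thm:Korn_inequality_stfD_negative_norm} for $d=3$), the closed range theorem, and the bounded-below argument on $\mathcal{K}^\perp$ — has already been absorbed into \cref{thm:inv_divergence_new_proof}, there is essentially no remaining obstacle here: the proof is a two-line corollary. The only point deserving a moment's care is purely bookkeeping, namely that the orthogonal complement $\mathcal{K}^\perp$ is taken within the same ambient space $L^2(\Omega;\mathbb{R}^d)$ in both \cref{lem:subspace_property} and \cref{thm:inv_divergence_new_proof}, which it is by construction in \cref{eq:L2_orthgonal_decomposition}.
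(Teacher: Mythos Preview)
Your proposal is correct and is exactly the approach the paper takes: the theorem is stated there as an immediate consequence of combining \cref{lem:subspace_property} (giving $\mathrm{U}_{h,0}\subseteq\mathcal{K}^\perp$) with \cref{thm:inv_divergence_new_proof}. Your additional remarks about the nature of $\boldsymbol{\tau}_h$ and the subsequent role of $\mathcal{I}_h$ are accurate context but not needed for the proof itself.
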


Now we turn to our case and derive the discrete inf-sup condition for the $\mathbb{P}_d^b$-$\mathbb P_{d}$ pair $({\Sigma}_h,{\rm U}_h)$. The proof is also based on a useful lemma related to the trivial right inverse of divergence in $\Sigma_h$ acting on $\mathcal K$, another component of ${\rm U}_h$.

\begin{lemma}
\label{lem:inv_div_for_mathcal_K}
    For $\hat{\boldsymbol{u}}\in\mathcal K$, there exists $\hat{\boldsymbol{\sigma}} \in \Sigma_{h}$
such that $\hat{\boldsymbol{u}} = \nabla \cdot {\hat{\boldsymbol{\sigma}}}$, $\|\hat{\boldsymbol{\sigma}}\|_{1} \lesssim \|\hat{\boldsymbol{u}}\|_{0}$.
\end{lemma}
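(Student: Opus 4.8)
The goal is to produce, for each $\hat{\boldsymbol u}\in\mathcal K$, a tensor field $\hat{\boldsymbol\sigma}\in\Sigma_h$ with $\nabla\cdot\hat{\boldsymbol\sigma}=\hat{\boldsymbol u}$ and $\|\hat{\boldsymbol\sigma}\|_1\lesssim\|\hat{\boldsymbol u}\|_0$. The word \emph{trivial} in the statement suggests we want a completely explicit, piecewise-polynomial right inverse, exploiting that $\mathcal K$ consists of low-degree polynomials (degree $\le d-1$, by \cref{lem:characteristic_kernel}: $\mathbf{RM}\subsetneq\mathcal P_1$ for $d=2$, $\mathbf{CK}\subsetneq\mathcal P_2$ for $d=3$). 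The plan is: first find, on a reference element (or directly on each $K\in\mathcal T_h$), a polynomial tensor $\hat{\boldsymbol\sigma}_K$ of degree $\le d$ with values in $\mathbb T$ whose divergence equals $\hat{\boldsymbol u}|_K$; then check this local construction can be chosen so the pieces glue into an $H^1(\Omega;\mathbb T)$ function lying in $\Sigma_h$; finally establish the norm bound by a scaling/equivalence-of-norms argument on the finite-dimensional space $\mathcal K$.

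\textbf{Step 1 (local right inverse into $\mathscr B_K^d$ or $\mathcal P_d$).} Fix $K\in\mathcal T_h$. Since $\hat{\boldsymbol u}\in\mathcal K\subset\mathcal P_{d-1}(K;\mathbb R^d)$ and $\hat{\boldsymbol u}=\stfD^{\,-1}$-preimage target, I want $\hat{\boldsymbol\sigma}|_K\in\mathcal P_d(K;\mathbb T)\oplus\mathscr B_K^d$ with $\nabla\cdot\hat{\boldsymbol\sigma}|_K=\hat{\boldsymbol u}|_K$. The cleanest route is to use the last block of DoFs in \cref{eq:Sigmah_dofs}, namely the interior moments against $\stfD\mathcal P_d(K;\mathbb R^d)$, which is exactly the image under $\mathcal E$ of the bubble-enriched directions. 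The \emph{necessity remark} after \cref{eq:FESpaces} already asserts that $m\ge d$ is chosen \emph{precisely} so that a trivial right inverse of the divergence exists in $\Sigma_h^m$ for $\ker(\mathcal E)=\mathcal K$; so the design principle is that for $\hat{\boldsymbol u}\in\mathcal K$ one solves $\nabla\cdot(b_K\,\mathcal E(\boldsymbol p))=\hat{\boldsymbol u}$ on $K$. Concretely, I would observe that $\nabla\cdot:\,\mathscr B_K^d\to\mathcal P_{?}(K;\mathbb R^d)$ and show the relevant image contains $\mathcal K|_K$; alternatively, since $b_K$ vanishes on $\partial K$ together with the fact that $\hat{\boldsymbol\sigma}$ supported in $\mathscr B_K^d$ has zero trace on every face, the gluing is automatic and $\hat{\boldsymbol\sigma}\in\Sigma_{h,0}\subset\Sigma_h$. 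The key algebraic fact to verify is that the linear map $\boldsymbol p\mapsto\nabla\cdot(b_K\mathcal E(\boldsymbol p))$ from $\mathcal P_d(K;\mathbb R^d)$ onto its image is surjective onto a space containing the restriction of $\mathcal K$; this is where the inclusion $\mathcal P_{d-2}\subsetneq\ker(\mathcal E)\subsetneq\mathcal P_{d-1}$ and dimension counting enter.

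\textbf{Step 2 (norm bound via finite dimensionality).} Once $\hat{\boldsymbol\sigma}$ is built elementwise from $\hat{\boldsymbol u}|_K$ by a fixed linear recipe, $\|\hat{\boldsymbol\sigma}\|_{1,K}\lesssim \|\hat{\boldsymbol u}\|_{0,K}$ follows on the reference element by equivalence of all norms on the finite-dimensional spaces involved, and then on $K$ by the standard affine scaling estimates (using shape-regularity and quasi-uniformity, \cite{brenner2008mathematical}); the mesh-size powers on the two sides match because both $\nabla\cdot$ and the inverse construction carry the natural scaling. Summing over $K\in\mathcal T_h$ and using $\|\hat{\boldsymbol u}\|_0^2=\sum_K\|\hat{\boldsymbol u}\|_{0,K}^2$ gives $\|\hat{\boldsymbol\sigma}\|_1\lesssim\|\hat{\boldsymbol u}\|_0$. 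Actually, since $\mathcal K$ is itself a \emph{global} finite-dimensional space of fixed dimension independent of $h$, an even slicker argument is available: the whole map $\hat{\boldsymbol u}\mapsto\hat{\boldsymbol\sigma}$ is linear on the fixed finite-dimensional space $\mathcal K$, so once well-definedness into $\Sigma_h$ is established, boundedness is free — but one must still track $h$-dependence of the constant, which again comes from the elementwise scaling.

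\textbf{Main obstacle.} The crux is Step 1: verifying that the bubble-enriched local space $\mathcal P_d(K;\mathbb T)\oplus\mathscr B_K^d$ genuinely admits a divergence right inverse onto $\mathcal K|_K$ with the trace-free/symmetric constraints respected — i.e., that the design choice $m\ge d$ really does what the remark claims. This requires a careful look at the map $\mathcal E$ and its kernel on polynomials and at which tensor directions the bubble enrichment supplies; the trace-free condition in $\mathbb T$ for $d=3$ makes the dimension count less transparent than for symmetric tensors. I expect this to be handled by reducing to a reference-element computation (possibly citing \cite{an2024decoupled} or \cite{an2024decoupled}'s unisolvence result \cite[Lemma 2.1]{an2024decoupled}), after which the gluing (trace-free bubbles) and the norm estimate are routine.
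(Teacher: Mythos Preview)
Your elementwise bubble approach has a fatal obstruction. If $\hat{\boldsymbol\sigma}|_K=b_K\,\stfD(\boldsymbol p)$ (or any element of $\mathscr B_K^d\subset H^1_0(K;\mathbb T)$), then for every $\boldsymbol v\in\mathcal K$, \cref{lem:adjoint_div_grad} gives
\[
(\nabla\cdot\hat{\boldsymbol\sigma},\boldsymbol v)_K=-(\hat{\boldsymbol\sigma},\stfD\boldsymbol v)_K=0,
\]
so $\nabla\cdot\hat{\boldsymbol\sigma}|_K$ is $L^2(K)$-orthogonal to $\mathcal K|_K$. Since you want $\nabla\cdot\hat{\boldsymbol\sigma}=\hat{\boldsymbol u}\in\mathcal K$, this forces $\hat{\boldsymbol u}|_K=0$. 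In short, bubbles can hit precisely the \emph{complement} of $\mathcal K$, never $\mathcal K$ itself; this is exactly why they are used for the $\mathrm U_{h,0}$ part of the inf-sup argument (the $B$-compatibility of $\mathcal I_h$), not for the present lemma. You have misread the ``necessity for $m\ge d$'' remark: the trivial right inverse it refers to lives in the $\mathcal P_d$ part of $\Sigma_h$, not in the bubble enrichment.

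The paper's construction is \emph{global}, not elementwise. Because $\mathcal K\subset\mathcal P_{d-1}(\Omega;\mathbb R^d)$, one writes down by hand a single polynomial $\hat{\boldsymbol\sigma}\in\mathcal P_d(\Omega;\mathbb T)$ on all of $\Omega$ with $\nabla\cdot\hat{\boldsymbol\sigma}=\hat{\boldsymbol u}$: for $d=2$ this is a one-line formula for each rigid motion; for $d=3$ one treats the translation, rotation, scaling, and special-conformal pieces of $\mathbf{CK}$ separately, each yielding an explicit trace-free symmetric polynomial of degree $\le 3$. Since $\mathcal P_d(\Omega;\mathbb T)\subset\mathbb P_h^d(\Omega;\mathbb T)\subset\Sigma_h$, membership in $\Sigma_h$ is automatic and no gluing is needed. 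The bound $\|\hat{\boldsymbol\sigma}\|_1\lesssim\|\hat{\boldsymbol u}\|_0$ then follows, with an $h$-independent constant, by observing that both sides are norms on the fixed finite-dimensional parameter space ($\mathbb R^3$ for $d=2$, $\mathbb R^{10}$ for $d=3$) and invoking equivalence of norms. Your ``slicker argument'' at the end of Step~2 is close to this, but the point you missed is that a global polynomial construction makes the $h$-tracking concern disappear entirely.
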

\begin{proof}
For $d=2$, 
a rigid body motion $\hat{\boldsymbol{u}}$ 
writes as:
$$
\hat{\boldsymbol{u}} 
=\left(
ax_2+b_1,-ax_1+b_2
\right)
= a(x_2 \boldsymbol{e}_1 - x_1 \boldsymbol{e}_2) + b_1 \boldsymbol{e}_1 + b_2 \boldsymbol{e}_2.
$$
Construct 
$
\hat{\boldsymbol{\sigma}} = 
(ax_1x_2 + b_1x_1) \boldsymbol{e}_1 \otimes \boldsymbol{e}_1 
+ (-ax_1x_2 + b_2x_2) \boldsymbol{e}_2 \otimes \boldsymbol{e}_2
\in \mathcal P_2 \subseteq \Sigma_h$.
A direct calculation of the divergence gives
$\nabla \cdot \hat{\boldsymbol{\sigma}} = \hat{\boldsymbol{u}}$. 

Also, note that $\|\hat{\boldsymbol u}\|_0$ can be viewed as a norm on $\mathbb R^3$, let's say,
$\|\cdot\|_{\hat{\boldsymbol u}}:\mathbb R^3\to\mathbb R$ induced by
$\|(a,b,c)\|_{\hat{\boldsymbol u}}:= \|\hat{\boldsymbol u}\|_0
=\left\|
\left(
ax_2+b,-ax_1+c
\right)
\right\|_0$.
We claim $\|\cdot\|_{\hat{\boldsymbol u}}$ is a norm.
The homogeneity and positive definiteness are obvious.
For triangle inequality,
\begin{align*}
\left\|
(a_1,b_1,c_1)+(a_2,b_2,c_2)
\right\|_{\hat{\boldsymbol u}}
&=
\left\|
\hat{\boldsymbol u}_1+\hat{\boldsymbol u}_2
\right\|_0 \\
&
\leq \|\hat{\boldsymbol u}_1\|_0+\|\hat{\boldsymbol u}_2\|_0
= \left\|
(a_1,b_1,c_1)
\right\|_{\hat{\boldsymbol u}}
+
\left\|
(a_2,b_2,c_2)
\right\|_{\hat{\boldsymbol u}}.
\end{align*}
Hence $\|\cdot\|_{\hat{\boldsymbol u}}$ is a norm on $\mathbb R^3$.
Similarly, $\|\hat{\boldsymbol{\sigma}}\|_1$ can induce
$\|(a,b,c)\|_{\hat{\boldsymbol \sigma}}:= \|\hat{\boldsymbol \sigma}\|_1$
as a norm on $\mathbb R^3$ and applying the equivalence of norms in $\mathbb R^3$ completes the proof.   

For $d=3$, 
since $\mathbf{CK} \subseteq \mathcal{P}_2(\Omega;\mathbb{R}^3)$ is a finite-dimensional space of polynomials, for any $\hat{\boldsymbol{u}} \in \mathbf{CK}$, 
it suffices to construct a polynomial tensor $\hat{\boldsymbol{\sigma}} \in \mathcal{P}_3(\Omega;\mathbb{R}^{3\times 3}_{\rm stf})$ such that $\nabla \cdot \hat{\boldsymbol{\sigma}} = \hat{\boldsymbol{u}}$.
Note that any $\hat{\boldsymbol{u}}\in{\bf CK}$ can be decomposed into four different parts:
\begin{align}
\hat{\mathbf{u}}(\mathbf{x}) = \underbrace{\mathbf{a}}_{\text{Translation}} + \underbrace{A\mathbf{x}}_{\text{Rotation}} + \underbrace{\lambda \mathbf{x}}_{\text{Scaling}} + \underbrace{(2(\mathbf{b}\cdot\mathbf{x})\mathbf{x} - \|\mathbf{x}\|^2 \mathbf{b})}_{\text{Special Conformal}}.
\end{align}
We construct $\hat{\boldsymbol{\sigma}}\in\mathcal{P}_3(\Omega; \mathbb{R}^{3\times 3}_{\rm stf})\subseteq \Sigma_h$ for each type ${\hat{\boldsymbol{u}}}$ such that $\nabla\cdot\hat{\boldsymbol{\sigma}}=\hat{\boldsymbol{u}}$ as follows:
\begin{itemize}
\vspace{1ex}
\item For translation motion $\hat{\boldsymbol{u}} = \mathbf{a}$, 
define 
$
\hat{\boldsymbol{\sigma}} = \big(3(\mathbf{a}\otimes\mathbf{x} + \mathbf{x}\otimes\mathbf{a}) - 2(\mathbf{a}\cdot\mathbf{x})\boldsymbol{I}\big)/10.
$
\vspace{1ex}
\item For rotation motion $\hat{\boldsymbol{u}} = A\mathbf{x}$, 
define 
$
\hat{\boldsymbol{\sigma}} = \left((A\mathbf{x})\otimes\mathbf{x} + \mathbf{x}\otimes(A\mathbf{x})\right)/5.
$
\vspace{1ex}
\item  For scaling motion $\hat{\boldsymbol{u}} = \lambda\mathbf{x}$, define 
$
\hat{\boldsymbol{\sigma}} = 3\lambda\big(\mathbf{x}\otimes\mathbf{x} - \frac{1}{3}\|\mathbf{x}\|^2\boldsymbol{I}\big)/10.
$
\vspace{1ex}
\item For special conformal motion $\hat{\boldsymbol{u}} = 2(\mathbf{b}\cdot\mathbf{x})\mathbf{x} - \|\mathbf{x}\|^2\mathbf{b}$, define
$$
\hat{\boldsymbol{\sigma}} = \frac{1}{70}\left( 34(\mathbf{b}\cdot\mathbf{x})\mathbf{x}\otimes\mathbf{x} - 11\|\mathbf{x}\|^2(\mathbf{b}\otimes\mathbf{x} + \mathbf{x}\otimes\mathbf{b}) - 4\|\mathbf{x}\|^2(\mathbf{b}\cdot\mathbf{x})\boldsymbol{I} \right).
$$
\end{itemize}

Then applying the equivalence of norms in finite-dimensional spaces, as in the proof of $d=2$, completes the proof.
\end{proof}

We come to the discrete inf-sup condition for $(\Sigma_h,\mathrm{U}_{h})$
at the end of this subsection:
\begin{theorem}
\label{thm:inf_sup_for_Sigma_and_U}
    The finite element spaces $(\Sigma_h,\mathrm{U}_{h})$ satisfy the discrete inf-sup condition,
    \begin{equation}
        \label{eq:inf_sup_for_Sigma_and_U}
        \inf_{\boldsymbol{u}_h\in {\rm U_h}} \sup_{\boldsymbol{\sigma}_h \in \Sigma_h}\frac{(\boldsymbol{u}_h,\nabla\cdot\boldsymbol{\sigma}_h)}{\|\boldsymbol{u}_h\|_0\|\boldsymbol{\sigma}_h\|_1}\gtrsim 1.
    \end{equation}
\end{theorem}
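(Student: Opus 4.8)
The plan is to split $\boldsymbol{u}_h$ along the $L^2$-orthogonal decomposition $\mathrm{U}_h = \mathcal{K}\oplus\mathrm{U}_{h,0}$ of \cref{eq:L2_orthgonal_decomposition}, build a candidate test tensor separately on each piece using the two right-inverse results already established, and then add the two pieces together, checking that the resulting cross terms cancel. So I would fix $\boldsymbol{u}_h\in\mathrm{U}_h$ (the case $\boldsymbol{u}_h=0$ being vacuous) and write $\boldsymbol{u}_h = \hat{\boldsymbol{u}}_h + \boldsymbol{u}_h^0$ with $\hat{\boldsymbol{u}}_h\in\mathcal{K}$ and $\boldsymbol{u}_h^0\in\mathrm{U}_{h,0}=\mathcal{K}^\perp\cap\mathrm{U}_h$, so that by orthogonality $\|\boldsymbol{u}_h\|_0^2 = \|\hat{\boldsymbol{u}}_h\|_0^2 + \|\boldsymbol{u}_h^0\|_0^2$.

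For the kernel part, \cref{lem:inv_div_for_mathcal_K} furnishes $\hat{\boldsymbol{\sigma}}\in\Sigma_h$ with $\nabla\cdot\hat{\boldsymbol{\sigma}} = \hat{\boldsymbol{u}}_h$ and $\|\hat{\boldsymbol{\sigma}}\|_1\lesssim\|\hat{\boldsymbol{u}}_h\|_0$. For the complementary part, \cref{thm:inf_sup_for_H10_and_U0} gives $\boldsymbol{\tau}_0\in H_0^1(\Omega;\mathbb{T})$ with $-\nabla\cdot\boldsymbol{\tau}_0 = \boldsymbol{u}_h^0$ and $\|\boldsymbol{\tau}_0\|_1\lesssim\|\boldsymbol{u}_h^0\|_0$. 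Since $\boldsymbol{\tau}_0$ is generally not discrete, I replace it by its interpolant $\mathcal{I}_h\boldsymbol{\tau}_0\in\Sigma_{h,0}\subseteq\Sigma_h$, which by \cref{Lem:interp_op} satisfies $\|\mathcal{I}_h\boldsymbol{\tau}_0\|_1\lesssim|\boldsymbol{\tau}_0|_1\lesssim\|\boldsymbol{u}_h^0\|_0$, and which by the $B$-compatibility lemma obeys $(\nabla\cdot\mathcal{I}_h\boldsymbol{\tau}_0,\boldsymbol{u}_h^0) = (\nabla\cdot\boldsymbol{\tau}_0,\boldsymbol{u}_h^0) = -\|\boldsymbol{u}_h^0\|_0^2$.

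I then take $\boldsymbol{\sigma}_h \coloneqq \hat{\boldsymbol{\sigma}} - \mathcal{I}_h\boldsymbol{\tau}_0\in\Sigma_h$ and expand $(\boldsymbol{u}_h,\nabla\cdot\boldsymbol{\sigma}_h)$ into four terms. The two diagonal terms contribute $(\hat{\boldsymbol{u}}_h,\nabla\cdot\hat{\boldsymbol{\sigma}}) = \|\hat{\boldsymbol{u}}_h\|_0^2$ and $-(\boldsymbol{u}_h^0,\nabla\cdot\mathcal{I}_h\boldsymbol{\tau}_0) = \|\boldsymbol{u}_h^0\|_0^2$. The cross term $(\boldsymbol{u}_h^0,\nabla\cdot\hat{\boldsymbol{\sigma}}) = (\boldsymbol{u}_h^0,\hat{\boldsymbol{u}}_h)$ vanishes because $\boldsymbol{u}_h^0\in\mathcal{K}^\perp$ and $\hat{\boldsymbol{u}}_h\in\mathcal{K}$; the cross term $(\hat{\boldsymbol{u}}_h,\nabla\cdot\mathcal{I}_h\boldsymbol{\tau}_0)$ vanishes because $\mathcal{I}_h\boldsymbol{\tau}_0\in H_0^1(\Omega;\mathbb{T})$, so \cref{lem:adjoint_div_grad} rewrites it as $-(\stfD\hat{\boldsymbol{u}}_h,\mathcal{I}_h\boldsymbol{\tau}_0) = 0$ since $\hat{\boldsymbol{u}}_h\in\mathcal{K}=\ker(\stfD)$. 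Hence $(\boldsymbol{u}_h,\nabla\cdot\boldsymbol{\sigma}_h) = \|\hat{\boldsymbol{u}}_h\|_0^2 + \|\boldsymbol{u}_h^0\|_0^2 = \|\boldsymbol{u}_h\|_0^2$, while the triangle inequality and the two stability bounds give $\|\boldsymbol{\sigma}_h\|_1 \le \|\hat{\boldsymbol{\sigma}}\|_1 + \|\mathcal{I}_h\boldsymbol{\tau}_0\|_1 \lesssim \|\hat{\boldsymbol{u}}_h\|_0 + \|\boldsymbol{u}_h^0\|_0 \lesssim \|\boldsymbol{u}_h\|_0$. Dividing, $\sup_{\boldsymbol{\sigma}_h\in\Sigma_h}(\boldsymbol{u}_h,\nabla\cdot\boldsymbol{\sigma}_h)/\|\boldsymbol{\sigma}_h\|_1 \ge \|\boldsymbol{u}_h\|_0^2/\|\boldsymbol{\sigma}_h\|_1 \gtrsim \|\boldsymbol{u}_h\|_0$, which is exactly \cref{eq:inf_sup_for_Sigma_and_U}.

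The step I expect to be the crux is the cancellation of the two cross terms, since this is where the structure of the enriched space is genuinely used: it is precisely because $\mathcal{I}_h$ maps into $\Sigma_{h,0}$ (homogeneous traces) and is $H^1$-stable that $(\hat{\boldsymbol{u}}_h,\nabla\cdot\mathcal{I}_h\boldsymbol{\tau}_0)$ may be integrated by parts and then killed by $\hat{\boldsymbol{u}}_h\in\ker(\stfD)$, and it is because $\mathrm{U}_{h,0}=\mathcal{K}^\perp\cap\mathrm{U}_h$ that the other cross term is orthogonal to zero; neither \cref{lem:inv_div_for_mathcal_K} (a polynomial right inverse lying inside $\Sigma_h$) nor the stable, trace-preserving $\mathcal{I}_h$ of \cref{Lem:interp_op} would be available for the plain $\mathbb{P}_d$ tensor space. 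Everything else — the expansion into four terms and the final norm estimate — is routine bookkeeping.
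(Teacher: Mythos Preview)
Your proposal is correct and follows essentially the same route as the paper's proof: the same orthogonal decomposition $\mathrm{U}_h=\mathcal{K}\oplus\mathrm{U}_{h,0}$, the same two right-inverse ingredients (\cref{lem:inv_div_for_mathcal_K} for the kernel piece, \cref{thm:inf_sup_for_H10_and_U0} followed by $\mathcal{I}_h$ for the complement), the same four-term expansion with the cross terms killed by orthogonality and by \cref{lem:adjoint_div_grad}. The only cosmetic difference is a sign convention in how you name the continuous lift ($-\nabla\cdot\boldsymbol{\tau}_0=\boldsymbol{u}_h^0$ versus the paper's $\nabla\cdot\boldsymbol{\sigma}_0=\boldsymbol{u}_{h,0}$), which propagates into $\boldsymbol{\sigma}_h=\hat{\boldsymbol{\sigma}}-\mathcal{I}_h\boldsymbol{\tau}_0$ rather than a sum.
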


\begin{proof}
Any given $\boldsymbol{u}_{h}\in {\rm U}_{h}$ admits the decomposition:
\begin{equation}
\label{eq:split_uh_3D}   \boldsymbol{u}_{h}=\boldsymbol{u}_{h,0}+\hat{\boldsymbol{u}},\quad \boldsymbol{u}_{h,0}\in {\rm U_{h,0}},\quad\hat{{\boldsymbol{u}}}\in\mathcal{K}.
\end{equation}
By \cref{thm:inf_sup_for_H10_and_U0}, there exists ${\boldsymbol{\sigma}}_0 \in H^1_0(\Omega;\mathbb T)$ such that,
\begin{equation}
\nabla\cdot {\boldsymbol{\sigma}}_0 = \boldsymbol{u}_{h,0},\quad \|{\boldsymbol{\sigma}}_0\|_1 \lesssim \|\boldsymbol{u}_{h,0}\|_0.
\end{equation}
Denote ${\boldsymbol{\sigma}}_{h,0}:={\cal I}_{h}{\boldsymbol{\sigma}}_0\in{\Sigma}_{h,0}$ where the interpolation operator ${\cal I}_h$ is defined in \cref{eq:interp_operator}.
Also, by \cref{lem:inv_div_for_mathcal_K}, there exists $\hat{{\boldsymbol{\sigma}}}\in{\Sigma}_{h}$ such that,
\begin{equation}
\label{eq:hat_u_3D}
\nabla\cdot\hat{{\boldsymbol{\sigma}}}=\hat{{\boldsymbol{u}}},\quad\|\hat{{\boldsymbol{\sigma}}}\|_{1}\lesssim\|\hat{{\boldsymbol{u}}}\|_{0}.
\end{equation}

Construct $\boldsymbol{\sigma}_{h} \coloneqq \boldsymbol{\sigma}_{h,0}+\hat{\boldsymbol{\sigma}}$ and then we have:
\begin{equation}
\label{eq:split_I_3D}
(\boldsymbol{u}_h,\nabla\cdot\boldsymbol{\sigma}_h) =  \underbrace{(\boldsymbol{u}_{h,0}, \nabla \cdot \boldsymbol{\sigma}_{h,0})}_{I_1}
+ \underbrace{(\hat{\boldsymbol{u}}, \nabla \cdot \boldsymbol{\sigma}_{h,0})}_{I_2}
+ \underbrace{(\boldsymbol{u}_{h,0}, \nabla \cdot \hat{\boldsymbol{\sigma}})}_{I_3}
+ \underbrace{(\hat{\boldsymbol{u}}, \nabla \cdot \hat{\boldsymbol{\sigma}})}_{I_4}.
\end{equation}

For the first term, the $B$-compatibility of ${\cal I}_h$ yields $I_{1} = (\boldsymbol{u}_{h,0}, \boldsymbol{u}_{h,0}) = \|\boldsymbol{u}_{h,0}\|_{0}^2$.
For the second term, applying 
\cref{lem:adjoint_div_grad}
and the condition $\hat{\boldsymbol{u}} \in {\cal K}$ yields
$
I_2 = -(\boldsymbol{\sigma}_{h,0}, \stfD \hat{\boldsymbol{u}}) = 0.
$
Also, the third term vanishes due to orthogonality, and the fourth term simplifies to $I_4 = \|\hat{\boldsymbol{u}}\|_{0}^2$. Hence, we conclude that:
\begin{equation}
    \label{eq:integral_value_3D}
    (\boldsymbol{u}_h,\nabla\cdot\boldsymbol{\sigma}_h) = \|\boldsymbol{u}_{h,0}\|_0^2 + \|\hat{\boldsymbol{u}}\|_0^2.
\end{equation}

Next, we estimate $\|{\boldsymbol{u}_h}\|_0$ and $\|{\boldsymbol{\sigma}}_h\|_1$.
    For ${\boldsymbol{u}}_{h}={\boldsymbol{u}}_{h,0}+\hat{{\boldsymbol{u}}}$, we have:
    \begin{equation}
        \|{\boldsymbol{u}}_{h}\|_0^2=\|{\boldsymbol{u}}_{h,0}+\hat{{\boldsymbol{u}}}\|_{0}^2
        =\|{\boldsymbol{u}}_{h,0}\|_0^2+\|\hat{{\boldsymbol{u}}}\|_0^2.
    \end{equation}
    And for ${\boldsymbol{\sigma}}_{h}={\boldsymbol{\sigma}}_{h,0}+\hat{{\boldsymbol{\sigma}}}$, the $H^1$ stability of ${\cal I}_h$ and Cauchy inequality give:
    \begin{equation}
        \label{eq:approx_norm_psi_3D}
        \|{\boldsymbol{\sigma}}_{h}\|_{1}
        \le\|{\boldsymbol{\sigma}}_{h,0}\|_1+\|\hat{{\boldsymbol{\sigma}}}\|_1
        \lesssim\|{\boldsymbol{\sigma}}_0\|_1+\|\hat{\boldsymbol{u}}\|_0
        \lesssim\|{\boldsymbol{u}}_{h,0}\|_{0}+ \|\hat{{\boldsymbol{u}}}\|_{0}
        \lesssim \|\boldsymbol{u}_{h,0}\|_0.
    \end{equation}
    Therefore, for $\boldsymbol{u}_{h}\in {\rm U}_{h}$, there exists $\boldsymbol{\sigma}_h\in\Sigma_h$ such that
    \begin{equation}
        \frac{|(\boldsymbol{u}_h,\nabla\cdot\boldsymbol{\sigma}_h)|}{\|{\boldsymbol{\sigma}}_{h}\|_1\|{\boldsymbol{u}}_h\|_0}\gtrsim
        \frac{\|{\boldsymbol{u}}_h\|_0^2}{\|{\boldsymbol{u}}_h\|_0\|{\boldsymbol{u}}_h\|_0}=1,
    \end{equation}
    which yields the discrete inf-sup condition \cref{eq:inf_sup_for_Sigma_and_U}.
\end{proof}
 
\subsection{Coercivity on the discrete kernel}
To verify the coercivity on the discrete kernel in \cref{eq:SufficientCondition_coercive}, we first state the constraint characterizing $\ker B_h$,
\begin{equation}
\label{eq:kerBh}
\begin{aligned}
{
(\boldsymbol{v}_h,\nabla \cdot\boldsymbol{\sigma}_h)
+
(\nabla p_h,\boldsymbol{v}_h)
+
(\gamma_h, \nabla \cdot \boldsymbol{s}_h)
}
=0
\quad
\forall 
(\boldsymbol{v}_h,\gamma_h) 
\in \mathrm{U}_{h}\times \Theta_h.
\end{aligned}
\end{equation}
For any $(\boldsymbol{\sigma}_h,\boldsymbol{s}_h,p_h)\in \ker B_h$, this implies the following orthogonal condition:
\begin{align}
\label{eq:coercive_ker_orthgonal_condition}
\left(\boldsymbol{v}_h,\nabla\cdot\boldsymbol{\sigma}_h+\nabla p_h\right)=0,~\forall\, \boldsymbol{v}_h\in \mathrm{U}_{h}.    
\end{align}
Since $\ker{B_h} \not\subseteq \ker B$, coercivity does not directly follow from the continuous case. 
Specifically, for \cref{eq:coercive_ker_orthgonal_condition}, $\nabla\cdot\boldsymbol{\sigma}_h \neq -\nabla p_h$ in general. 
Consequently, our first step is to verify the bound:
\begin{align}
\label{eq:sufficient_coercive_condition}
    \|\nabla p_h\|_{0} \lesssim \|\boldsymbol{\sigma}_h\|_{1}.
\end{align}
Establishing this estimate allows us to subsequently apply Korn's inequalities to complete the proof.

To facilitate the analysis,
let $\Pi_h: L^2(\Omega;\mathbb R^d) \to \mathrm{U}_{h}$ denote the global $L^2$ projection defined by $(\boldsymbol{v}_h, \boldsymbol{u}-\Pi_h \boldsymbol{u}) = 0$ for all $\boldsymbol{v}_h\in \mathrm{U}_{h}$. The orthogonality condition \cref{eq:coercive_ker_orthgonal_condition} implies $\Pi_h\nabla p_h=-\Pi_h(\nabla\cdot\boldsymbol{\sigma}_h)$, then yielding
\begin{align}
\label{eq:coro_coercive_ker_orthgonal_condition}
\|\Pi_h \nabla p_h\|_{0}
= \|\Pi_h (\nabla\cdot \boldsymbol{\sigma}_h)\|_{0}
\leq \|\nabla\cdot \boldsymbol{\sigma}_h\|_{0}
\leq \|\boldsymbol{\sigma}_h\|_{1}.
\end{align}
Furthermore, we observe that
\begin{align}
\label{eq:equivalent_expr_L2projection}
\| \Pi_h \nabla p_h\|_{0} 
= 
\sup_{ \boldsymbol{v}_h \in \mathrm{U}_{h}} 
\frac{(\boldsymbol{v}_h,\Pi_h\nabla p_h)}
{\|\boldsymbol{v}_h\|_{0}}
= 
\sup_{\boldsymbol{v}_h\in \mathrm{U}_{h}}
\frac{(\boldsymbol{v}_h,\nabla p_h)}
{\|\boldsymbol{v}_h\|_{0}}.
\end{align}
Therefore, \cref{eq:sufficient_coercive_condition} follows from the condition below:
\begin{lemma}
\label{lem:bercovier_type_inf_sup}
   For any $p_h \in {\rm P}_h$, there holds
\begin{align}
\label{eq:bercovier_type_inf_sup}
\sup_{\boldsymbol{v}_{h} \in \mathrm{U}_{h}} \frac{(\boldsymbol{v}_{h},\nabla p_h)}{\|\boldsymbol{v}_{h}\|_{0}} \gtrsim 
\|\nabla p_h\|_0. 
\end{align}
Consequently, if the orthogonality condition \cref{eq:coercive_ker_orthgonal_condition}
holds,
then
\begin{align}
\label{eq:sufficient_coercive_condition_proved}
    \|\nabla p_h\|_{0} \lesssim \|\boldsymbol{\sigma}_h\|_{1}.
\end{align}
\end{lemma}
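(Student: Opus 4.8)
\emph{Proof plan.}
The ``consequently'' clause needs no new work: by \cref{eq:coercive_ker_orthgonal_condition}, $\Pi_h\nabla p_h=-\Pi_h(\nabla\cdot\boldsymbol{\sigma}_h)$, so $\|\Pi_h\nabla p_h\|_0\le\|\boldsymbol{\sigma}_h\|_1$ follows from \cref{eq:coro_coercive_ker_orthgonal_condition}, and combining this with \cref{eq:bercovier_type_inf_sup,eq:equivalent_expr_L2projection} gives \cref{eq:sufficient_coercive_condition_proved}. Hence the real content is the inf-sup \cref{eq:bercovier_type_inf_sup}, which by \cref{eq:equivalent_expr_L2projection} is equivalent to $\|\Pi_h\nabla p_h\|_0\gtrsim\|\nabla p_h\|_0$; it suffices to exhibit a single $\boldsymbol{v}_h\in\mathrm{U}_h$ with $(\boldsymbol{v}_h,\nabla p_h)\gtrsim\|\boldsymbol{v}_h\|_0\|\nabla p_h\|_0$. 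I recognize this as the ``local'', high-frequency half of the classical Taylor--Hood inf-sup argument (in the spirit of Verf\"urth and Bercovier--Pironneau). Note that a quasi-interpolation or best-approximation bound only delivers $\|\nabla p_h-\Pi_h\nabla p_h\|_0\lesssim\|\nabla p_h\|_0$ with a constant that cannot be forced below $1$, so no lower bound on $\|\Pi_h\nabla p_h\|_0$ follows that way; instead $\boldsymbol{v}_h$ must be built explicitly. Since $\mathrm{U}_h$ carries no interior bubbles, the right building block is the \emph{edge bubble}.

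For $d=2$: to each mesh edge $e=[z_1,z_2]$ attach $\beta_e\coloneqq\lambda_{z_1}\lambda_{z_2}$, a continuous, nonnegative, piecewise-quadratic function supported on the edge patch $\omega_e$ and vanishing on $\partial\omega_e$; since $\deg\beta_e=2\le d$, we have $\beta_e\boldsymbol{w}\in\mathrm{U}_h$ for every constant $\boldsymbol{w}\in\mathbb{R}^d$. Writing $\boldsymbol{c}_K\coloneqq\nabla p_h|_K$ (constant on each $K$) and $\tilde{\boldsymbol{c}}_e\coloneqq\int_{\omega_e}\beta_e\nabla p_h=\sum_{K\subset\omega_e}(\int_K\beta_e)\,\boldsymbol{c}_K$, set $\boldsymbol{v}_h\coloneqq\sum_e\|\beta_e\|_{0,\omega_e}^{-2}\,\tilde{\boldsymbol{c}}_e\,\beta_e\in\mathrm{U}_h$. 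Then $(\boldsymbol{v}_h,\nabla p_h)=\sum_e\|\beta_e\|_{0,\omega_e}^{-2}|\tilde{\boldsymbol{c}}_e|^2=:A\ge0$, and since on each element only the $\binom{d+1}{2}$ bubbles $\beta_e$ with $e\subset\partial K$ are active, Cauchy--Schwarz together with the bounded overlap of the patches gives $\|\boldsymbol{v}_h\|_0^2\lesssim A$; therefore $(\boldsymbol{v}_h,\nabla p_h)/\|\boldsymbol{v}_h\|_0\gtrsim A^{1/2}$, and everything reduces to proving $A\gtrsim\|\nabla p_h\|_0^2$.

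Using $\|\beta_e\|_{0,\omega_e}^2\simeq h^d\simeq|K|$, the target becomes $\sum_e|\tilde{\boldsymbol{c}}_e|^2\gtrsim h^{2d}\sum_K|\boldsymbol{c}_K|^2$. The structural input is the continuity of $p_h$: the derivative of $p_h$ along $e$ is single-valued on $e$, i.e., with $\boldsymbol{t}_e$ the unit edge direction, $\boldsymbol{c}_K\cdot\boldsymbol{t}_e$ equals the same number $g_e$ for every $K\subset\omega_e$. Hence $\tilde{\boldsymbol{c}}_e\cdot\boldsymbol{t}_e=\big(\sum_{K\subset\omega_e}\int_K\beta_e\big)g_e$ and $|\tilde{\boldsymbol{c}}_e|\ge|\tilde{\boldsymbol{c}}_e\cdot\boldsymbol{t}_e|\gtrsim h^d|g_e|$; summing over edges and re-indexing by elements (bounded overlap again) gives $\sum_e|\tilde{\boldsymbol{c}}_e|^2\gtrsim h^{2d}\sum_K\sum_{e\subset\partial K}|\boldsymbol{c}_K\cdot\boldsymbol{t}_e|^2$. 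Finally, for a shape-regular simplex the edge directions emanating from any fixed vertex form a basis of $\mathbb{R}^d$ whose condition number is controlled by the shape-regularity constant, so $\sum_{e\subset\partial K}|\boldsymbol{c}_K\cdot\boldsymbol{t}_e|^2\gtrsim|\boldsymbol{c}_K|^2$; summing over $K$ closes the argument for $d=2$.

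For $d=3$, $\mathrm{U}_h=\mathbb{P}^3_h$ and $\nabla p_h|_K\in\mathbb{P}_1(K)^3$ is no longer constant, so edge bubbles by themselves cannot reproduce it. The plan is to enrich the local test space on each patch by also allowing products of edge bubbles with globally continuous, piecewise-linear functions (still in $\mathbb{P}_3=\mathbb{P}_d$) and products of codimension-one face bubbles with constants, and to choose the coefficients on $\omega_e$ (resp.\ on a face patch $\omega_f$) so as to reproduce enough moments of $\nabla p_h$; tangential continuity still fixes $(\nabla p_h\cdot\boldsymbol{t}_e)|_e\in\mathbb{P}_1(e)$ and the simplex edge frame stays well conditioned. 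I expect this to be the main obstacle: because the linear field $\nabla p_h$ differs from element to element within a patch while the test function has to be globally $C^0$, the continuity constraint couples neighbouring elements, and one must verify that these couplings cannot conspire to annihilate the test functional --- which is precisely where tangential continuity of $\nabla p_h$ and the uniform conditioning of the edge directions carry the argument.
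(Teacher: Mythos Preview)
Your handling of the ``consequently'' clause is exactly what the paper does. For the inf-sup itself the paper takes a much shorter route: it simply \emph{cites} the classical Taylor--Hood literature, namely Bercovier--Pironneau for $d=2$ and Boffi's macroelement argument for $d=3$ (the latter under the standing mesh hypothesis that every tetrahedron has an interior vertex). Your $d=2$ argument is a correct, self-contained reproof of that result via edge bubbles and tangential continuity of $\nabla p_h$; it is the standard Verf\"urth/Bercovier--Pironneau construction and buys you independence from the cited references at the cost of some length.

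The gap is in three dimensions. You only sketch the $d=3$ case and explicitly flag the inter-element coupling of the piecewise-linear field $\nabla p_h$ as the ``main obstacle'' without resolving it. That is precisely the point where direct bubble constructions become delicate and where the literature instead passes through the macroelement technique; note in particular that the paper's mesh assumption (each tetrahedron has at least one interior vertex) is invoked exactly here, and your sketch neither uses nor accounts for it. As written, your $d=3$ plan is not a proof. Either carry the edge/face-bubble construction through in full---which is doable for $\mathbb{P}_3$--$\mathbb{P}_2$ but nontrivial---or, as the paper does, appeal to the established 3D Taylor--Hood stability results.
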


\begin{proof}
\cref{eq:bercovier_type_inf_sup} is the Bercovier-Pironneau type discrete inf-sup condition for the $\mathbb P_d$-$\mathbb P_{d-1}$ Taylor-Hood pair $(\mathrm{U}_{h},{\rm P}_h)$. 
The case $d=2$ was established in \cite{bercovier1979error}; 
for $d=3$, the result follows from 
the macroelement technique as given in \cite{boffi1997three},
provided every tetrahedron contains at least one vertex in the domain interior.
Then
using \cref{eq:equivalent_expr_L2projection,eq:coro_coercive_ker_orthgonal_condition,eq:bercovier_type_inf_sup}, we obtain
$
\|\boldsymbol{\sigma}_h\|_1 \geq \sup_{\boldsymbol{v}_h \in {\rm U}_h}\frac{\left(\boldsymbol{v}_h,\nabla p_h\right)}{\|\boldsymbol{v}_h\|_0}
\gtrsim \|\nabla p_h\|_0
$
and complete the proof.
\end{proof}

We are now ready to establish the coercivity on the discrete kernel from \cref{eq:sufficient_coercive_condition_proved}.
\begin{theorem}
\label{thm:coercivity_condition_finial_2D}
For any 
$\boldsymbol{\mathcal S}_h \in 
\ker B_h$, the following coercivity condition holds:
\begin{equation}
\begin{aligned}
\mathcal A(\boldsymbol{\cal S}_h,\boldsymbol{\cal S}_h)
\gtrsim 
\|\boldsymbol{\cal S}_h\|^2_{\rm T}
\coloneqq
\|\boldsymbol{s}_h\|_1^2 + 
\|\boldsymbol{\sigma}_h\|_1^2 +
\|p_h\|_1^2.
\end{aligned}
\end{equation}
\end{theorem}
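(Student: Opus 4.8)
The plan is to test $\mathcal A$ with the solution itself and reduce to the sufficient condition \cref{eq:SufficientCondition_coercive}. Writing out $\mathcal A(\boldsymbol{\mathcal S}_h,\boldsymbol{\mathcal S}_h)=a(\boldsymbol s_h,\boldsymbol s_h)+c(\boldsymbol s_h,\boldsymbol\sigma_h)-c(\boldsymbol s_h,\boldsymbol\sigma_h)+d(\boldsymbol\sigma_h,\boldsymbol\sigma_h)$, the skew coupling cancels and we are left with $\mathcal A(\boldsymbol{\mathcal S}_h,\boldsymbol{\mathcal S}_h)=a(\boldsymbol s_h,\boldsymbol s_h)+d(\boldsymbol\sigma_h,\boldsymbol\sigma_h)$. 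From the explicit formulas in \cref{eqs:def_bilinear_forms}, each of these is a sum of nonnegative quadratic terms: interior $L^2$ norms of $\operatorname{sym}\nabla\boldsymbol s_h$, $\nabla\cdot\boldsymbol s_h$, $\boldsymbol s_h$, $\operatorname{Stf}\nabla\boldsymbol\sigma_h$ and $\boldsymbol\sigma_h$, plus boundary integrals of squares weighted by the positive constants $\operatorname{Kn}$ and $\tilde\chi$. Dropping the boundary and $\nabla\cdot\boldsymbol s_h$ contributions gives
\begin{equation*}
\mathcal A(\boldsymbol{\mathcal S}_h,\boldsymbol{\mathcal S}_h)\;\gtrsim\;\|\operatorname{sym}\nabla\boldsymbol s_h\|_0^2+\|\boldsymbol s_h\|_0^2+\|\operatorname{Stf}\nabla\boldsymbol\sigma_h\|_0^2+\|\boldsymbol\sigma_h\|_0^2,
\end{equation*}
so it suffices to prove \cref{eq:SufficientCondition_coercive}.

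For the heat-flux block I would invoke the classical Korn second inequality (no boundary conditions), $\|\boldsymbol s_h\|_1^2\lesssim\|\operatorname{sym}\nabla\boldsymbol s_h\|_0^2+\|\boldsymbol s_h\|_0^2$, valid for every vector field in $H^1(\Omega;\mathbb R^d)$ on a bounded Lipschitz domain. For the stress block I would use its tensor counterpart, a Korn-type inequality $\|\boldsymbol\sigma_h\|_1^2\lesssim\|\operatorname{Stf}\nabla\boldsymbol\sigma_h\|_0^2+\|\boldsymbol\sigma_h\|_0^2$ for $\mathbb T$-valued $H^1$ fields (for $d=2$ understood through the embedding \cref{eq:auxiliary_tensors}). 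Since $\operatorname{Stf}\nabla$ is a first-order operator with injective principal symbol and finite-dimensional kernel, this follows from a standard ellipticity argument combined with Rellich compactness, analogous in spirit to the compactness step in the proof of \cref{thm:inv_divergence_new_proof}; it is precisely the Korn-type inequality advertised in the keywords and is the only nonstandard analytic ingredient, obtainable by arguments of the same flavor as those behind \cref{thm:Korn_inequality_stfD_negative_norm} or quoted from the continuous well-posedness analysis in \cite{lewintan2025wellposedness}.

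Finally, for the pressure I would exploit the constraint: $\boldsymbol{\mathcal S}_h\in\ker B_h$ enforces the orthogonality \cref{eq:coercive_ker_orthgonal_condition}, whence \cref{lem:bercovier_type_inf_sup} yields $\|\nabla p_h\|_0\lesssim\|\boldsymbol\sigma_h\|_1$, and since $p_h$ has zero mean the Poincar\'e--Wirtinger inequality upgrades this to $\|p_h\|_1\lesssim\|\nabla p_h\|_0\lesssim\|\boldsymbol\sigma_h\|_1$. Chaining the three estimates and splitting a constant so as to absorb $\|p_h\|_1^2\lesssim\|\boldsymbol\sigma_h\|_1^2\lesssim\|\operatorname{Stf}\nabla\boldsymbol\sigma_h\|_0^2+\|\boldsymbol\sigma_h\|_0^2$ into the reduced bound gives $\|\boldsymbol s_h\|_1^2+\|\boldsymbol\sigma_h\|_1^2+\|p_h\|_1^2\lesssim\mathcal A(\boldsymbol{\mathcal S}_h,\boldsymbol{\mathcal S}_h)$, which is the assertion. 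The main obstacle is not the bookkeeping but securing the tensor Korn inequality and making the $d=2$/$d=3$ dichotomy in $\operatorname{Stf}\nabla$ fully uniform; once the $c$-term cancellation and the reduction to \cref{eq:SufficientCondition_coercive} are in hand, everything else is routine.
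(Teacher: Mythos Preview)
Your proposal is correct and follows essentially the same route as the paper: reduce to \cref{eq:SufficientCondition_coercive} via the $c$-term cancellation and positivity of the remaining pieces, then apply Korn's second inequality for $\boldsymbol s_h$, the tensor Korn-type inequality \cite[Lemma~3.11]{lewintan2025wellposedness} for $\boldsymbol\sigma_h$, and finally combine \cref{lem:bercovier_type_inf_sup} with Poincar\'e to control $\|p_h\|_1$ by $\|\boldsymbol\sigma_h\|_1$. The paper simply quotes the tensor Korn inequality from \cite{lewintan2025wellposedness} rather than rederiving it, which addresses the obstacle you flag at the end.
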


\begin{proof}
It suffices to verify \cref{eq:SufficientCondition_coercive}, which means 
for any $(\boldsymbol{\sigma}_h,\boldsymbol{s}_h,p_h)\in \ker B_h$,
\begin{equation}
\label{eq:SufficientCondition_coercive_discrete}
 \begin{aligned}
      \|{\rm sym~\nabla}\boldsymbol s_h\|_0^2
      + \|\boldsymbol s_h\|_0^2 
      + \|{\rm Stf~\nabla} \boldsymbol\sigma_h\|_{0}^2 + 
      \|{\boldsymbol{\sigma}_h}\|_0^2 
      \gtrsim 
      \|\boldsymbol{s}_h\|_1^2 + 
    \|\boldsymbol{\sigma}_h\|_1^2 +
    \|p_h\|_1^2.
 \end{aligned}
\end{equation}
From Korn's inequality of the second type
and 
\cite[Lemma 3.11]{lewintan2025wellposedness},
we have 
\begin{align}
\|{\rm sym~\nabla}\boldsymbol s_h\|_0^2
+ \|\boldsymbol s_h\|_0^2
\gtrsim
\|\boldsymbol{s}_h\|_1^2,
\quad
\|{\rm Stf~\nabla} \boldsymbol\sigma_h\|_{0}^2 + \|{\boldsymbol{\sigma}_h}\|_0^2   
\gtrsim \|{\boldsymbol{\sigma}_h}\|_1^2.
\end{align}
Hence to prove \cref{eq:SufficientCondition_coercive_discrete}, it remains to show
$
\|\boldsymbol{\sigma}_h\|_1 \gtrsim \|p_h\|_1.
$
Since $p_h\in \tilde{H}^1$, the proof is completed by invoking 
Poincaré's inequality and \cref{lem:bercovier_type_inf_sup}.
\end{proof}

\subsection{The \textit{a priori} error estimate}
We are now in a position to establish an \textit{a priori} error estimate.

\begin{theorem}
\label{thm:energy_norm_estimate}
The Galerkin formulation \cref{eq:MFEM_Scheme} for the linearized R13 system admits a unique solution $(\boldsymbol{\mathcal{S}}_h,\boldsymbol{\mathcal{U}}_h) = (\boldsymbol{\sigma}_h, \boldsymbol{s}_h, p_h, \boldsymbol{u}_h, \theta_h)$, which satisfies the quasi-optimal estimate:
\begin{align}
\label{eq:MFEM_quasi_optimal_estimate}
\left\|
(\boldsymbol{\mathcal{S}}_h,\boldsymbol{{\mathcal U}}_h)
-
(\boldsymbol{\mathcal{S}},\boldsymbol{{\mathcal U}})
\right\|_{\rm T \times W} 
&\lesssim 
\inf_{
(\boldsymbol{\mathcal{R}}_h,\boldsymbol{{\mathcal V}}_h) 
\in {\rm T_h \times W_h}
}    
\big\|
(\boldsymbol{\mathcal{R}}_h,\boldsymbol{{\mathcal V}}_h)
-
(\boldsymbol{\mathcal{S}},\boldsymbol{{\mathcal U}})
\big\|_{\rm T \times W}.
\end{align}
Furthermore, assume the exact solution 
$(\boldsymbol{\mathcal{S}},\boldsymbol{{\mathcal U}})=(\boldsymbol{\sigma}, \boldsymbol{s}, p, \boldsymbol{u}, \theta)$
satisfies
\begin{equation}
    \label{eq:regularity_of_exact_solution_2D}
    \boldsymbol{\sigma} \in H^{d+1}(\Omega;{\mathbb{T}}),\quad 
    \boldsymbol{s} \in H^{d+1}(\Omega;\mathbb{R}^d),\quad 
    p\in \tilde{H}^{d+1}(\Omega),\quad
    \boldsymbol{u} \in H^{d}(\Omega;\mathbb{R}^d),\quad 
    \theta \in H^{d}(\Omega),
\end{equation}
then the following a priori error estimate holds:
\begin{equation}
\begin{aligned}
&\|\boldsymbol{\sigma}-\boldsymbol{\sigma}_h\|_1
+\|\boldsymbol{s}-\boldsymbol{s}_h\|_1
+\|{p}-{p}_h\|_1
+\|\boldsymbol{u}-\boldsymbol{u}_h\|_0
+\|{\theta}-{\theta}_h\|_0
\\
&\qquad \lesssim
h^{d-1}|{p}|_{d}+
h^{d}\big(
|\boldsymbol{\sigma}|_{d+1}
+
|\boldsymbol{s}|_{d+1}
+|\boldsymbol{u}|_{d}
+|\theta|_{d}
\big).
\end{aligned}
\end{equation}
\end{theorem}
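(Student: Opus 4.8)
The plan is to prove \cref{thm:energy_norm_estimate} in two stages. First, I would establish well-posedness of the discrete problem \cref{eq:MFEM_Scheme} and the quasi-optimal estimate \cref{eq:MFEM_quasi_optimal_estimate} by invoking the discrete Brezzi theory. The three hypotheses needed are: boundedness of $\mathcal{A}$ and $\mathcal{B}$ on $\mathrm{T}_h \times \mathrm{W}_h$ (which is routine, since every bilinear form in \cref{eqs:def_bilinear_forms} is continuous with respect to the $\mathrm{T}$- and $\mathrm{W}$-norms, including the boundary terms via the trace theorem); coercivity of $\mathcal{A}$ on $\ker B_h$, which is exactly \cref{thm:coercivity_condition_finial_2D}; and the discrete inf-sup condition for $\mathcal{B}$ on $(\mathrm{T}_h, \mathrm{W}_h)$, which follows from combining \cref{thm:infsup_vector} and \cref{thm:inf_sup_for_Sigma_and_U} via the decoupling argument already laid out after \cref{eq:SufficientCondition_infsup}. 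By the standard theory for saddle-point problems (Brezzi's theorem applied in the discrete setting, see \cite{brezzi1974existence} or \cite{boffi1997three}), these three conditions give existence, uniqueness, and the Céa-type bound \cref{eq:MFEM_quasi_optimal_estimate}, where the hidden constant depends only on the continuity, coercivity, and inf-sup constants — all mesh-independent.

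Second, I would convert the quasi-optimal estimate into the concrete convergence rate by choosing a suitable interpolant of the exact solution and invoking standard polynomial approximation estimates. The space $\mathrm{T}_h$ contains $\Sigma_h \supseteq \mathbb{P}_h^d(\Omega;\mathbb{T})$ (the bubble enrichment only enlarges the space), $\mathrm{S}_h = \mathbb{P}_h^d(\Omega;\mathbb{R}^d)$, and $\mathrm{P}_h = \mathbb{P}_h^{d-1}(\Omega) \cap \tilde{H}^1$; the space $\mathrm{W}_h = \mathrm{U}_h \times \Theta_h$ consists of $\mathbb{P}_h^d$ and $\mathbb{P}_h^{d-1}$. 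Applying the Scott-Zhang (or Lagrange) interpolant component-wise and using the standard estimate $\|w - I_h w\|_j \lesssim h^{k+1-j}|w|_{k+1}$ for $\mathbb{P}_h^k$ elements, I get: for $\boldsymbol{\sigma}$, $\boldsymbol{s}$ measured in $H^1$ with degree-$d$ elements and regularity $H^{d+1}$, a rate $h^d$; for $p$ measured in $H^1$ with degree-$(d-1)$ elements and regularity $H^{d+1}$, a rate $h^{d-1}|p|_d$ (the binding term, since only $\mathbb{P}_{d-1}$ is available for pressure — note the estimate only needs $|p|_d$, matching $H^d$-regularity for the leading term); and for $\boldsymbol{u}$, $\theta$ measured in $L^2$ with degree-$d$ and degree-$(d-1)$ elements respectively and regularity $H^d$, rates $h^d|\boldsymbol{u}|_d$ and $h^{d-1}\cdot h \cdot|\theta|_d = h^d|\theta|_d$ — wait, $\Theta_h = \mathbb{P}_{d-1}$ measured in $L^2$ gives $h^d|\theta|_d$ directly. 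Summing these bounds over all components yields the stated estimate $h^{d-1}|p|_d + h^d(|\boldsymbol{\sigma}|_{d+1} + |\boldsymbol{s}|_{d+1} + |\boldsymbol{u}|_d + |\theta|_d)$.

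One technical point worth addressing carefully: the quasi-optimal estimate \cref{eq:MFEM_quasi_optimal_estimate} controls the error in the full $\mathrm{T} \times \mathrm{W}$ norm, which involves $\|\boldsymbol{\sigma} - \boldsymbol{\sigma}_h\|_1$, $\|\boldsymbol{s} - \boldsymbol{s}_h\|_1$, $\|p - p_h\|_1$, $\|\boldsymbol{u} - \boldsymbol{u}_h\|_0$, $\|\theta - \theta_h\|_0$ — exactly the left-hand side of the asserted error estimate. So no Aubin--Nitsche duality argument is required; the $L^2$ estimates on $\boldsymbol{u}$ and $\theta$ come for free because those are the natural norms on $\mathrm{W}$. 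I should also confirm that $p_h$ can be taken in $\tilde{H}^1$: since the exact $p$ has zero mean and the interpolant of a zero-mean $\mathbb{P}_{d-1}$-function need not have zero mean, I subtract its mean, which changes the $H^1$ seminorm not at all and is absorbed in the estimate; alternatively one uses that the $\tilde{H}^1$-conforming interpolant preserves the quasi-optimality since $\mathrm{P}_h$ already enforces zero mean.

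The main obstacle is essentially bookkeeping rather than a genuine difficulty: one must verify that the continuity constants of the boundary bilinear forms in \cref{eqs:def_bilinear_forms} (the $\int_\Gamma$ terms involving $\tilde{\chi}$ and $1/\tilde{\chi}$) are mesh-independent, which follows from the trace inequality $\|v\|_{0,\Gamma} \lesssim \|v\|_1$ applied on the fixed domain $\Omega$ — no inverse estimates or scaling arguments are needed since these are continuous (not discrete) trace bounds. With that in hand, the discrete Brezzi hypotheses are all verified from the earlier results, and the rest is a transparent application of approximation theory. I would therefore structure the written proof as: (i) cite discrete Brezzi with the three verified hypotheses to get \cref{eq:MFEM_quasi_optimal_estimate}; (ii) choose component-wise interpolants; (iii) insert standard approximation estimates and collect powers of $h$.
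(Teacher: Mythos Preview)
Your proposal is correct and follows essentially the same approach as the paper: invoke the discrete Babu\v{s}ka--Brezzi theory using boundedness (inherited from the continuous formulation), coercivity on $\ker B_h$ (\cref{thm:coercivity_condition_finial_2D}), and the decoupled discrete inf-sup conditions (\cref{thm:infsup_vector,thm:inf_sup_for_Sigma_and_U}) to obtain \cref{eq:MFEM_quasi_optimal_estimate}, then insert standard interpolation estimates component-wise. The paper's own proof is considerably more terse, simply citing these ingredients and ``standard interpolation estimates'' without the bookkeeping you spell out; your additional care regarding the zero-mean constraint on $p_h$ and the trace bounds for the boundary terms is appropriate but not treated explicitly there.
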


\begin{proof}
The boundedness of the bilinear operators follows from the continuous formulation; see \cite[Appendix A]{lewintan2025wellposedness}. 
The coercivity on the discrete kernel and the discrete inf-sup conditions are established in \cref{thm:coercivity_condition_finial_2D,thm:infsup_vector,thm:inf_sup_for_Sigma_and_U}.
Consequently, the standard Babuška-Brezzi theory implies the unisolvence 
of the Galerkin formulation \cref{eq:MFEM_Scheme}
and the quasi-optimal error estimate \cref{eq:MFEM_quasi_optimal_estimate}. 
The final \textit{a priori} error bound follows directly from standard interpolation estimates.
\end{proof}

Regarding the $L^2$ norm error estimate, which is often of greater practical significance, we anticipate a gain of one order of convergence compared to the energy norm. 
Here we establish this result using a duality argument, under the assumption of \textit{elliptic regularity} for the dual problem.

\begin{corollary}
\label{cor:L2_estimate}
Let $\boldsymbol{e}_{\mathcal{S}} = \boldsymbol{\mathcal{S}} - \boldsymbol{\mathcal{S}}_h$ and $\boldsymbol{e}_{\mathcal{U}} = \boldsymbol{\mathcal{U}} - \boldsymbol{\mathcal{U}}_h$. Assume that the dual problem \cref{eq:dual_1}-\cref{eq:dual_2} (defined in the proof below) admits the elliptic regularity property
\begin{align}
\label{eq:dual_regularity}
\|\boldsymbol{T}\|_2 + \|\boldsymbol{W}\|_1 \lesssim \|\boldsymbol{e}_{\boldsymbol{\cal S}}\|_0.
\end{align}
Then, the following estimate holds:
\begin{equation}
\label{eq:L2_estimate_concrete}
\|\boldsymbol{\sigma}-\boldsymbol{\sigma}_h\|_0
+\|\boldsymbol{s}-\boldsymbol{s}_h\|_0
+\|{p}-{p}_h\|_0
\lesssim
h^{d}|{p}|_{d}+
h^{d+1}\big(
|\boldsymbol{\sigma}|_{d+1}
+
|\boldsymbol{s}|_{d+1}
+|\boldsymbol{u}|_{d}
+|\theta|_{d}
\big).
\end{equation}
\end{corollary}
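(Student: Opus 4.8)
The plan is to run the Aubin--Nitsche duality argument adapted to the saddle-point structure of \cref{eq:MFEM_Scheme}, upgrading the energy estimate of \cref{thm:energy_norm_estimate} by one power of $h$ in the $L^2$ norm of $\boldsymbol{e}_{\mathcal{S}}=(\boldsymbol{\sigma}-\boldsymbol{\sigma}_h,\boldsymbol{s}-\boldsymbol{s}_h,p-p_h)$. I would first write the linearized R13 weak form compactly as $\mathcal{M}\big((\boldsymbol{\mathcal{S}},\boldsymbol{\mathcal{U}}),(\boldsymbol{\mathcal{R}},\boldsymbol{\mathcal{V}})\big)\coloneqq\mathcal{A}(\boldsymbol{\mathcal{S}},\boldsymbol{\mathcal{R}})+\mathcal{B}(\boldsymbol{\mathcal{R}},\boldsymbol{\mathcal{U}})+\mathcal{B}(\boldsymbol{\mathcal{S}},\boldsymbol{\mathcal{V}})$, so that both the continuous problem and \cref{eq:MFEM_Scheme} read $\mathcal{M}(\cdot,(\boldsymbol{\mathcal{R}},\boldsymbol{\mathcal{V}}))=\mathcal{F}(\boldsymbol{\mathcal{R}})$ on $\mathrm{T}\times\mathrm{W}$ and $\mathrm{T}_h\times\mathrm{W}_h$, respectively. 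Since $\mathrm{T}_h\times\mathrm{W}_h$ is a conforming subspace, subtracting the two relations gives the Galerkin orthogonality $\mathcal{M}\big((\boldsymbol{e}_{\mathcal{S}},\boldsymbol{e}_{\mathcal{U}}),(\boldsymbol{\mathcal{R}}_h,\boldsymbol{\mathcal{V}}_h)\big)=0$ for every $(\boldsymbol{\mathcal{R}}_h,\boldsymbol{\mathcal{V}}_h)\in\mathrm{T}_h\times\mathrm{W}_h$.

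Next I introduce the dual problem: find $(\boldsymbol{T},\boldsymbol{W})\in\mathrm{T}\times\mathrm{W}$ such that
\begin{align}
\mathcal{A}(\boldsymbol{\mathcal{R}},\boldsymbol{T})+\mathcal{B}(\boldsymbol{\mathcal{R}},\boldsymbol{W}) &= (\boldsymbol{\sigma}-\boldsymbol{\sigma}_h,\boldsymbol{\tau})+(\boldsymbol{s}-\boldsymbol{s}_h,\boldsymbol{r})+(p-p_h,q) \quad \forall\,\boldsymbol{\mathcal{R}}=(\boldsymbol{\tau},\boldsymbol{r},q)\in\mathrm{T}, \label{eq:dual_1}\\
\mathcal{B}(\boldsymbol{T},\boldsymbol{\mathcal{V}}) &= 0 \quad \forall\,\boldsymbol{\mathcal{V}}\in\mathrm{W}, \label{eq:dual_2}
\end{align}
equivalently $\mathcal{M}\big((\boldsymbol{\mathcal{R}},\boldsymbol{\mathcal{V}}),(\boldsymbol{T},\boldsymbol{W})\big)=(\boldsymbol{e}_{\mathcal{S}},\boldsymbol{\mathcal{R}})$, the $L^2$-pairing on the $\mathrm{T}$-components. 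I would then note that \cref{eq:dual_1,eq:dual_2} is well posed: the inf-sup condition for $\mathcal{B}$ is self-adjoint, and on $\ker\mathcal{B}$ the skew-symmetric contributions $c(\boldsymbol{s},\boldsymbol{\tau})-c(\boldsymbol{r},\boldsymbol{\sigma})$ of $\mathcal{A}$ cancel on the diagonal, so $\mathcal{A}$ is coercive there; hence the Brezzi conditions hold for the adjoint with the same constants as the primal problem (boundedness, including the surface integrals, from \cite[Appendix A]{lewintan2025wellposedness}). The assumed elliptic regularity \cref{eq:dual_regularity} then places $(\boldsymbol{T},\boldsymbol{W})$ in $H^2(\Omega;\mathbb{T})\times H^2(\Omega;\mathbb{R}^d)\times\tilde{H}^2(\Omega)\times H^1(\Omega;\mathbb{R}^d)\times H^1(\Omega)$ with $\|\boldsymbol{T}\|_2+\|\boldsymbol{W}\|_1\lesssim\|\boldsymbol{e}_{\mathcal{S}}\|_0$.

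Testing \cref{eq:dual_1,eq:dual_2} with $(\boldsymbol{\mathcal{R}},\boldsymbol{\mathcal{V}})=(\boldsymbol{e}_{\mathcal{S}},\boldsymbol{e}_{\mathcal{U}})$ gives $\|\boldsymbol{e}_{\mathcal{S}}\|_0^2=\mathcal{M}\big((\boldsymbol{e}_{\mathcal{S}},\boldsymbol{e}_{\mathcal{U}}),(\boldsymbol{T},\boldsymbol{W})\big)$; subtracting $\mathcal{M}\big((\boldsymbol{e}_{\mathcal{S}},\boldsymbol{e}_{\mathcal{U}}),(\boldsymbol{T}_h,\boldsymbol{W}_h)\big)=0$ (Galerkin orthogonality) for an arbitrary $(\boldsymbol{T}_h,\boldsymbol{W}_h)\in\mathrm{T}_h\times\mathrm{W}_h$ and invoking boundedness of $\mathcal{A}$, $\mathcal{B}$ on $\mathrm{T}\times\mathrm{W}$ (including the boundary terms) yields $\|\boldsymbol{e}_{\mathcal{S}}\|_0^2\lesssim\|(\boldsymbol{e}_{\mathcal{S}},\boldsymbol{e}_{\mathcal{U}})\|_{\mathrm{T}\times\mathrm{W}}\,\|(\boldsymbol{T}-\boldsymbol{T}_h,\boldsymbol{W}-\boldsymbol{W}_h)\|_{\mathrm{T}\times\mathrm{W}}$. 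Choosing $(\boldsymbol{T}_h,\boldsymbol{W}_h)$ to be componentwise Scott--Zhang interpolants of $(\boldsymbol{T},\boldsymbol{W})$ into $\mathrm{T}_h$ and $\mathrm{W}_h$ (using the continuous $\mathbb{P}_d$ subspace of $\Sigma_h$ for the stress component, and correcting the pressure-component interpolant by its mean, which is $O(h^2)$, so that it lies in $\tilde{H}^1$), the regularity $\boldsymbol{T}\in H^2$, $\boldsymbol{W}\in H^1$ together with standard estimates gives $\|(\boldsymbol{T}-\boldsymbol{T}_h,\boldsymbol{W}-\boldsymbol{W}_h)\|_{\mathrm{T}\times\mathrm{W}}\lesssim h\big(\|\boldsymbol{T}\|_2+\|\boldsymbol{W}\|_1\big)\lesssim h\,\|\boldsymbol{e}_{\mathcal{S}}\|_0$.

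Combining and cancelling one factor $\|\boldsymbol{e}_{\mathcal{S}}\|_0$ gives $\|\boldsymbol{e}_{\mathcal{S}}\|_0\lesssim h\,\|(\boldsymbol{e}_{\mathcal{S}},\boldsymbol{e}_{\mathcal{U}})\|_{\mathrm{T}\times\mathrm{W}}$; inserting the energy bound $\|(\boldsymbol{e}_{\mathcal{S}},\boldsymbol{e}_{\mathcal{U}})\|_{\mathrm{T}\times\mathrm{W}}\lesssim h^{d-1}|p|_d+h^d(|\boldsymbol{\sigma}|_{d+1}+|\boldsymbol{s}|_{d+1}+|\boldsymbol{u}|_d+|\theta|_d)$ from \cref{thm:energy_norm_estimate} and using $\|\boldsymbol{e}_{\mathcal{S}}\|_0^2=\|\boldsymbol{\sigma}-\boldsymbol{\sigma}_h\|_0^2+\|\boldsymbol{s}-\boldsymbol{s}_h\|_0^2+\|p-p_h\|_0^2$ then reproduces \cref{eq:L2_estimate_concrete}. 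I expect the only nonroutine points to lie in the duality step: verifying that the dual problem stays well posed despite $\mathcal{A}$ being non-symmetric (handled by the cancellation of the $c$-terms on $\ker\mathcal{B}$), and keeping track of the surface terms of $\mathcal{A}$, $\mathcal{B}$ in the continuity estimate, which obliges the dual interpolant to be $H^1$-conforming so that its traces are controlled; the remaining work is routine interpolation plus the energy estimate already in hand.
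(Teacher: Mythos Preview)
Your proposal is correct and follows essentially the same Aubin--Nitsche duality argument as the paper: introduce the dual problem \cref{eq:dual_1}--\cref{eq:dual_2}, test with $(\boldsymbol{e}_{\mathcal{S}},\boldsymbol{e}_{\mathcal{U}})$, subtract Galerkin orthogonality, bound by continuity times an $O(h)$ interpolation error of $(\boldsymbol{T},\boldsymbol{W})$, cancel one factor of $\|\boldsymbol{e}_{\mathcal{S}}\|_0$ using \cref{eq:dual_regularity}, and feed in the energy estimate from \cref{thm:energy_norm_estimate}. Your additional remarks on the adjoint well-posedness, the cancellation of the skew $c$-terms, and the explicit Scott--Zhang interpolant with mean correction are all sound elaborations that the paper leaves implicit.
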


\begin{proof}
Consider the dual problem: seek $(\boldsymbol{T}, \boldsymbol{W}) \in \mathrm{T} \times \mathrm{W}$ such that
\begin{alignat}{2}
\mathcal{A}(\boldsymbol{\mathcal{R}}, \boldsymbol{T}) + \mathcal{B}(\boldsymbol{\mathcal{R}}, \boldsymbol{W}) &= (\boldsymbol{e}_{\mathcal{S}}, \boldsymbol{\mathcal{R}}) \quad &\forall\, \boldsymbol{\mathcal{R}} \in \mathrm{T}, \label{eq:dual_1} \\
\mathcal{B}(\boldsymbol{T}, \boldsymbol{\mathcal{V}}) &= 0 \quad &\forall\, \boldsymbol{\mathcal{V}} \in \mathrm{W}. \label{eq:dual_2}
\end{alignat}
By testing \cref{eq:dual_1,eq:dual_2} with $\boldsymbol{\mathcal{R}}=\boldsymbol{e}_{\boldsymbol{\cal S}}$ and $\boldsymbol{\mathcal{V}} = \boldsymbol{e}_{\boldsymbol{\cal U}}$, and employing the Galerkin orthogonality, we obtain
\begin{equation}
\begin{aligned}
\|\boldsymbol{e}_{\boldsymbol{\cal S}}\|_0^2 
&= \mathcal A(\boldsymbol{e}_{\boldsymbol{\mathcal{S}}},\boldsymbol{T}-\boldsymbol{T}_h) 
- \mathcal B(\boldsymbol{T}_h,\boldsymbol{e}_{\boldsymbol{\cal U}}) 
+ \mathcal B(\boldsymbol{e}_{\boldsymbol{\cal S}},\boldsymbol{W}-\boldsymbol{W}_h)\\
&= \mathcal A(\boldsymbol{e}_{\boldsymbol{\mathcal{S}}},\boldsymbol{T}-\boldsymbol{T}_h) 
+ \mathcal B(\boldsymbol{T}-\boldsymbol{T}_h,\boldsymbol{e}_{\boldsymbol{\cal U}}) 
+ \mathcal B(\boldsymbol{e}_{\boldsymbol{\cal S}},\boldsymbol{W}-\boldsymbol{W}_h)\\
&\lesssim \left( \|\boldsymbol{e}_{\boldsymbol{\cal{S}}}\|_{\mathrm{T}} + \|\boldsymbol{e}_{\boldsymbol{\cal{U}}}\|_{\mathrm{W}} \right) \cdot
h\left( \|\boldsymbol{T}\|_2 + \|\boldsymbol{W}\|_1\right). 
\end{aligned}
\end{equation}
Applying the regularity assumption \cref{eq:dual_regularity} allows us to cancel one factor of $\|\boldsymbol{e}_{\boldsymbol{\cal S}}\|_0$, yielding
$
\|\boldsymbol{e}_{\boldsymbol{\cal S}}\|_0 \lesssim h \left( \|\boldsymbol{e}_{\mathcal{S}}\|_{\mathrm{T}} + \|\boldsymbol{e}_{\boldsymbol{\cal{U}}}\|_{\mathrm{W}} \right).
$
Combining this result with the energy norm estimates in \cref{thm:energy_norm_estimate} gives the desired bound \cref{eq:L2_estimate_concrete}.
\end{proof}

\begin{remark}
The global convergence rates in \cref{thm:energy_norm_estimate,cor:L2_estimate} are limited by the pressure error. 
Such behavior reflects a necessary trade-off between conformity and stability under a specific choice of $H^1$ pressure and $L^2$ velocity spaces in the current weak form. 
\end{remark}

\section{Numerical experiments}
\label{sec:numerical_results}

This section presents numerical results for the MFEM \cref{eq:MFEM_Scheme} with finite element spaces in \cref{eq:FESpaces}. We demonstrate the stability and convergence of the method, with $\tilde{\chi}=1$ for all test cases. The scheme is implemented using the \texttt{MATLAB} package \texttt{ifem} \cite{chen2009ifem}.

\subsection{Homogeneous flow around a cylinder}
We first test Couette-Fourier flows within an annular domain $\Omega\subset\mathbb{R}^2$ defined by
\begin{equation}
    \label{eq:ring_domain}
    \Omega:=\left\{\mathbf{x}=(x,y)^{T}:R_1\le \|\mathbf{x}\|_2\le R_2\right\},
\end{equation}
with $R_1=0.5$ and $R_2=2$. 
Simulations are performed on body-fitted triangular meshes generated via the \texttt{distmesh2d} function in \texttt{ifem}. 

\begin{example}
This case simulates Couette-Fourier flow driven by two uniformly rotating cylinders held at constant temperatures. The parameters are defined as:
\begin{equation}
\label{eq:params_case_1}
\begin{aligned}
&u_{t}^{W}\big|_{\|\mathbf{x}\| = R_1}=u_{t}^{W}\big|_{\|\mathbf{x}\| = R_2}=1,\quad \theta^{W}\big|_{\|\mathbf{x}\| = R_1}=\theta^{W}\big|_{\|\mathbf{x}\| = R_2}=1,\\
&\text{Kn}=0.05,0.1,0.2,0.4,\quad
h=0.2, 0.1, 0.05, 0.025,0.0125,\quad\text{respectively}.
\end{aligned}
\end{equation}
We define the \textit{absolute $L^2$ error} as
\begin{equation}
    e_{f}:=\|f_{\text{exact}}-f_h\|_{0}.
\end{equation}
The analytical solution is derived from \cite[Appendix A]{saravanakumar2020investigation}. \Cref{fig:Error_Estimation} plots the errors against the mesh size $h$. The results confirm the second-order convergence of the proposed scheme across various Knudsen numbers.
\begin{figure}[!htbp]
    \centering
    \includegraphics[width=0.95\linewidth]{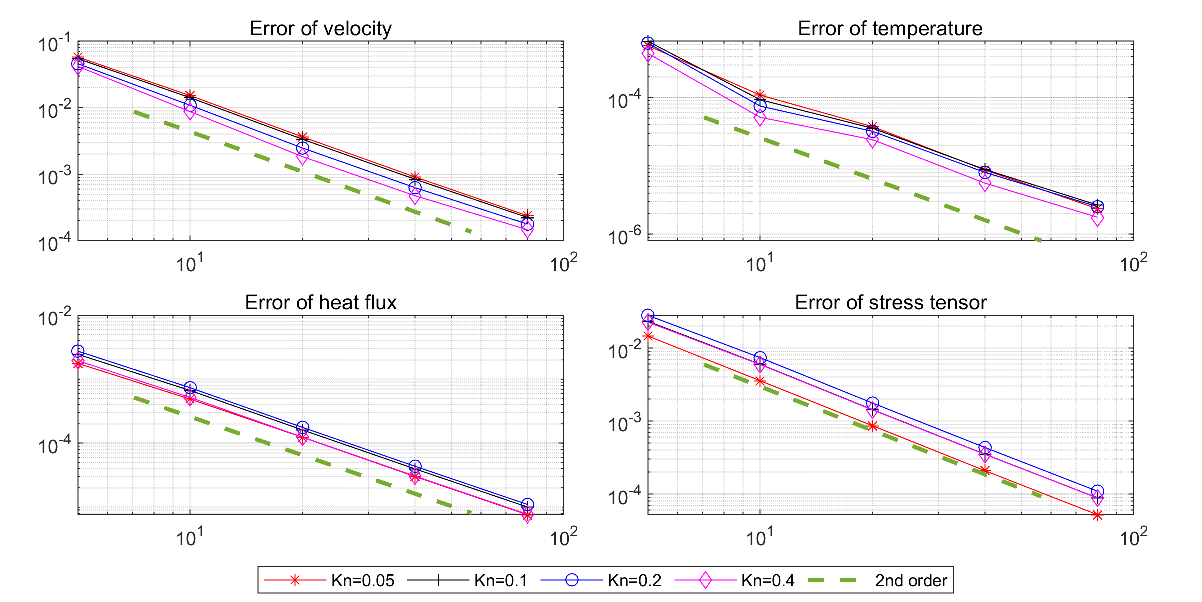}
    \caption{The absolute $L^2$ error for each unknown in the R13 system.}
    \label{fig:Error_Estimation}
\end{figure}
\end{example}

\begin{example}
Next, we investigate the Couette-Fourier flow where the outer wall temperature differs from the inner wall. The parameters are: 
\begin{equation}
\label{eq:params_case_2}
\begin{aligned}
&u_{t}^{W}\big|_{\|\mathbf{x}\| = R_1}=u_{t}^{W}\big|_{\|\mathbf{x}\| = R_2}=1,\quad \theta^{W}\big|_{\|\mathbf{x}\| = R_1}=1,\quad \theta^{W}\big|_{\|\mathbf{x}\| = R_2}=2,
\\
&\text{Kn}=0.1,\quad h=0.1.
\end{aligned}
\end{equation}

We compare the proposed scheme with the unenriched $\mathbb{P}_2$-$\mathbb{P}_2$-$\mathbb{P}_1$-$\mathbb{P}_2$-$\mathbb{P}_1$ element. 
Since the unenriched formulation yields a singular linear system, we pick the solution with the minimum norm.
The resulting $\theta$, $u_x$ and $u_y$ profiles are presented in \cref{fig:case2}. As detailed in \cref{fig:case2_fig2}, the velocity field exhibits spurious oscillations.
This confirms the necessity of stabilizing $\Sigma_h$ via enrichment with bubble functions,
or the penalty terms proposed in \cite{theisen2021fenics}.
\begin{figure}[!htbp]
\centering
\subfloat[Numerical solutions for $\theta, u_x, u_y$ by $\mathbb{P}_2^b$-$\mathbb{P}_2$-$\mathbb{P}_1$-$\mathbb{P}_2$-$\mathbb{P}_1$ element.]
{\includegraphics[width=0.95\textwidth]{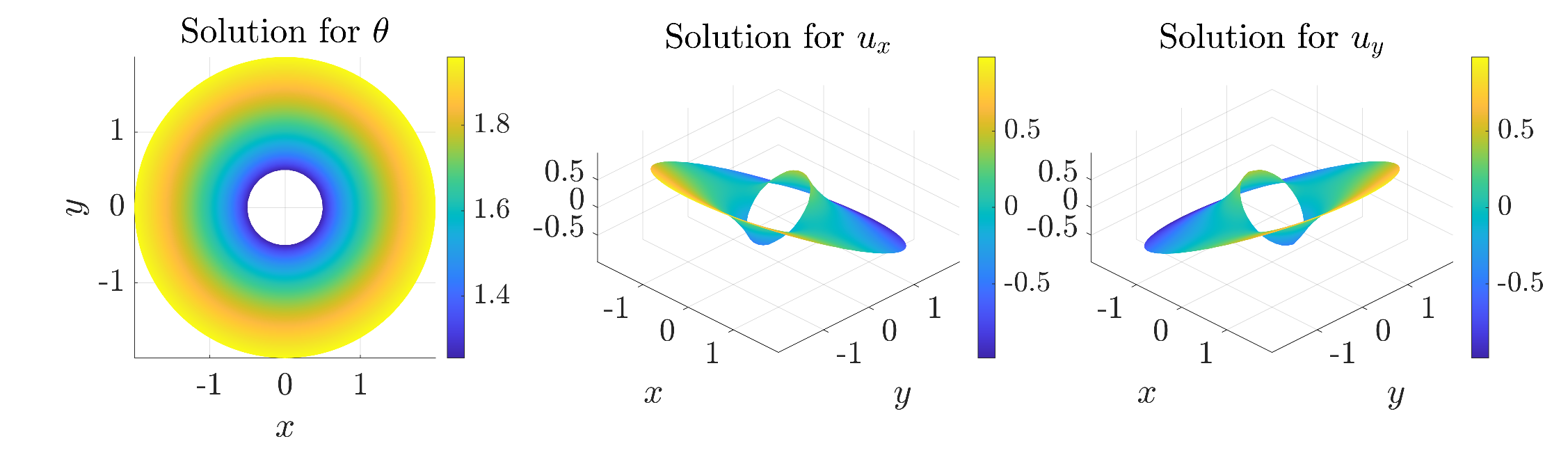} 
\label{fig:case2_fig1}
}\\       
\subfloat[Numerical solutions for $\theta, u_x, u_y$ by  $\mathbb{P}_2$-$\mathbb{P}_2$-$\mathbb{P}_1$-$\mathbb{P}_2$-$\mathbb{P}_1$ element.]
{\includegraphics[width=0.95\textwidth]{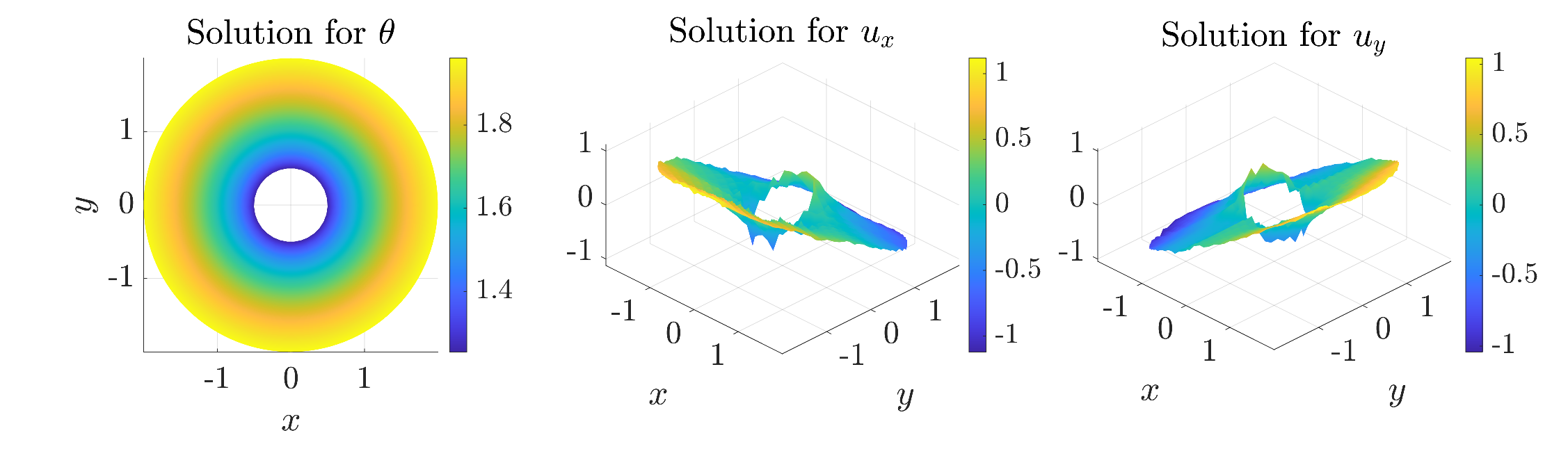} 
\label{fig:case2_fig2}
}
\caption{Temperature and velocity distributions for Couette-Fourier flow around a cylinder.}
\label{fig:case2}
\end{figure}    

\end{example}

\subsection{Channel flow within a square cavity}
We consider the channel flow within a square cavity. 
The problem domain is the unit square $\Omega:=(0,1)^2$ using uniform triangulations.
The boundary $\partial\Omega$ is partitioned into the bottom wall $\Gamma_1$ and the remainder $\Gamma_2$, with
$\Gamma_1:=\{(x,y):y=0,\, x\in(0,1)\}$, $\Gamma_2:=\partial\Omega\setminus\Gamma_1$.
Note that singularities will arise in the solution at the lower corners.

\begin{example}
We simulate Fourier flow with fixed walls ($u_{t}^{W}=0$). 
The bottom boundary $\Gamma_1$ is heated, while $\Gamma_2$ is held at a lower temperature.  
We vary the Knudsen number $\text{Kn}$ to analyze heat conduction regimes using the following parameters:
\begin{equation}
    \label{eq:params_case_3}
    \begin{aligned}
&u_{t}^{W}|_{\Gamma_1}=u_{t}^{W}|_{\Gamma_2}=0,\quad \theta^{W}|_{\Gamma_1}=1,\quad\theta^{W}|_{\Gamma_2}=0,\\
    &\text{Kn}=0.01, 0.05, 0.2,\text{ respectively,} \quad h=0.02.
    \end{aligned}
\end{equation}

\begin{figure}[!htbp]
\centering
\includegraphics[width=0.95\linewidth]{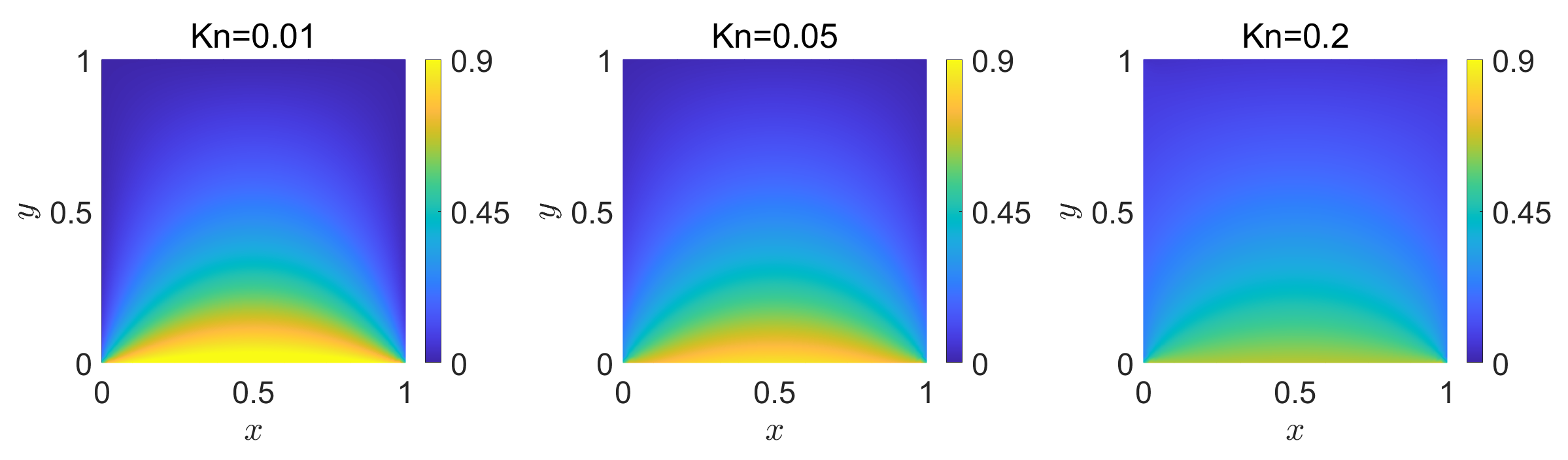}
\caption{Temperature distributions for Fourier flow in a square cavity.}
\label{fig:case3}
\end{figure}

\Cref{fig:case3} displays the temperature distributions for various values of $\text{Kn}$. As expected physically, heat transfer is more efficient in the continuum regime (lower $\text{Kn}$), resulting in deeper thermal propagation from $\Gamma_1$. The proposed scheme accurately captures this behavior, demonstrating its robustness in resolving the physics of the small $\text{Kn}$ limit.
\end{example}

\subsection{Thermally-induced edge flow}
\begin{example}
Finally, we investigate the thermally-induced edge flow, a benchmark problem from \cite[Section 6.3]{theisen2021fenics}. 
The computational domain $\Omega$ is a square with an inner square obstacle defined by $\Omega:=(0,8)^2\setminus [1,3]^2$.
Both the inner and outer boundaries are stationary no-slip walls. A temperature difference is imposed such that the outer boundary is maintained at a lower temperature than the inner boundary, and we set $\text{Kn}=0.001$. 
This choice falls within the near-continuum regime and is used to validate the asymptotic preservation properties of kinetic schemes \cite{su2020implicit,su2020fast}.
The parameters are summarized as:
\begin{equation}
\label{eq:params_case_5}
\begin{aligned}
&u_{t}^{W}|_{\text{inner}}=u_{t}^{W}|_{\text{outer}}=0,\quad \theta^{W}|_{\text{inner}}=0,\quad\theta^{W}|_{\text{outer}}=1,\quad\text{Kn}=0.001.
\end{aligned}
\end{equation}

First, we present the shear stress and thermal fields computed with a mesh size of $h=0.01$ in \cref{fig:case4}. The left panel displays a stable shear stress field $\sigma_{12}$, and the right panel depicts temperature $\theta$ contours and heat flux $\boldsymbol{s}$ streamlines consistent with Fourier's law.

\begin{figure}[!htbp]
\centering
\includegraphics[width=0.95\linewidth]{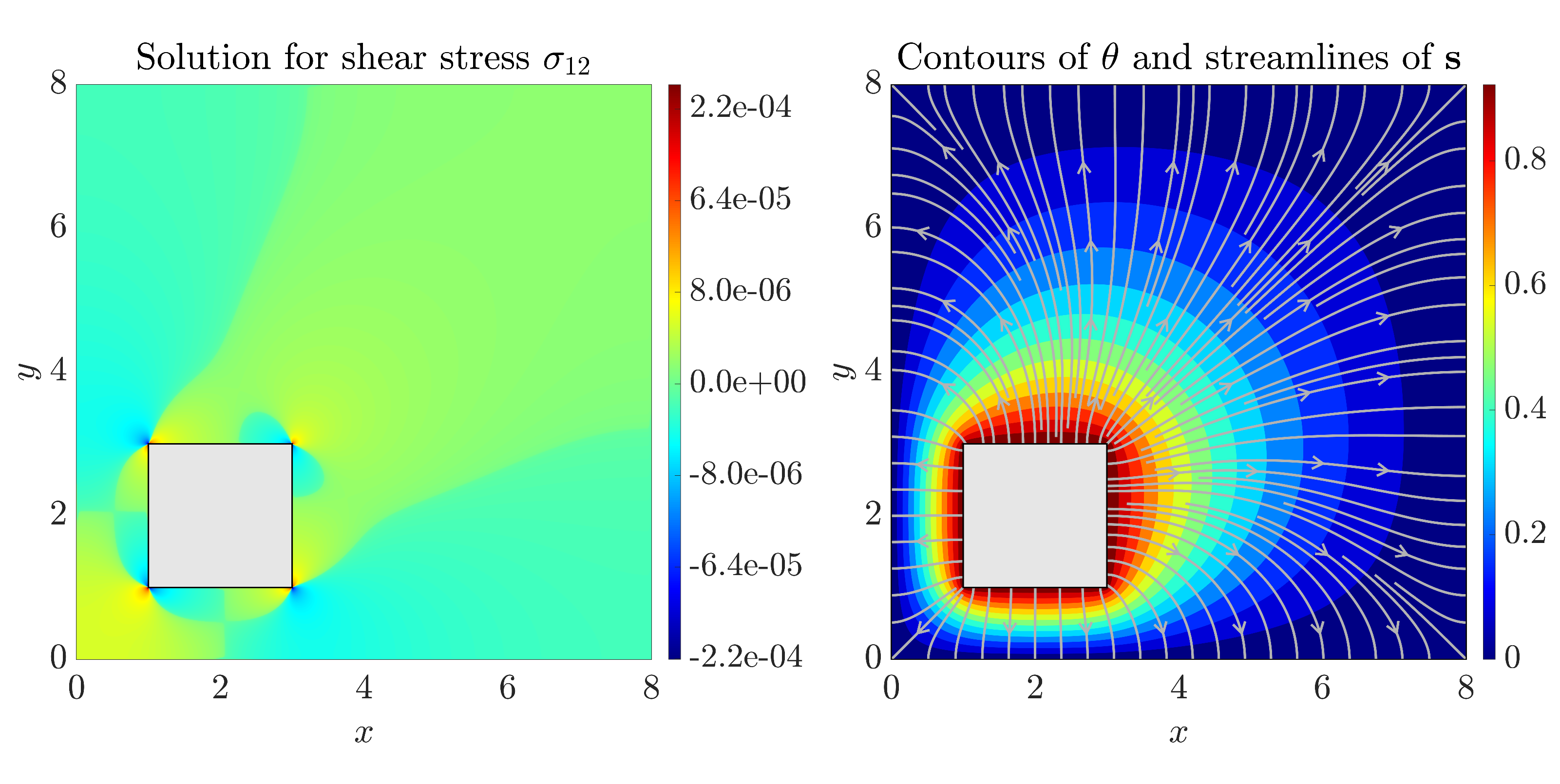}
\caption{Shear stress and heat distributions for the thermally-induced edge flow.}
\label{fig:case4}
\end{figure}

Next, we assess numerical stability by comparing our proposed scheme against the standard Taylor-Hood element ($\mathbb{P}_2$-$\mathbb{P}_1$-$\mathbb{P}_1$-$\mathbb{P}_2$-$\mathbb{P}_1$) on uniform meshes with $h=0.02$ and $h=0.01$. 
As illustrated in \cref{fig:case5}, the Taylor-Hood element exhibits significant instability, 
characterized by spurious checkerboard modes in the velocity magnitude. 
In contrast, the proposed scheme remains robust and stable under identical conditions, 
accurately capturing the rarefaction effects inherent in the edge flow.

\begin{figure}[!htbp]
\centering
\subfloat[Velocity magnitudes computed by $\mathbb{P}_2^b$-$\mathbb{P}_2$-$\mathbb{P}_1$-$\mathbb{P}_2$-$\mathbb{P}_1$ element.]
{\includegraphics[width=0.95\textwidth]{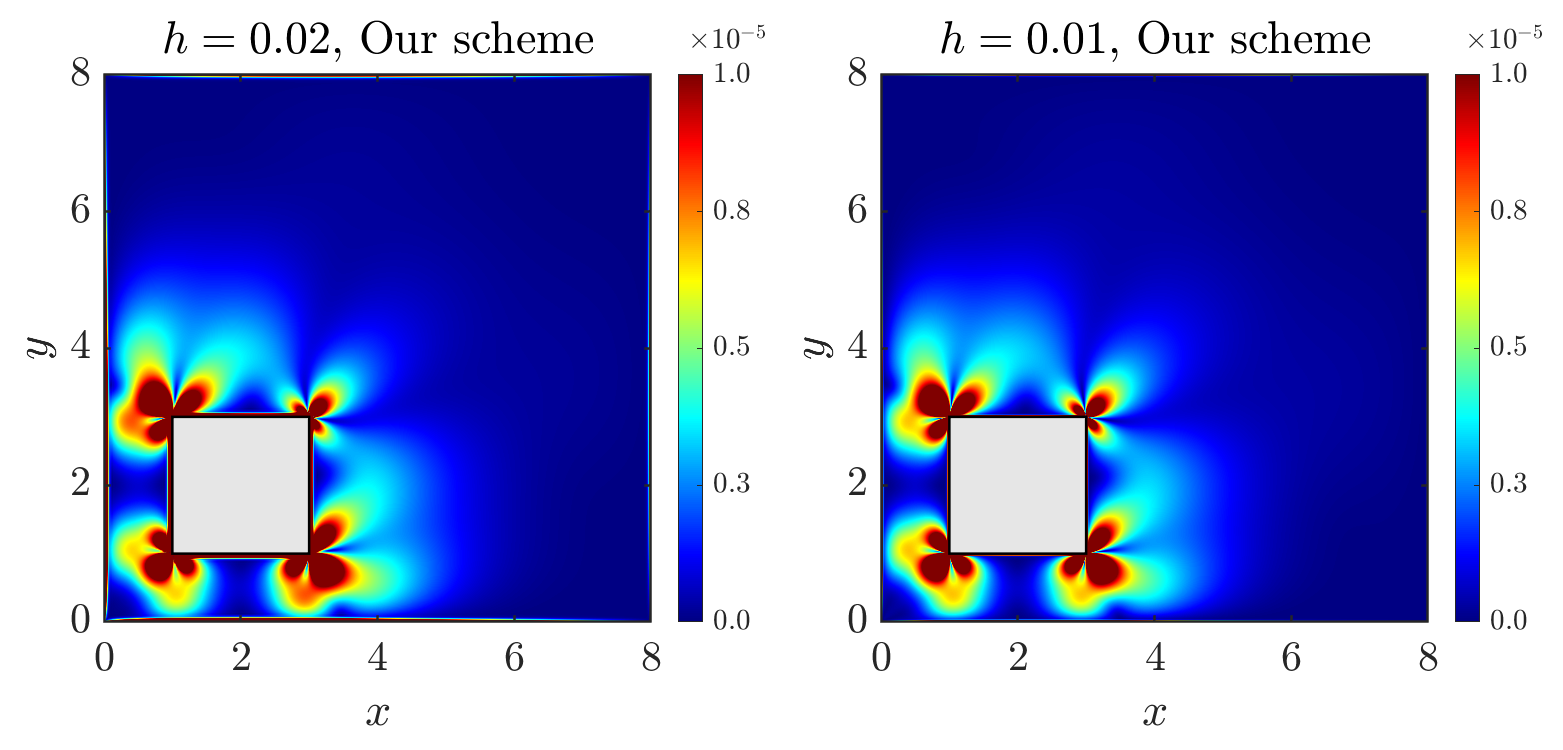}
\label{fig:case5_fig1}
}\\       
\subfloat[Velocity magnitudes computed by   $\mathbb{P}_2$-$\mathbb{P}_1$-$\mathbb{P}_1$-$\mathbb{P}_2$-$\mathbb{P}_1$ element.]
{\includegraphics[width=0.95\textwidth]{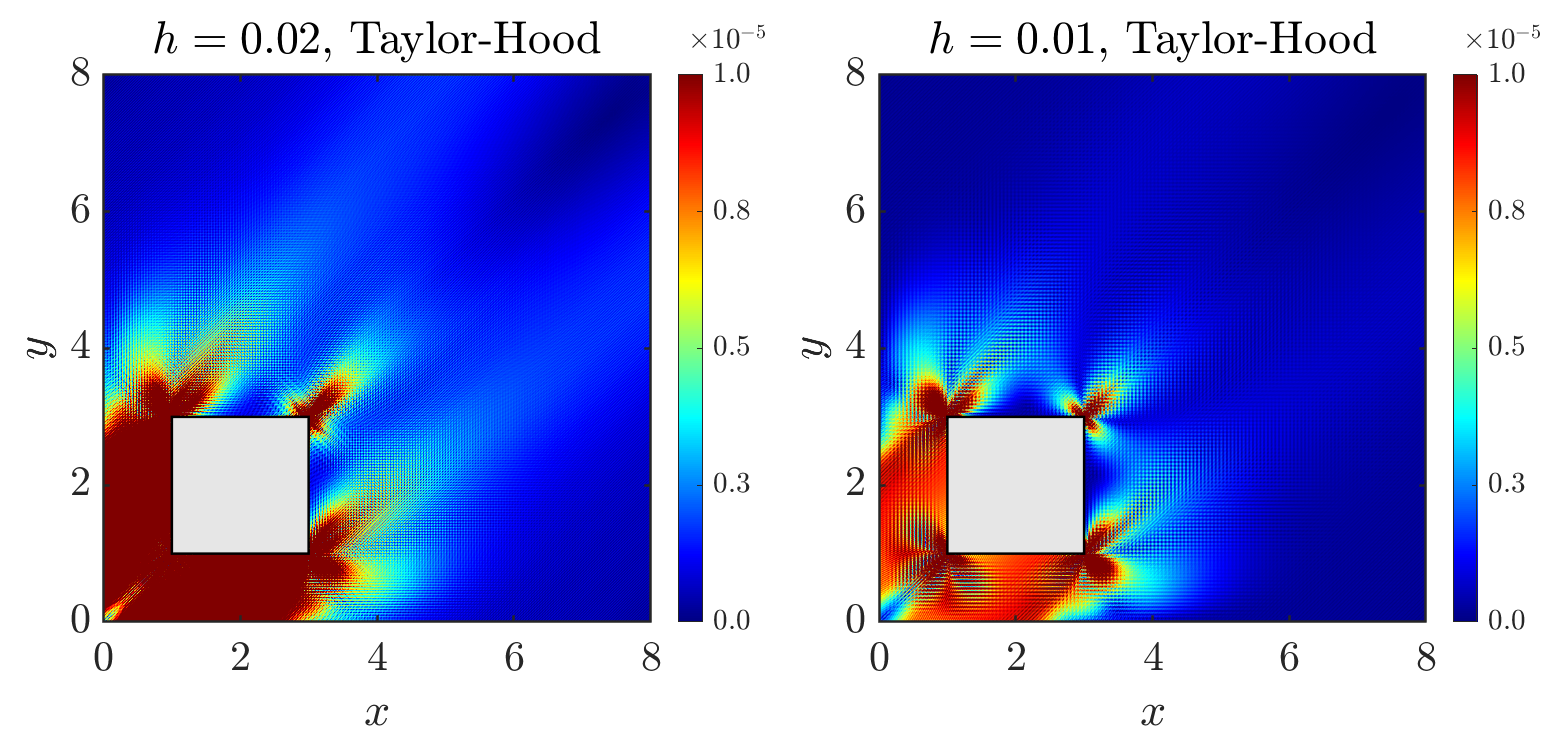} 
\label{fig:case5_fig2}
}
\caption{Velocity magnitude distributions for the thermally-induced edge flow.} 
\label{fig:case5}
\end{figure}  

Finally, we take a closer look at the solution on the fine mesh with $h=0.01$. 
\Cref{fig:case6-3} displays the velocity streamlines and profiles of vertical velocity $u_y$ at $y=0.5$ and $y=4.5$. 
While the standard Taylor-Hood element produces pronounced oscillations, our scheme successfully resolves the flow structure, demonstrating its effectiveness and accuracy in this challenging regime.

\begin{figure}[!htbp]
\centering
\subfloat[Stable velocity field computed by  $\mathbb{P}_2^b$-$\mathbb{P}_2$-$\mathbb{P}_1$-$\mathbb{P}_2$-$\mathbb{P}_1$ element.]
{
    \label{fig:case6_fig1}
    \includegraphics[width=0.95\textwidth]{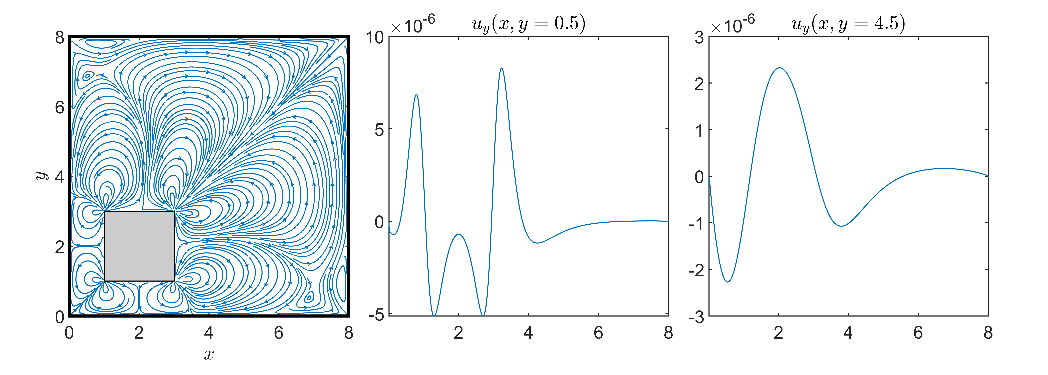}
} \\ 
\subfloat[Unstable velocity field computed by $\mathbb{P}_2$-$\mathbb{P}_1$-$\mathbb{P}_1$-$\mathbb{P}_2$-$\mathbb{P}_1$ element.]
{
    \label{fig:case6_fig2}
    \includegraphics[width=0.95\textwidth]{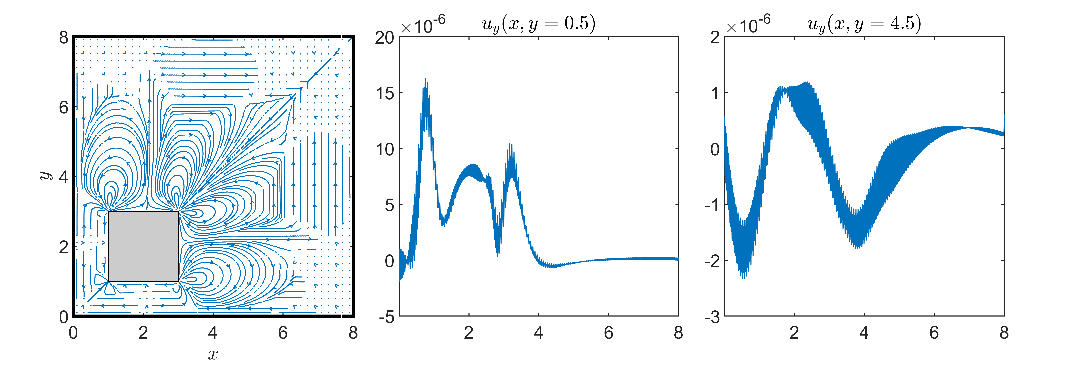}
}
\caption{Velocity streamlines and $u_y$ profiles for the thermally-induced edge flow.}
\label{fig:case6-3}
\end{figure}

\end{example}

\section{Conclusions}
\label{sec:conclusions}
In this paper, we have proposed and analyzed a novel conforming MFEM for the linear R13 equations. By enriching the tensor element space with specific bubble functions, we successfully establish a stable, convergent, and penalty-free scheme with rigorous theoretical analysis.

The proposed framework offers significant advantages for further research. First, our scheme can be extended to R13 systems with inflow or outflow boundary conditions. Second, for time-dependent R13 systems, as discussed in \cite{lin2025time}, designing stable schemes using CIP methods can be impractical due to parameter tuning; our penalty-free approach provides a more natural and robust foundation for such extensions. 
Third, the inherent stability of our scheme makes it highly suitable for simulating complex rarefied gas phenomena.

Given that the enriched space increases the number of degrees of freedom, future work will focus on developing efficient preconditioners for fast solvers and reduced-basis methods to mitigate computational costs. Additionally, extending this rigorous MFEM framework to time-dependent cases or to other R13 systems for general gas molecules, as described in \cite{Cai2024Onsager}, is a promising direction for future investigation.

\appendix
\section{Korn-type inequalities}
\label{sec:appendixA}

In this section, we review $\mathbb{C}$-ellipticity for first-order linear homogeneous differential operators and prove \cref{thm:Korn_inequality_stfD_negative_norm}.

\begin{definition}
\label{defn:ComplexEllip_3in1}
Let $V=\mathbb R^d$ and $W=\mathbb R^{d'}$.
A first-order linear homogeneous operator with constant coefficients
$
\mathbb{A} := \sum_{j=1}^{d} \mathbb{A}_{j} \partial_{j},
$
where $\mathbb{A}_{j} \in L(V, W)$,
is called $\mathbb{C}$-elliptic, if the corresponding symbol map 
$\mathbb A[\xi]:\mathbb C^d \to \mathbb C^{d'}$,
$
\mathbb{A}[\xi](v) := \sum_{j=1}^{d} \xi_{j} \mathbb{A}_{j}v
$
$(\xi \in  \mathbb C^d)$
has no common non-trivial complex zero.
That is,
\begin{equation}\label{eq:Complex_Elliptic}
\forall \xi \in \mathbb{C}^{d}, \quad
\mathbb A[\xi](v)={0}
\quad \Longrightarrow \quad v = {0}.
\end{equation}
\end{definition}

Then we have the following ellipticity condition for the operator $\mathrm{stf}\,\nabla$.

\begin{theorem}
    \label{thm:stfD_3D_C_elliptic_1_tensor}
    The operator $\mathrm{stf}\,\nabla$ acting on vector fields is $\mathbb{C}$-elliptic if and only if the dimension $d\ge 3$.
\end{theorem}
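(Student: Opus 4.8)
The plan is to compute the symbol map of $\mathrm{stf}\,\nabla$ explicitly and determine exactly when it admits a nontrivial complex null vector. For a vector field $\boldsymbol{v}\colon\mathbb{R}^d\to\mathbb{R}^d$, the symbol at $\xi\in\mathbb{C}^d$ is the map $v\mapsto \mathrm{stf}(v\otimes\xi) = \mathrm{sym}(v\otimes\xi) - \tfrac{1}{d}(v\cdot\xi)\boldsymbol{I}_d$, i.e. the matrix with entries $\tfrac{1}{2}(v_i\xi_j + v_j\xi_i) - \tfrac{1}{d}(v\cdot\xi)\delta_{ij}$. So I must decide: for which $d$ does $\mathrm{sym}(v\otimes\xi) = \tfrac{1}{d}(v\cdot\xi)\boldsymbol{I}_d$ force $v=0$? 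Note there is no a priori assumption that $\xi\neq 0$; if $\xi = 0$ the condition reads $\mathrm{sym}(v\otimes 0)=0$, which is automatic for every $v$, so $\mathbb{C}$-ellipticity can only possibly hold if we read the definition as requiring $v=0$ for \emph{all} $\xi$ including nonzero ones — and indeed the failure at $\xi=0$ is vacuous since the relevant obstruction is whether there is a \emph{common} zero, which I interpret (following the standard convention, e.g. that $\mathbb{A}[\xi]$ is tested on $\xi\neq 0$) as: for every $\xi\in\mathbb{C}^d\setminus\{0\}$, $\mathbb{A}[\xi]v=0\Rightarrow v=0$. I would state this reading explicitly at the start.

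For the ``if'' direction ($d\ge 3$), suppose $\xi\neq 0$ and $\mathrm{sym}(v\otimes\xi) = \tfrac{1}{d}(v\cdot\xi)\boldsymbol{I}_d$. Taking the trace of both sides gives $v\cdot\xi = v\cdot\xi$, no information; instead I would exploit the off-diagonal structure. The matrix $\mathrm{sym}(v\otimes\xi)$ has rank at most $2$ (it is a sum of two rank-one symmetric pieces), while $\tfrac{1}{d}(v\cdot\xi)\boldsymbol{I}_d$ has rank $0$ or $d$. For $d\ge 3$ these can be equal only if both are the zero matrix, so $v\cdot\xi = 0$ and $\mathrm{sym}(v\otimes\xi)=0$, i.e. $v_i\xi_j + v_j\xi_i = 0$ for all $i,j$. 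Contracting with $\bar\xi$ (or picking indices) and using $\xi\neq 0$ yields $v=0$: choose $k$ with $\xi_k\neq 0$; the $(k,k)$ entry gives $v_k\xi_k=0$ so $v_k=0$; the $(k,j)$ entry then gives $v_j\xi_k=0$ so $v_j=0$ for all $j$. Care is needed because $\xi$ is complex, but the rank argument is purely algebraic over $\mathbb{C}$ and goes through; I would phrase the rank bound in terms of the dimension of the column space over $\mathbb{C}$.

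For the ``only if'' direction I must exhibit, for $d=2$, a nonzero $\xi\in\mathbb{C}^2$ and nonzero $v\in\mathbb{C}^2$ with $\mathrm{sym}(v\otimes\xi) = \tfrac{1}{2}(v\cdot\xi)\boldsymbol{I}_2$. The natural candidate is $\xi = (1,i)$ (an isotropic vector, $\xi\cdot\xi=0$) and $v=\xi=(1,i)$: then $v\cdot\xi = 1 + i^2 = 0$, and $\mathrm{sym}(\xi\otimes\xi) = \xi\otimes\xi$ has entries $\xi_i\xi_j$, which is \emph{not} zero, so this choice fails — I would instead take $v = (1,-i)$, giving $v\cdot\xi = 1 - i^2 = 2 \neq 0$; then $\mathrm{sym}(v\otimes\xi)$ has $(1,1)$ entry $1$, $(2,2)$ entry $-i\cdot i = 1$, and $(1,2)$ entry $\tfrac12(i + (-i)) = 0$, so $\mathrm{sym}(v\otimes\xi) = \boldsymbol{I}_2 = \tfrac12\cdot 2\cdot\boldsymbol{I}_2$. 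This is precisely the nontrivial zero, so $\mathrm{stf}\,\nabla$ fails to be $\mathbb{C}$-elliptic in 2D. (This is also the well-known fact that in 2D the trace-free symmetric gradient is conformally covariant, with holomorphic/antiholomorphic maps in its kernel.) The only genuinely delicate point in the whole argument is getting the rank/dimension bookkeeping right over $\mathbb{C}$ in the $d\ge 3$ case — in particular ruling out the scenario where $\mathrm{sym}(v\otimes\xi)$ and $\tfrac1d(v\cdot\xi)\boldsymbol{I}_d$ agree as a nonzero rank-$2$-or-less matrix against a full-rank scalar matrix — but since $2 < d$ forces both sides to vanish, this is clean. I would close by noting the 3D inequality needed in Theorem~\ref{thm:inv_divergence_new_proof} then follows from this $\mathbb{C}$-ellipticity via the standard negative-norm Korn inequality for $\mathbb{C}$-elliptic operators.
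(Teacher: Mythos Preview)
Your proof is correct and essentially equivalent to the paper's in the $d=2$ direction (your counterexample $v=(1,-i)$, $\xi=(1,i)$ and the paper's $v=(i,1)$, $\xi=(i,-1)$ are the same isotropic-vector phenomenon up to scaling), but your $d\ge 3$ argument takes a genuinely different and cleaner route. The paper expands $\mathrm{stf}(v\otimes\xi)=0$ into the component equations $v_i\xi_i=v_j\xi_j$ and $v_i\xi_j=-v_j\xi_i$ ($i\neq j$), then performs a case analysis according to whether $\xi$ has any zero components, chasing indices in each case. Your rank argument bypasses this entirely: since $\mathrm{sym}(v\otimes\xi)=\tfrac12(v\otimes\xi+\xi\otimes v)$ has rank at most $2$ over $\mathbb{C}$ while a nonzero multiple of $\boldsymbol{I}_d$ has rank $d$, for $d\ge3$ both sides of $\mathrm{sym}(v\otimes\xi)=\tfrac1d(v\cdot\xi)\boldsymbol{I}_d$ must vanish, and then $v=0$ follows from a single index pick. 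This is more conceptual and avoids cases; the paper's version is more hands-on but slightly longer. One minor remark: drop the ``contracting with $\bar\xi$'' aside, since over $\mathbb{C}$ the relevant form is bilinear without conjugation and that contraction does not directly help; your index-picking argument is the right one and already suffices.
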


\begin{proof}
To establish the $\mathbb{C}$-ellipticity of $P:=\mathrm{stf}\,\nabla$, it suffices to show that its symbol is injective for any $\xi\in\mathbb{C}^d \setminus \{0\}$. As noted in \cite[Remark 3.4]{lewintan2025wellposedness}, the symbol is given by $P[\xi](v)=\mathrm{stf}(v\otimes\xi)$; therefore, injectivity is equivalent to the condition:
\begin{equation}
\label{eq:Cellipforvec}
\mathrm{stf}(v\otimes \xi)=0 \quad \Longrightarrow \quad v=0.
\end{equation}
Let $v=(v_{i})$ and $\xi=(\xi_{j})$. The condition \eqref{eq:Cellipforvec} implies:
\begin{equation}
\label{eq:Cellip_expanded}
\begin{aligned}
v_{i}\xi_{i} = v_{j}\xi_{j},~ 
v_{i}\xi_{j} = -v_{j}\xi_{i},\quad\forall i\neq j.
\end{aligned}
\end{equation}

If $d=2$, the vectors $v=(i,1)^T$ and $\xi = (i,-1)^{T}$ serve as a counterexample for \eqref{eq:Cellip_expanded}.
If $d\ge 3$, consider two cases. First, suppose $\xi$ contains zero components. Up to permutation, let $\xi_1=\xi_{2}=\ldots=\xi_{j}=0$ and $\xi_{l}\neq 0$ for all $l>j$. Since $v_{i}\xi_{i} \equiv v_{l}\xi_{l}$ implies $v_{l}\xi_{l}=0$ (as we can pick an index from the zero components), and $\xi_l \neq 0$, it follows that $v_{l}=0$ for all $l>j$. For the indices $k \le j$, we have $v_{k}\xi_{j+1} = -v_{j+1}\xi_{k}$. Since $v_{j+1}=0$ and $\xi_{j+1}\neq 0$, this implies $v_{k}=0$ and thus $v=0$.
Second, assume all components of $\xi$ are non-zero. Then the latter equation in \eqref{eq:Cellip_expanded} combined with the first implies that for any $i\neq j$, $v_i^2 \xi_j = -v_j \xi_i v_i = -v_i \xi_i v_j = -v_j \xi_j v_j$, which simplifies to $v_{i}^2+v_{j}^2=0$ and leads to $v=0$.
\end{proof}

The following theorem shows the relationship between $\mathbb{C}$-ellipticity and Korn-type inequalities.
\begin{theorem}
\label{thm:Korn2nd}
Let $\Omega\subset\mathbb{R}^3$ be a bounded Lipschitz domain, $\mathbb{A}$ is a first-order linear homogeneous operator with constant coefficients, then the following inequality holds if and only if $\mathbb{A}$ is a $\mathbb{C}$-elliptic operator:
\begin{equation}
\|\mathbf{u}\|_s\lesssim \|\mathbf{u}\|_{t}+\|\mathbb{A}\mathbf{u}\|_{s-1},\quad{\rm for\,} t<s.
\end{equation}
\end{theorem}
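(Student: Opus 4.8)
The plan is to establish the two implications in \cref{thm:Korn2nd} separately. The forward one, $\mathbb{C}$-ellipticity $\Rightarrow$ the Korn-type inequality, is the substantial direction and the one needed downstream through \cref{thm:stfD_3D_C_elliptic_1_tensor}; the reverse one is a soft compactness argument.

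For the forward direction I would begin from the algebraic characterization of $\mathbb{C}$-ellipticity going back to Smith. Writing the symbol as $\mathbb{A}[\xi]=\sum_j\xi_j\mathbb{A}_j$, a $d'\times d$ matrix (necessarily $d'\ge d$ for injectivity to be possible) whose entries are linear in $\xi$, $\mathbb{C}$-ellipticity says that its maximal minors $m_1(\xi),\dots,m_r(\xi)$, homogeneous of degree $d$, vanish simultaneously in $\mathbb{C}^d$ only at $\xi=0$. Hilbert's Nullstellensatz then furnishes $N\in\mathbb{N}$ and homogeneous polynomials $p_i$ with $|\xi|^{2N}=\sum_i p_i(\xi)\,m_i(\xi)$, and Cramer's rule promotes this to a matrix identity $\mathbb{L}[\xi]\,\mathbb{A}[\xi]=|\xi|^{2N}\,\mathrm{Id}_V$ for a homogeneous symbol $\mathbb{L}$ of degree $2N-1$; passing to constant-coefficient operators gives $\mathbb{L}\mathbb{A}=(-\Delta)^N\,\mathrm{Id}_V$. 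Standard Fourier/Calderón--Zygmund bounds (here just $L^2$-multiplier estimates) then control the top-order seminorm of compactly supported fields on $\mathbb{R}^d$, namely $\|D^k u\|_{L^2(\mathbb{R}^d)}\lesssim\|\mathbb{A}u\|_{H^{k-1}(\mathbb{R}^d)}$. Upgrading this to the full $H^s$-norm on a bounded Lipschitz domain $\Omega$ — which is exactly where the lower-order term $\|u\|_{H^t}$ and the finite-dimensional polynomial structure of $\ker\mathbb{A}$ on $\Omega$ (again a consequence of $\mathbb{C}$-ellipticity) come in — is then carried out with the Nečas--Lions machinery; I would import this last step from the literature on $\mathbb{A}$-elliptic operators (Smith; Breit--Diening--Gmeineder and related works) and reproduce only what is needed.

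For the reverse direction, suppose $\mathbb{A}$ is not $\mathbb{C}$-elliptic, so there are $\xi_0\in\mathbb{C}^d\setminus\{0\}$ and $v_0\in\mathbb{C}^d\setminus\{0\}$ with $\mathbb{A}[\xi_0]v_0=0$. For $\lambda\in\mathbb{R}$ put $u_\lambda(x):=v_0\,e^{i\lambda\,\xi_0\cdot x}$; then $u_\lambda$ is smooth on $\overline{\Omega}$, hence in $H^s(\Omega)$, and $\mathbb{A}u_\lambda=i\lambda\,(\mathbb{A}[\xi_0]v_0)\,e^{i\lambda\,\xi_0\cdot x}=0$. Since $\xi_0\ne0$, the linear function $\xi_0\cdot x$ is non-constant on the open set $\Omega$, so the family $\{u_\lambda\}_{\lambda\in\mathbb{R}}$ is linearly independent and spans an infinite-dimensional subspace of $\ker\mathbb{A}\cap H^s(\Omega)$ (passing to real and imaginary parts if one insists on real-valued fields). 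Let $V_\infty$ be the $H^s$-closure of this span; it is an infinite-dimensional closed subspace still contained in $\ker\mathbb{A}$, because $\mathbb{A}\colon H^s\to H^{s-1}$ is bounded. On $V_\infty$ the assumed inequality reads $\|v\|_{H^s}\lesssim\|v\|_{H^t}$, which together with the trivial reverse bound makes $\|\cdot\|_{H^s}$ and $\|\cdot\|_{H^t}$ equivalent on $V_\infty$. But for $t<s$ the embedding $H^s(\Omega)\hookrightarrow H^t(\Omega)$ is compact on a bounded Lipschitz domain, so the identity on $V_\infty$ would be a compact operator that is bounded below, hence of finite rank — contradicting $\dim V_\infty=\infty$. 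Therefore $\mathbb{A}$ must be $\mathbb{C}$-elliptic.

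The step I expect to be the main obstacle is the localisation of the whole-space estimate onto $\Omega$ in the forward direction: one cannot simply extend $u$ to $\mathbb{R}^d$ and invoke the $\mathbb{R}^d$ inequality, since no extension keeps $\mathbb{A}u$ small outside $\Omega$ (a commutator obstruction). The remedy — running the Nečas--Lions argument after representing sufficiently high derivatives of $u$ through derivatives of $\mathbb{A}u$, or a kernel-aware extension built on the finite-dimensional polynomial kernel of $\mathbb{A}$ on $\Omega$ — is the genuinely technical part, which I would quote from the cited works rather than reprove in full.
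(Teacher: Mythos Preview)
Your proposal is correct and in fact goes well beyond what the paper does: the paper's own ``proof'' of \cref{thm:Korn2nd} is a one-line citation to \cite[Corollary~7.3]{smith1970formulas}, with no argument reproduced. Your sketch of the forward direction (Nullstellensatz $\to$ a polynomial left inverse $\mathbb{L}[\xi]\mathbb{A}[\xi]=|\xi|^{2N}\mathrm{Id}$ $\to$ multiplier estimates $\to$ Ne\v{c}as--Lions localisation) is precisely the line of reasoning in Smith's paper, and your compactness contradiction for the reverse direction is standard and correct. So there is no divergence in approach to report; you have simply unpacked the reference that the paper defers to.
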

\begin{proof}
    See \cite[Corollary 7.3]{smith1970formulas}.
\end{proof}

Finally, we present the proof for the generalized Korn's inequality of a negative norm.
\begin{theorem}
\label{thm:Korn_inequality_stfD_negative_norm}
For a bounded Lipschitz domain $\Omega\subset \mathbb R^3$
and
$\boldsymbol{v}\in L^2(\Omega;\mathbb R^3)$, the generalized Korn's inequality of negative norm holds:
\begin{align}
\|\boldsymbol{v}\|_0 \lesssim \|{\rm stf~\nabla}\boldsymbol{v}\|_{-1} + \|\boldsymbol{v}\|_{-1}.    
\end{align}
\end{theorem}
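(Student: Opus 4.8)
The plan is to obtain the estimate as an immediate consequence of the two structural results already assembled in this appendix: the $\mathbb{C}$-ellipticity of $\mathrm{stf}\,\nabla$ in three space dimensions (\cref{thm:stfD_3D_C_elliptic_1_tensor}) and the characterization of Korn-type inequalities through $\mathbb{C}$-ellipticity (\cref{thm:Korn2nd}). The operator $\mathbb{A} := \mathrm{stf}\,\nabla$, acting on $\mathbb{R}^3$-valued fields and taking values in $\mathbb{R}^{3\times 3}_{\mathrm{stf}} = \mathbb{T}$, is a first-order linear homogeneous differential operator with constant coefficients, so it falls within the scope of \cref{defn:ComplexEllip_3in1}.

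First I would apply \cref{thm:stfD_3D_C_elliptic_1_tensor} with $d = 3$ to conclude that $\mathrm{stf}\,\nabla$ is $\mathbb{C}$-elliptic on the bounded Lipschitz domain $\Omega \subset \mathbb{R}^3$. Then I would invoke \cref{thm:Korn2nd} with the particular choice of Sobolev indices $s = 0$ and $t = -1$, which respects the ordering $t < s$ required there. This directly yields
\begin{equation*}
\|\boldsymbol{v}\|_0 \lesssim \|\boldsymbol{v}\|_{-1} + \|{\rm stf~\nabla}\,\boldsymbol{v}\|_{-1}, \qquad \forall\, \boldsymbol{v} \in L^2(\Omega;\mathbb{R}^3),
\end{equation*}
which is precisely the claimed inequality. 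The right-hand side is finite since $\mathrm{stf}\,\nabla$ maps $L^2(\Omega;\mathbb{R}^3)$ boundedly into $H^{-1}(\Omega;\mathbb{T})$ (cf.\ \cref{lem:adjoint_div_grad}), so no density or regularization step beyond what \cref{thm:Korn2nd} already supplies is needed.

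Since the substantive work — establishing injectivity of the symbol map in the proof of \cref{thm:stfD_3D_C_elliptic_1_tensor} and the abstract Korn machinery underlying \cref{thm:Korn2nd} — has already been carried out, there is essentially no remaining obstacle; the proof is a short deduction. The only point deserving a sentence of care is confirming that \cref{thm:Korn2nd} is legitimately applied with the negative index $t = -1$ (and $s = 0$): this is permitted by the statement as given, which allows any pair $t < s$, the underlying result being insensitive to a shift in the Sobolev scale. I would also note in passing that the two-dimensional analogue of this inequality for $\mathrm{stf}\,\nabla$ fails — consistent with the exclusion of $d = 2$ in \cref{thm:stfD_3D_C_elliptic_1_tensor} — which is exactly why the $d = 2$ case in the proof of \cref{thm:inv_divergence_new_proof} was instead treated via the classical Korn inequality of negative norm for $\mathrm{sym}\,\nabla$ in \cref{eq:korn_neg_origin_2D}.
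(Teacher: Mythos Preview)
Your proposal is correct and follows essentially the same route as the paper: invoke \cref{thm:stfD_3D_C_elliptic_1_tensor} to obtain $\mathbb{C}$-ellipticity of $\mathrm{stf}\,\nabla$ in $d=3$, then apply \cref{thm:Korn2nd} with $s=0$, $t=-1$. Your additional remarks on finiteness of the right-hand side and on the failure in $d=2$ are accurate and go slightly beyond what the paper states, but the core argument is identical.
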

\begin{proof}
    The operator $P \coloneqq \mathrm{stf}\,\nabla$ is a first-order, linear homogeneous differential operator. As established in 
    \cref{thm:stfD_3D_C_elliptic_1_tensor}, $P$ is $\mathbb{C}$-elliptic for $d \ge 3$.
    Consequently, we can directly apply Korn-type inequality in \cref{thm:Korn2nd}. This corollary provides the following a priori estimate for $u\in H^s(\Omega)$:
    \begin{align}
        \|u\|_{s} \lesssim \|P u\|_{{s-1}} + \|u\|_{t},    
    \end{align}
    where $t < s$ is an arbitrary lower-order index. Setting $s=0$ and $t=-1$ yields the desired result. 
\end{proof}

\section*{Acknowledgments}
The authors would like to thank Ziwen Gu from Shanghai University of Finance and Economics for his valuable suggestions regarding conforming tensor-valued elements.

\bibliographystyle{siamplain}
\bibliography{references}
\end{document}


\maketitle

\section{Interpolation operator}
\label{sec:AppendixB}
In this section, we prove \cref{Lem:interp_op} for $d=2$. First, we derive the dual basis for $\Sigma_{h,0}$ with corresponding DoFs \eqref{eq:Sigmah_dofs}. For each element $K\subset\Omega$, we mark DoFs on element $K$ in \cref{fig:DoFs} and derive its dual basis.

\begin{figure}[!htbp]
\centering
\subfloat[Degrees of freedom on an element $K$.]
{
\includegraphics[width=0.3\textwidth]{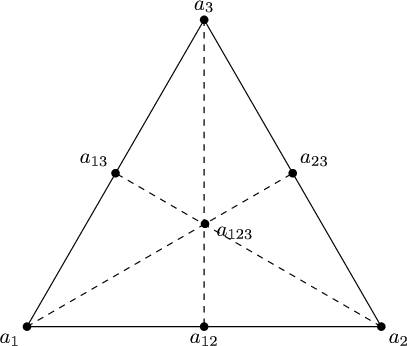}
\label{fig:DoFs}
} 
\subfloat[The patch of edge $f$.]
{
\includegraphics[width=0.3\textwidth]{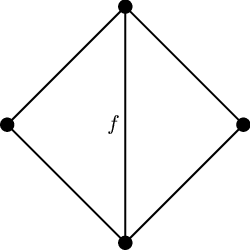}
\label{fig:patchf}
} 
\subfloat[The patch of node $z$.]
{
\includegraphics[width=0.3\textwidth]{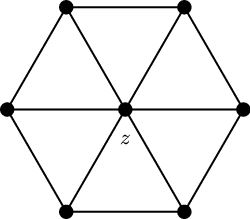}
\label{fig:patchz}
} 
\caption{Figures related to DoFs and dual basis.}
\label{fig:three_plots}
\end{figure}

Since $\tau_h\in\Sigma_{h}$ should be a symmetric $2\times 2$ tensor, the dual basis of $\Sigma_h$ shows the following form:
\begin{equation}
    \left\{\begin{bmatrix}
        u&0\\
        0&0\\
    \end{bmatrix},\quad\begin{bmatrix}
        0&u\\
        u&0\\
    \end{bmatrix},
    \quad
    \begin{bmatrix}
        0&0\\
        0&u\\
    \end{bmatrix}
    \right\},
\end{equation}
and the dual basis functions on each DoF:
\begin{itemize}
    \item On nodal points $a_i$, the basis functions:
    \begin{equation}
        \label{eq:basis_on_nodals}
        u_i=\lambda_i(3\lambda_i-2)+\lambda_1\lambda_2\lambda_3(24-42\lambda_i).
    \end{equation}
    \item On edge $a_{ij}$, the basis functions:
    \begin{equation}
        \label{eq:basis_on_edges}
        u_{ij}=\frac{6}{|a_{ij}|}(\lambda_i\lambda_j+\lambda_1\lambda_2\lambda_3(21\lambda_{k}-12)),
    \end{equation}
    where $k\neq i,j$.
    \item On the element $K$ (marked as $a_{123}$), the basis function (e.g., the first one):
    \begin{equation}
        \label{eq:basis_on_element}
        u_{123,1}=\frac{1}{|K|}\lambda_1\lambda_2\lambda_3(900\lambda_1-360\lambda_2-360\lambda_3),
    \end{equation}
    where $|K|$ denotes the area of element $K$.
\end{itemize}

From \eqref{eq:basis_on_nodals}, \eqref{eq:basis_on_edges} and \eqref{eq:basis_on_element}, we can construct the global ``hat-basis" for $\Sigma_h$. The \textbf{patch} of a node $z$ or an edge $f$, marked as $\mathcal{T}_{z}$ or $\mathcal{T}_{f}$, consists of all the elements $K$ which share common node $z$ (or edge $f$), see \cref{fig:patchf}, \cref{fig:patchz}. Then the entry of the global basis functions of node $z$, edge $f$ and element $K$ can write as
\begin{equation}
    \label{eq:global_basis}
    \phi_{z}:=\sum_{K\in\mathcal T_{z}}\chi_Ku_{K,z},
    \quad \tau_{f}:=\sum_{K\in\mathcal{T}_f}\chi_Ku_{K,f},
    \quad
    \varphi_{K,i}:=\chi_Ku_{K,i}\,(1\leq i \leq 3),
\end{equation}
respectively. Now, we show the proof for \cref{Lem:interp_op}.
\begin{proof}[Proof for \cref{Lem:interp_op}]
First, we show the operator $\mathcal{I}_h$ is a projection onto $\Sigma_h$. It suffices to show that $\mathcal{I}_h$ is the identity on $\Sigma_h$. By the definition, if $\tau\in\Sigma_h$, we have $Q_{K}\tau=\tau$ always holds on $K$. Then \eqref{eq:interp_operator} yields:
    \begin{equation}
        \label{eq:Dofs_of_Ih_tau}
        \begin{aligned}
        I_{h}\tau(\delta)&=\tau(\delta),\\
        \quad (\mathcal{I}_h\tau,\boldsymbol{q})_f&=(\tau,\boldsymbol{q})_f\quad\forall\boldsymbol{q}\in \mathbb{P}_0,\\
        (\mathcal{I}_h\tau,\boldsymbol{q})_{K}&=(\tau,\boldsymbol{q})_{K}\quad\forall\boldsymbol{q}\in\epsilon(\mathbb{P}_1(K;\mathbb{R}^2)),
        \end{aligned}
    \end{equation}
    since the DoFs in \eqref{eq:Dofs_of_Ih_tau} are identical with \eqref{eq:Sigmah_dofs}, it means that $\mathcal{I}_h\tau=\tau$ on $\Sigma_h$ and thus $\mathcal{I}_h^2=\mathcal{I}_h$, i.e., $\mathcal{I}_h$ is a projection. 

    Next, we will show the second result: $L^2$-estimation. We consider the estimation on the element $K\subset\Omega$. $\tilde{K}$ denotes the union of all mesh elements that intersect $K$, which is called the \textbf{first-order patch} of $K$. 

    \begin{figure}[htbp]
        \centering
        \includegraphics[width=0.3\linewidth]{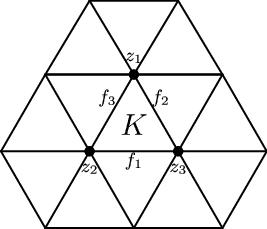}
        \caption{The first-order patch $\tilde{K}$ of element $K$.}
        \label{fig:1st_patch}
    \end{figure}
    
    By Poincaré's inequality, we can choose a constant $C_{\tilde{K}}:=\tfrac{1}{|\tilde{K}|}\int_{\tilde{K}}\tau$ such that 
    \begin{equation}
    \label{eq:Poincare}
        \|\tau-C_{\tilde{K}}\|_{0,\tilde{K}}\lesssim h|\tau|_{1,\tilde{K}}.
    \end{equation}
    Mark $w:=\tau-C_{\tilde{K}}$, since $I_{h}C_{\tilde{K}}=C_{\tilde{K}}$, we have:
    \begin{equation}
        \label{eq:L2err_of_K}
        \|\tau-I_{h}\tau\|_{0,K}\le\|\tau-C_{\tilde{K}}\|_{0,K}+\|\mathcal{I}_h(\tau-C_{\tilde{K}})\|_{0,K}
        =\|w\|_{0,K}+\|\mathcal{I}_hw\|_{0,K}.
    \end{equation}
    For $\|w\|_{0,K}$, $K\subset\tilde{K}$ implies that
    \begin{equation}
        \|w\|_{0,K}\le\|w\|_{0,\tilde{K}}\lesssim h|\tau|_{1,\tilde{K}}=h|w|_{1,\tilde{K}},
    \end{equation}
    where we use the Poincaré inequality with a sharp constant.
    
    For $\|\mathcal{I}_hw\|_0$, without loss of generality, we consider the expression for $\|\mathcal{I}_hw_{11}\|$. From \eqref{eq:global_basis}, write \begin{equation}
    \label{eq:Ih_with_basis}
    (\mathcal{I}_hw_{11})|_K=\sum_{i=1}^{3} a_i\phi_{z_i}+\sum_{i=1}^{3}b_i\tau_{f_i}+c\varphi_{K}.
    \end{equation}
    From \eqref{eq:interp_operator}, we have:
    \begin{equation}
    \label{eq:Coefficients_expansion}
        \begin{aligned}
            a_i&=\frac{1}{\# \mathcal{T}_{z_i}}\sum_{K\in\mathcal{T}_{z_i}}(Q_Kw_{11})(z_i),\\
            b_i&=\frac{1}{\#\mathcal{T}_{f_i}}\sum_{K\in\mathcal{T}_{f_i}}(Q_Kw_{11},1)_{0(f_i)},\\
            c&=(w_{11},1)_{K}.
        \end{aligned}
    \end{equation}
    Therefore:
    \begin{equation}
        \|\mathcal{I}_hw_{11}\|_{0,K}\le\sum_{i=1}^3|a_i|\|\phi_{z_i}\|_{0,K}+\sum_{i=1}^3|b_i|\|\tau_{f_i}\|_{0,K}+|c|\|\varphi_{K}\|_{0,K}:=J_1+J_2+J_3.
    \end{equation}
    By the norm equivalence on a finite-dimensional space and scaling argument, we have:
    \begin{equation}
        \label{eq:estimate_a_i}
        \begin{aligned}
            |a_i|&=\frac{1}{\#\mathcal{T}_{z_i}}\left|\sum_{K\in\mathcal{T}_{z_i}}(Q_{K}w_{11})(z_i)\right|\le\frac{1}{\#\mathcal{T}_{z_i}}\sum_{K\in\mathcal{T}_{z_i}}\|Q_{K}w_{11}\|_{L^{\infty}(K)}\\
            &\lesssim\frac{1}{\#\mathcal{T}_{z_i}}\sum_{K\in\mathcal{T}_{z_i}}h^{-1}\|Q_{K}w_{11}\|_{0,K}\lesssim h^{-1}\|Qw_{11}\|_{0,\mathcal{T}(z_i)}\\
            &\le h^{-1}\|Qw_{11}\|_{0,\tilde{K}}\le h^{-1}\|w_{11}\|_{0,\tilde{K}}\lesssim|\tau|_{1,\tilde{K}},
        \end{aligned}
    \end{equation}
    and 
    \begin{equation}
        \label{eq:estimate_phi_z_i}
        \|\phi_{z_i}\|_{0,K}=\|\lambda_i(3\lambda_i-2)+\lambda_1\lambda_2\lambda_3(24-42\lambda_i)\|_{0,K}\lesssim h,
    \end{equation}
    \eqref{eq:estimate_a_i} and \eqref{eq:estimate_phi_z_i} mean $J_1\lesssim h|\tau|_{1,\tilde{K}}$. 

    Second, by the discrete trace theorem and inverse inequality, we have:
    \begin{equation}
        \label{eq:estimate_b_i}
        \begin{aligned}
            |b_i|&=\frac{1}{\#\mathcal{T}_{f_i}}\left|\sum_{K\in\mathcal{T}_{f_i}}(Q_{K}w_{11},1)_{0(f_i)}\right|\lesssim\frac{1}{\#\mathcal{T}_{f_i}}\sum_{K\in\mathcal{T}_{f_i}}\|Q_{K}w_{11}\|_{0,f}\|1\|_{0,f}\\
            &\lesssim\sum_{K\in\mathcal{T}_{f_i}}(h^{-\frac{1}{2}}\|Q_{K}w_{11}\|_{0,K}+h^{\frac{1}{2}}\|Q_{K}w_{11}\|_{0,K})h^{\frac{1}{2}}\\
            &=\sum_{K\in\mathcal{T}_{f_i}}(\|Q_{K}w_{11}\|_{0,K}+h|Q_{K}w_{11}|_{1,K})\lesssim\sum_{K\in\mathcal{T}_{f_i}}\|Q_{K}w_{11}\|_{0,K}\\
            &\lesssim\|Qw_{11}\|_{0,\mathcal{T}_{f}}\lesssim\|Qw_{11}\|_{0,\tilde{K}}\le\|w_{11}\|_{0,\tilde{K}}\lesssim h|\tau|_{1,\tilde{K}},
        \end{aligned}
    \end{equation}
    and 
    \begin{equation}
        \label{eq:approx_psi_f_i}
        \|\tau_{z_iz_j}\|_{0,K}=\left\|\frac{6}{|z_iz_j|}(\lambda_i\lambda_j+\lambda_1\lambda_2\lambda_3(21\lambda_k-12))\right\|_{0,K}\lesssim 1.
    \end{equation}
    \eqref{eq:estimate_b_i} and \eqref{eq:approx_psi_f_i} mean $J_2\lesssim h|\tau|_{1,\tilde{K}}$.
    
    Third, by the Cauchy-Schwarz inequality,
    \begin{equation}
        \label{eq:estimate_c}
        |c|\lesssim\|w_{11}\|_{0,K}\|1\|_{0,K}\lesssim h\|w_{11}\|_{0,K}\lesssim h^2|\tau|_{1,K}\le h^2|\tau|_{1,\tilde{K}},
    \end{equation}
    and 
    \begin{equation}
        \label{eq:estimate_var_phi_K}
        \|\varphi_{K}\|_{0,K}=\left\|\frac{1}{|K|}\lambda_1\lambda_2\lambda_3(900\lambda_1-360\lambda_2-360\lambda_3)\right\|_{0,K}
        \lesssim h^{-1}.
    \end{equation}
    \eqref{eq:estimate_c} and \eqref{eq:estimate_var_phi_K} mean $J_3\lesssim h|\tau|_{1,\tilde{K}}$. Therefore:
    \begin{equation}
        \|I_{h}w_{11}\|_{0,K}\lesssim h|\tau|_{1,\tilde{K}}.
    \end{equation}
    Then we have:
    \begin{equation}
        \|\tau-\mathcal{I}_h\tau\|_0\le\sum_{K}(\|w\|_{K}+\|\mathcal{I}_hw\|_{K})\lesssim\sum_{K}h|\tau|_{1,\tilde{K}}\lesssim h|\tau|_{1,\Omega}.
    \end{equation}
    Estimating $|\mathcal{I}_hw|_{1}$ is nearly the same as $\|\mathcal{I}_h w\|_{0}$. The only differences are:
    \begin{equation}
        \begin{aligned}
            |\phi_{z_i}|_{1,K}\lesssim 1,\quad |\tau_{f_i}|_{1,K}\lesssim h^{-1},\quad |\varphi_{K}|_{1,K}\lesssim h^{-2}.
        \end{aligned}
    \end{equation}
    Then we can see $|\tau-\mathcal{I}_h\tau|_{1}\lesssim|\tau|_{1}$.

    For $\|I_{h}\tau\|_{1}$, since 
    \begin{equation}
        \|\mathcal{I}_h\tau-\tau\|_{1}\lesssim\|\mathcal{I}_h\tau-\tau\|_0+|\mathcal{I}_h\tau-\tau|_{1}\lesssim|\tau|_1,
    \end{equation}
    by Poincaré's inequality:
    \begin{equation}
        \|\mathcal{I}_h\tau\|_1\le\|\tau\|_1+\|\mathcal{I}_h\tau-\tau\|_1\lesssim\|\tau\|_1+|\tau|_1\lesssim|\tau|_1.
    \end{equation}

\end{proof}
